\documentclass[12pt,A4,leqno]{amsart}
\usepackage{amsfonts}
\usepackage{mathrsfs}
\usepackage[T1]{fontenc}
\usepackage{amssymb,amscd}
\usepackage{upgreek}
\usepackage{color}
\usepackage{tikz-cd}
\usepackage{dsfont}
\usepackage{geometry}
\usepackage{epsf}	
\usepackage{graphicx}
\usepackage{extarrows}
\usepackage[curve]{xypic}
\usepackage{hyperref} %adding hyperlink
\usepackage{url}
\hypersetup{colorlinks,linkcolor={blue},citecolor={blue},urlcolor={blue}} %urlcolor={blue} 
\usepackage[capitalise]{cleveref}

\theoremstyle{plain}
\newtheorem{thm}{Theorem}[section]
\newtheorem{lem}[thm]{Lemma}
\newtheorem{prop}[thm]{Proposition}
\newtheorem{cor}[thm]{Corollary}

\theoremstyle{definition}
\newtheorem{defi}[thm]{Definition}
\newtheorem{exam}[thm]{Example}
\newtheorem{rem}[thm]{Remark}

\newcommand{\R}{\mathbb R}
\newcommand{\Z}{\mathbb Z}

\newcommand{\nn}{\vskip 0.2cm}
\newcommand{\n}{\vskip 0.1cm}

\renewcommand{\geq}{\geqslant}
\renewcommand{\leq}{\leqslant}

\makeatletter
\renewcommand{\@secnumfont}{\bfseries}
\makeatother

\makeatletter
\def\section{%
  \@startsection{section}{1}
    {\z@}
    {2.0ex plus 0.8ex minus .1ex}
    {1.0ex plus .2ex}
    {\bfseries\large\centering\MakeUppercase}%
}
\makeatother

\begin{document}
\title{Weighted Homology and Cohomology of Weighted Polyhedra}
\author{*Yin Wei, $^\dagger$Lisu Wu, and $^\ddagger$Li Yu}
\address{*School of Mathematics, Nanjing University\\ Nanjing\\210093\\ P.R.China}
\email{2986343993@qq.com}
\address{$^\dagger$College of Mathematics and Systems Science, Shandong University of 
Science and Technology, Tsingtao, 266590, P.R.China}
\email{wulisu@sdust.edu.cn} 
\address{$^\ddagger$School of Mathematics, Nanjing University\\ Nanjing\\210093\\ P.R.China}
\email{yuli@nju.edu.cn}

\keywords{weighted homology, weighted polyhedron, orbifold, cup product}

\date{\today}

\thanks{2020 \textit{Mathematics Subject Classification}. 55N10, 55N91, 55N32, 57S17.
}

\begin{abstract}
 We define the notion of weighted polyhedron which can be thought of as the geometric realization of a weighted simplicial complex introduced by Dawson~\cite{Daw90}. Moreover, we will define a weighted version of singular homology theory for a weighted polyhedron and prove that it is
isomorphic to the weighted simplicial homology of the weighted polyhedron. This implies that weighted simplicial homology is an invariant
  under isomorphisms and more generally under certain type of homotopy equivalences of weighted polyhedra. Moreover, we will generalize the cup product and cap product to weighted singular cohomology. In addition, we will interpret some known theories of orbifolds
   in terms of our weighted singular homology and cohomology.
\end{abstract}

\maketitle

\section{Introduction}

Let $K$ be a simplicial complex. The geometric realization $|K|$ of $K$ is often referred to as a polyhedron (for example in Munkres~\cite{Munk84}). 
A (positive) \emph{weight} on $K$ is a function 
$w$ assigning a positive integer to each simplex of $K$, which satisfies: for any simplices $\sigma, \sigma'$ in $K$,
  \[ \sigma'\ \text{is a face of}\ \sigma\ \Rightarrow\ w(\sigma')\, |\, w(\sigma).  \]
 The pair $(K,w)$ is called a \emph{weighted simplicial complex}. This notion was first introduced by Dawson in~\cite{Daw90} where it was 
 used to construct nonstandard homology theories for categories of a combinatorial nature, such as preconvexity spaces.
 \n
 
  For each $n\geq 0$, let $C_n(K)$ denote the free abelian group generated by all the oriented $n$-simplices of $K$.
  By abuse of notation, we will use the symbol $\sigma$ to denote a simplex or an oriented simplex in different occasions.
  Then the weight function $w$ determines a weighted
  boundary operator $\partial^w$ on $C_*(K)$ by:
  \begin{equation} \label{Equ:weighted-Boun-AW}
  \partial^{w}: C_n(K)\rightarrow C_{n-1}(K), \ \  \sigma \mapsto \sum^n_{j=0} (-1)^j \frac{w(\sigma)}{w(\partial_j(\sigma))}  \partial_j \sigma 
   \end{equation}
  where the face operators $\partial_j$ are defined as in the standard simplicial homology, i.e.
 for an oriented simplex $\sigma=[v_0,\cdots,v_n]$, $\partial_j\sigma=[v_0,\cdots,\widehat{v}_j,\cdots,v_n]$. It is easy to check that $\partial^w\circ\partial^w=0$. 
  The chain complex $(C_*(K), \partial^w)$ is called the \emph{weighted simplicial chain complex} of $(K,w)$, whose
    homology group is called the \emph{weighted simplicial homology} of $(K,w)$, denoted by $H_*(K,\partial^w)$.\n
    
 Weighted simplicial homology did not receive much attention in the earlier days. But in the recent years, some interesting applications of weighted simplicial homology 
 were found in computational topology and topological data analysis; see Ren, Wu and Wu~\cite{RenWuWu18, RenWuWu21},
  Wu, Ren, Wu and Xia~\cite{WuRenWuXia20} and
Baccini, Geraci and Bianconi~\cite{BGB22}.\n

 In this paper,  we introduce a new type of weighted simplicial complexes which are parallel to Dawson's.

   \begin{defi}
 A \emph{descending type weighted simplicial complex}
 is a pair $(L, \overline{w})$ where $L$ is a simplicial complex and $\overline{w}$ is a function assigning a positive integer to each simplex of $L$, which satisfy: for any simplices $\sigma, \sigma'$ in $L$,
  \[ \sigma'\ \text{is a face of}\ \sigma \Rightarrow \overline{w}(\sigma)\, |\, \overline{w}(\sigma').  \]
  
  We call $\overline{w}$ a \emph{descending weight} on $L$. Similarly to~\eqref{Equ:weighted-Boun-AW},
  we define a weighted boundary operator $\partial^{\overline{w}}: C_n(L)\rightarrow C_{n-1}(L)$ for every $n\geq 0$ by:
  \begin{equation} \label{Equ:weighted-Boun-DW}
   \partial^{\overline{w}}: \sigma \longmapsto \sum^n_{j=0} (-1)^j \frac{\overline{w}(\partial_j(\sigma))}{\overline{w}(\sigma)}  \partial_j \sigma. 
   \end{equation}
  It is easy to show that $\partial^{\overline{w}}\circ\partial^{\overline{w}}=0$.  We call $(C_*(L), \partial^{\overline{w}})$ the
  \emph{weighted simplicial chain complex} of $(L,\overline{w})$, whose homology group is called the \emph{weighted simplicial homology group} of $(L,\overline{w})$, denoted by 
  $H_*(L,\partial^{\overline{w}})$.
 \end{defi}

  Descending type weighted simplicial complexes also arise naturally in geometry and topology. For example, any orbifold canonically determines some
     descending type weighted simplicial complexes (see Section~\ref{Sec:Relation} for the details).\n

 Since now there are two different types of weighted simplicial complexes, we use the following conventions in the rest of the paper.
\begin{itemize}
\item[\textbf{(CV-1)}] To avoid ambiguity, we call the weighted simplicial complex $(K,w)$ defined in Dawson~\cite{Daw90} an \emph{ascending type weighted simplicial complex} and call $w$ an \emph{ascending weight}.
\n

\item[\textbf{(CV-2)}] When we say $(K,\mu)$ is a weighted simplicial complex, $\mu$ could be either an ascending weight or a descending one. Moreover, we say that an ascending weight and a descending weight on $K$ are of \emph{opposite type}, so do we call the corresponding weighted simplicial complexes.\n

\item[\textbf{(CV-3)}] Let $(C_*(K,\mu),\partial^{\mu})$ denote the
 weighted simplicial chain complex of $(K,\mu)$.
 The weighted boundary map $\partial^{\mu}$ is defined by (see~\eqref{Equ:weighted-Boun-AW} and~\eqref{Equ:weighted-Boun-DW})
 \begin{align} \label{Equ:Boundary-Unified}
   \partial^{\mu}: C_n(K) \rightarrow C_{n-1}(K), \ 
   \sigma  \mapsto   \sum^n_{j=0} (-1)^j c^{\mu}_j(\sigma)   \partial_j \sigma 
\end{align}
where $c^{\mu}_j(\sigma) = \begin{cases}
\displaystyle \frac{\mu(\sigma)}{\mu(\partial_j(\sigma))},  &  \text{if $\mu$ is ascending}; \\
\displaystyle   \overset{\ \ }{\frac{\mu(\partial_j(\sigma))}{\mu(\sigma)}},  &  \text{if $\mu$ is descending}.
 \end{cases} 
  $\n
  
   \item[\textbf{(CV-4)}] If $(K,\mu)$ is a weighted simplicial complex and $L$ is a subcomplex of $K$, we also use $(L,\mu)$ to denote the weighted simplicial complex
  $(L,\mu|_L)$ and, use $H_*(L,\partial^{\mu})$
  instead of the cumbersome notation $H_*(L,\partial^{\mu|_L})$ to denote its weighted homology groups.  
 \end{itemize}
 \n
 
 To visualize a weighted simplicial complex
 $(K,\mu)$, it is natural to label
   each point in the \emph{geometric realization}
  $|K|$ of $K$ by some value defined by the weight $\mu$. 
  This leads to the following definitions. 
 
  \begin{defi}[Geometric Realization of Weighted Simplicial Complex] \label{Def:Realization-WSC}\ \n
  Any weighted simplicial complex $(K,\mu)$ determines
  a map $\lambda_{\mu}: |K| \rightarrow \Z_+$ by:
      \begin{equation} \label{Equ:lambda-mu-relation}
         \lambda_{\mu} (x) = \mu(\mathrm{Car}_K(x)), \ \forall  x\in |K|,
    \end{equation}
   where $\mathrm{Car}_K(x)$ is the unique simplex of $K$
   such that $|\mathrm{Car}_K(x)|$ contains $x$ in the relative interior.  
     We call $(|K|,\lambda_{\mu})$ the \emph{geometric realization} of $(K,\mu)$.
  \end{defi}

  \begin{defi}[Weighted Space]
    A \emph{weighted space} is a pair $(X,\lambda)$ where
  $X$ is a topological space and $\lambda: X\rightarrow \Z_+$.
 Two weighted spaces $(X,\lambda)$ and $(X',\lambda')$ are called \emph{isomorphic} if there exists a homeomorphism $f: X\rightarrow X'$ such that $f$ is weight-preserving, i.e. $\lambda'(f(x))=\lambda(x)$ for any $x\in X$.
  \end{defi}

    \begin{defi}[Weighted Polyhedron] \label{Def:Weighted-Polyhedron} 
    A weighted space $(X,\lambda)$ is called a 
     \emph{weighted polyhedron} if there exists a
     weighted simplicial complex $(K,\mu)$ such that $(X,\lambda)$ is isomorphic
    to $(|K|,\lambda_{\mu})$, and
    we call $(K,\mu)$  a \emph{weighted triangulation} of $(X,\lambda)$. 
   In addition, we say that a weighted polyhedron $(X,\lambda)$ is of
 \emph{ascending type} (or \emph{descending type}) 
    if its weighted triangulation is an ascending type (or descending type) weighted simplicial complex.
\end{defi}
  \n

 Unlike ordinary simplicial homology, a weighted simplex may have nontrivial weighted simplicial homology in dimension greater than $0$. But in many occasions, it is not convenient to have such kind of simplices in a weighted triangulation. 
 The following type of weighted simplicial complexes 
 were introduced in~\cite{Daw90} which plays an important role in our study.\n
  
\begin{defi}[Divisibly Weighted Simplicial Complex]
 A simplex $\sigma$ in a weighted simplicial complex $(K,\mu)$ is called \emph{divisibly weighted} if all the vertices of $\sigma$ can be ordered as
 $\{v_0,\cdots, v_k\}$ such that
   \begin{equation} \label{Equ:Weight-cond}
     \mu(v_0)\, |\, \cdots\, | \, \mu(v_k).
   \end{equation}
  If all the simplices in $(K,\mu)$ are divisibly weighted, 
  the weight function $\mu$ is called \emph{divisible}
  and $(K,\mu)$ is called a \emph{divisibly weighted simplicial complex}.  
  \end{defi}
   
  The above definition is defined for ascending type weights in~\cite{Daw90}, but it clearly makes sense for descending type weights as well.
 Notice that a divisible weight $\mu$ on $K$ is completely determined by its values on the vertices of $K$. Indeed, for a simplex $\sigma$ whose vertices are ordered
  as in~\eqref{Equ:Weight-cond},
  $$ \mu(\sigma)= \begin{cases}
   \mu(v_k),  &  \text{if $\mu$ is ascending}; \\
   \mu(v_0),  &  \text{if $\mu$ is descending}.
 \end{cases} $$

  We will prove that a weighted polyhedron
  $(X,\lambda)$ always
  admits a weighted triangulation $(K,\mu)$ where $\mu$
  is divisible (see Corollary~\ref{Cor:Div-Weight-Triangl}).  We call such
  $(K,\mu)$ a \emph{divisibly weighted triangulation} of $(X,\lambda)$. 
     
   \begin{defi}[Weighted simplicial homology] \label{Defi:Pseudo-Orbi-Homol} \ \n
  
  Suppose $(X,\lambda)$ is a weighted polyhedron and let 
   $(K,\mu)$ be a divisibly weighted triangulation of $(X,\lambda)$. We call $H_*(K,\partial^{\mu})$    
     the \emph{weighted simplicial homology} of $(X,\lambda)$, denoted by $H^W_*(X,\lambda)$.
 \end{defi}
 
The following theorem shows that the weighted simplicial homology of a weighted polyhedron is 
well-defined.
 
 \begin{thm} \label{Thm:Invariance}
   For any divisibly weighted triangulations $(K,\mu)$ and $(K',\mu')$ of a weighted polyhedron $(X,\lambda)$, their weighted simplicial homology $ H_*(K, \partial^{\mu})$  and $H_*(K', \partial^{\mu'})$ are isomorphic.
   \end{thm}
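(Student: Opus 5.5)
The approach follows the abstract: define a weighted singular homology $H^{\lambda}_*(X)$ that depends only on the weighted space $(X,\lambda)$. Then prove that for every divisibly weighted triangulation $(K,\mu)$ there is a natural isomorphism $H_*(K,\partial^{\mu})\cong H^{\lambda}_*(X)$. Theorem~\ref{Thm:Invariance} follows at once, because both $H_*(K,\partial^\mu)$ and $H_*(K',\partial^{\mu'})$ are isomorphic to the same group.

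\textbf{The singular theory.} A singular simplex $s:\Delta^n\to X$ is admissible if $\lambda\circ s$ is constant on the relative interior of each face of $\Delta^n$. It must also satisfy the divisibility relation required by the type of $\lambda$: in the ascending case the weight on a face divides the weight on the whole simplex, and in the descending case the divisibility is reversed. Assign to $s$ the weight $\lambda(s)$, namely the value of $\lambda$ on the interior of $\Delta^n$. The boundary is
\[
\partial^{\lambda}s=\sum_j(-1)^j c_j(s)\,\partial_j s,
\]
with $c_j(s)=\lambda(s)/\lambda(\partial_j s)$ or its inverse, according to the type. Then $\partial^\lambda\circ\partial^\lambda=0$ by the same computation as in the simplicial case.

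\textbf{The comparison map.} Order the vertices of each simplex compatibly with \eqref{Equ:Weight-cond}. Each ordered simplex of $K$ gives, through its characteristic map, an admissible singular simplex of the same weight. This yields a chain map $C_*(K,\partial^\mu)\to C^\lambda_*(X)$. It preserves coefficients because $\lambda_\mu$ is given by $\mu$ on open simplices.

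\textbf{Proving the map is an isomorphism.} I would imitate the classical argument. First, use a weighted Mayer--Vietoris sequence or a long exact sequence of the pair $(K^{(n)},K^{(n-1)})$, together with excision, and induct on skeleta. The five lemma then reduces everything to the case of a single simplex relative to its boundary. For that case, compute $H^\lambda_*(\Delta^n,\partial\Delta^n)$ by a weighted cone or prism construction. Divisibility is used here: the straight-line homotopy toward the vertex $v_k$ (ascending case) or $v_0$ (descending case) keeps simplices admissible, and the weight ratios compose consistently. Excision requires a weighted subdivision operator. Barycentric subdivision of an admissible simplex produces admissible simplices. The subdivision chain map must be twisted by weight ratios so that it commutes with $\partial^\lambda$, and the usual chain homotopy $T$ must be modified in the same way.

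\textbf{Main obstacle.} I expect the hardest step to be making subdivision and the cone construction compatible with the weight coefficients. The coefficient attached to each new simplex must be chosen so that $\partial^\lambda T+T\partial^\lambda=\mathrm{id}-\mathrm{Sd}$ still holds. My first choice would be to weight each subdivided simplex by the ratio $\lambda(\text{new simplex})/\lambda(s)$, or its inverse, and then check the identity face by face, using divisibility to guarantee that these ratios are integers. If this fails, a fallback proves the theorem combinatorially. Pass to a common subdivision of $K$ and $K'$, for which the existence argument behind Corollary~\ref{Cor:Div-Weight-Triangl} presumably gives a divisible weighting. Then show that a single stellar subdivision of a divisibly weighted complex induces an isomorphism on $H_*(-,\partial^\mu)$, by exhibiting an explicit weighted subdivision chain map and a cone-type chain homotopy on the star of the subdivided simplex.
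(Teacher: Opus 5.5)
\textbf{Verdict: genuine gap.} Your global strategy matches the paper's: build a singular theory intrinsic to $(X,\lambda)$ and show it is isomorphic to $H_*(K,\partial^{\mu})$ for every divisibly weighted triangulation. Your admissible simplices are weight-preserving maps carrying the pulled-back weight, which need not be divisible. They give a workable variant, close to the paper's $\mathcal{WP}_*$. Subdivision needs no twisting: every simplex of $Sd_{\#}(\theta)$ has the weight of $\theta$, and every simplex of $T_n(\theta)$ has weight $\xi(\Delta^n)$. So the untwisted formulas already give $\partial^{\lambda}T+T\partial^{\lambda}=\mathrm{id}-Sd_{\#}$, as in the paper's appendix.

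\textbf{Where it fails.} The gap is in the comparison step. Skeletal induction needs $H^{\lambda}_*(K^{(n)},K^{(n-1)})\cong\bigoplus_{\sigma}H^{\lambda}_*(\Delta^n,\partial\Delta^n)$. Without quotient spaces (Remark~\ref{Rem:Quotient-Not}), this requires invariance under the radial deformation of $K^{(n)}$ minus the barycenters onto $K^{(n-1)}$, or of some neighbourhood of $K^{(n-1)}$ onto it. Those deformations push points of weight $\mu(\sigma)$ onto faces of other weights, so they do not send admissible simplices to admissible ones. For example, take a $1$-simplex in an open $n$-simplex whose radial image passes from one facet to another through a face of different weight; the image is no longer constant-weight on its interior. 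Admissibility is preserved only by weight-preserving maps, and you establish no homotopy invariance. So this step is unsupported. The same applies to the Mayer--Vietoris computation of $H^{\lambda}_*(\partial\Delta^n)$ that you need to pass from acyclicity of $\Delta^n$ to the relative group.

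Separately, your cone vertex is swapped. In the ascending case, coning to the heaviest vertex $v_k$ gives $\partial^{\lambda}(v_k\cdot s)=\frac{\mu(v_k)}{\xi(s)}s-v_k\cdot\partial s$, which is not a contracting homotopy. You must cone to $v_0$ in the ascending case and to $v_k$ in the descending case, so that adding the apex leaves every face weight unchanged.

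\textbf{The missing idea.} Avoid weight-changing homotopies altogether, as the paper does. Cover $X$ by the open vertex stars and form a double complex. Its rows are the generalized Mayer--Vietoris sequences of $\mathcal{U}$-small chains, which are exact by the cosheaf argument. Its columns are the singular complexes of the open stars $\mathrm{St}(\sigma,K)$, augmented by $\eta$ to $C_k(K)$. The only local input is that each open star is acyclic, with $\eta$ identifying $H_0$ with $\mathbb{Z}\sigma$. This follows by coning to $b_{\sigma}$:
\begin{itemize}
\item the weight of $b_{\sigma}$ divides every weight in the star (ascending case) or is divisible by every weight in the star (descending case);
\item the segment from $b_{\sigma}$ to $x$, minus its endpoint $b_{\sigma}$, stays in the open simplex $\mathrm{Car}_K(x)$, so the cone preserves weights on open faces.
\end{itemize}
This argument works for your admissible simplices as well and closes the gap.

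\textbf{The fallback does not work.} $K$ and $K'$ are related only by a weight-preserving homeomorphism, which need not be PL. So a common subdivision need not exist, since the Hauptvermutung fails. That is exactly why a topologically invariant singular theory is needed in the first place.
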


   To prove this theorem, we introduce the weighted singular homology $\mathcal{H}^W_*(X,\lambda)$ of a weighted polyhedron $(X,\lambda)$,
   which is the natural generalization of singular homology to weighted spaces.
     We will prove in Theorem~\ref{Thm:Main-Isom} that $\mathcal{H}^W_*(X,\lambda)$ is isomorphic to $H_*(K,\partial^{\mu})$ for any divisibly weighted triangulation $(K,\mu)$ of $(X,\lambda)$, which then implies
     Theorem~\ref{Thm:Invariance}. In addition, there is another way to prove the well-definedness of weighted simplicial homology which was
      given by the authors in~\cite{WeiWuYu21}.\n

Moreover, we can define the cohomology theory associated to
the weighted simplicial homology. For a weighted simplicial complex  $(K,\mu)$ and an abelian group $G$, the \emph{weighted simplicial cochain complex} with $G$-coefficients is obtained by applying
  the $\mathrm{Hom}(-, G)$ functor to 
  the chain complex $(C_*(K),\partial^{\mu})$, denoted by $(C^*(K;G),\delta^{\mu})$,
  where $\delta^{\mu}$ is the coboundary map determined by
    $\partial^{\mu}$.
  Then the \emph{weighted simplicial cohomology group of $(K,\mu)$} with $G$-coefficients is
   $$ H^*(K,\delta^{\mu};G):= H^*((C^*(K;G),\delta^{\mu})).$$ 
   
  In particular, if $(K,\mu)$ be a divisibly weighted triangulation of a weighted polyhedron 
  $(X,\lambda)$, we call $H^*(K,\delta^{\mu};G)$ the \emph{weighted simplicial cohomology} of $(X,\lambda)$ with $G$-coefficients, denoted by $H^*_{W}(X,\lambda;G)$.
Similarly to Theorem~\ref{Thm:Invariance}, $H^*_{W}(X,\lambda;G)$ is independent on which divisibly weighted triangulation of $(X,\lambda)$ used in the definition.
In addition, we can use the universal coefficient theorem  (see~\cite[Theorem 3.2]{Hatcher02}) to compute the weighted simplicial cohomology from the weighted simplicial homology.  \n

Although the definitions of weighted simplicial cohomology for ascending type and descending type weighted polyhedra are parallel, 
 their properties are not.
 We will see in Section~\ref{Sec:Product-Cohomology} that there is a natural graded commutative
   product structure on weighted simplicial cohomology
   for descending type weighted polyhedra, but there is no such a product on ascending type weighted polyhedra.
  \n
 
 The primary examples of weighted polyhedra in our paper come from orbifolds. For an orbifold $\mathcal{M}=(M,\mathcal{U})$ where $\mathcal{U}$ is an orbifold atlas on $M$, define a weight function $\lambda_{\mathcal{M}}: M\rightarrow \Z_+$ by: for any $x\in M$,
  $$\lambda_{\mathcal{M}}(x)=\text{
 the order $|G_x|$ of the isotropy group $G_x$ of $x$}.$$ 
 It is natural to think of $(M,\lambda_{\mathcal{M}})$ as a descending type weighted polyhedron (see Proposition~\ref{Prop:Weight-Equal}). We will prove that the weighted singular homology and cohomology of $(M,\lambda_{\mathcal{M}})$ are equivalent to some known theories of orbifolds. \n

     The paper is organized as follows. In Section~\ref{Sec:Basic-Construc}, we review some basic facts and constructions related to weighted simplicial complexes. In Section~\ref{Sec:Bary-Subdiv}, we study the barycentric subdivision of a weighted simplicial complex.  In Section~\ref{Sec:Weigh-Singul-Homology}, we introduce   the weighted singular homology of a weighted polyhedron and 
     use it to prove Theorem~\ref{Thm:Invariance}. Our proof uses the generalized Mayer-Vietoris sequence and
     the \v{C}ech complex of a cosheaf. 
 In Section~\ref{Sec:Product-Cohomology}, we construct a natural graded commutative product on the weighted singular cohomology of a descending type weighted polyhedron, which generalizes the cup product in ordinary cohomology theory. Moreover, this leads to a weighted cap product between the weighted singular homology and weighted singular cohomology.  In Section~\ref{Sec:Relation}, we study weighted polyhedra that come from orbifolds and discuss the relations between the weighted singular homology and cohomology of orbifolds with some other known orbifold theories.
    In Section~\ref{Sec:Example}, we compute some examples to demonstrate the weighted singular (or simplicial) homology. In the appendix, we write a proof of a basic but important lemma that is used to show the isomorphism 
    between weighted singular homology and weighted simplicial homology.

 \vskip 0.4cm
 
 \section{Basic constructions of weighted simplicial complexes} \label{Sec:Basic-Construc}
 
  We review some basic constructions and facts of weighted simplicial complexes from~\cite{Daw90}, which make sense for weighted simplicial complexes 
  of both types.

  \subsection{Cartesian product of weighted simplicial complexes}
 \ \n
  
   Let $(K,\mu)$ and $(K',\mu')$ be two weighted simplicial complexes and let
  $$v_1 < \cdots < v_m, \ \ v'_1 < \cdots < v'_{n}$$
  be some total orderings of the vertices of $K$ and
  $K'$, respectively. Then the Cartesian product $K\times K'$ is a simplicial complex whose vertex set is 
  $$\{ (v_i,v'_j)\,|\, 1\leq i \leq m, 1\leq j \leq n \}.$$
  Moreover, all the simplices in $K\times K'$ are of the form  
  \begin{equation} \label{Equ:Product-Simplices}
    \{ (v_{i_1},v'_{j_1}),\cdots, (v_{i_s},v'_{j_s}) \},
  \ i_1 \leq \cdots \leq i_s,\ j_1 \leq \cdots \leq j_s,
  \end{equation}
  where $\{v_{i_1},\cdots, v_{i_s}\}$ is a simplex in $K$
  and $\{ v'_{j_1},\cdots, v'_{j_s} \}$ is a simplex in $K'$.
  \n
  
 \begin{defi}[Cartesian product of weighted simplicial complexes]\label{Def:Product-Wt-Complexes}
   Let $(K,\mu)$ and $(K',\mu')$ be weighted simplicial complexes of the same type. 
  The \emph{Cartesian product} of $(K,\mu)$ and $(K',\mu')$ with respect to some total orderings of the vertices of $K$ and $K'$
   is a weighted simplicial complex $(K\times K',\mu\times \mu')$, where for a simplex
  $\{(v_{i_1},v'_{j_1}),\cdots, (v_{i_s},v'_{j_s})\}$ of
  $K\times K'$,
  \begin{equation} \label{Equ:Product-Weight}
   \mu\times \mu'\big( \{(v_{i_1},v'_{j_1}),\cdots, (v_{i_s},v'_{j_s})\} \big):= \mu(\{v_{i_1},\cdots, v_{i_s}\})\cdot \mu'(\{v'_{j_1},\cdots, v'_{j_s}\}).  
   \end{equation}
   It is easy to see that
  $(K\times K',\mu\times \mu')$ has the same type as $(K,\mu)$ and $(K',\mu')$.  
  \end{defi}
  
  Notice that the simplicial complex structure of $K\times K'$ depends on the ordering of vertices of $K$ and $K'$, so does $(K\times K',\mu\times \mu')$. But 
  we usually omit the ordering of vertices in our notation. 
  
  \begin{rem}
   The Cartesian product of weighted simplicial complexes defined here is a little different from the product of weighted simplicial complexes defined  in~\cite{Daw90} (see~\cite[Proposition 1.1]{Daw90}).
   \end{rem}

 \subsection{Morphisms between weighted simplicial complexes}\label{subsection-morphism}
 \ \n
 Let $(K,\mu)$ and $(K',\mu')$ be weighted simplicial complexes of the same type. A simplicial map $\varrho: K\rightarrow K'$ is called a \emph{morphism} from $(K,\mu)$ to $(K',\mu')$ if for every simplex $\sigma$ of $K$, we have
   \begin{equation} \label{Equ:Morphism}
    \begin{cases}
   \mu'(\varrho(\sigma)) \mid \mu(\sigma),  &  \text{if $\mu$ and $\mu'$ are ascending}; \\
   \mu(\sigma) \mid  \mu'(\varrho(\sigma)),  &   \text{if $\mu$ and $\mu'$ are descending}.
 \end{cases} 
   \end{equation} 
  For brevity, we write a morphism from $(K,\mu)$ to $(K',\mu')$ as $\varrho: (K,\mu)\rightarrow (K',\mu')$.
    We call $\varrho$ \emph{weight-preserving} if $\mu'(\varrho(\sigma))=\mu(\sigma)$ for any simplex $\sigma$ in $K$. Moreover, we call
     $\varrho$ an \emph{isomorphism} if $\varrho$ is a weight-preserving simplicial homeomorphism. 
      It is easy to check that a morphism  $\varrho: (K,\mu)\rightarrow (K',\mu')$ induces a chain map 
   \[ \varrho_{\#}:
   (C_*(K),\partial^{\mu})\rightarrow (C_*(K'), \partial^{\mu'}),  \]
   \[ \text{where} \ \varrho_{\#}(\sigma)= \begin{cases}
 \displaystyle \frac{\mu(\sigma)}{\mu'(\varrho(\sigma))} \varrho(\sigma),  &  \text{if $\mu$ and $\mu'$ are ascending}; \\
 \displaystyle  \overset{\ \ }{\frac{\mu'(\varrho(\sigma))}{\mu(\sigma)}} \varrho(\sigma) ,  &  \text{if $\mu$ and $\mu'$ are descending}.
 \end{cases}  \]
  Then $\varrho$ further induces a homomorphism on the weighted simplicial homology
  $$ \varrho_*:  H_*(K,\partial^{\mu})\rightarrow H_*(K',\partial^{\mu'}).$$
  
  From the above definitions, we obtain the following lemma immediately for divisibly weighted simplicial complexes.
   
    \begin{lem} \label{Lem:Morph-Vertex}
       Suppose $(K,\mu)$ and $(K',\mu')$ are two divisibly weighted simplicial complexes of the same type.  Then
   a simplicial map $\varphi: K\rightarrow K'$ is a morphism from $(K,\mu)$ to $(K',\mu')$ if and only if
   for every vertex $v$ of $K$,
   $$ \begin{cases}
   \mu'(\varphi(v)) \mid \mu(v),  &  \text{if $\mu$ and $\mu'$ are ascending}; \\
   \mu(v) \mid \mu'(\varphi(v)),  &  \text{if $\mu$ and $\mu'$ are descending}.
 \end{cases} $$    
 In particular, $\varphi$ is weight-preserving if and only if
 $\mu'(\varphi(v))=\mu(v)$ for every vertex $v$ of $K$.
   \end{lem}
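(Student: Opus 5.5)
The plan is to reduce both directions to a statement about a single simplex $\sigma$ of $K$, using the fact that a divisible weight on a simplex is determined by its extreme vertex. The key observation is that in a divisibly weighted complex the weight of a simplex equals the weight of one of its vertices. Fix $\sigma$ and order its vertices $v_0,\dots,v_k$ with $\mu(v_0)\mid\cdots\mid\mu(v_k)$. Then $\mu(\sigma)=\mu(v_k)$ in the ascending case and $\mu(\sigma)=\mu(v_0)$ in the descending case. Moreover, since every face of $\sigma$ divides $\sigma$ in the ascending case (respectively, is divided by it in the descending case), $\mu(\sigma)$ is the lcm (respectively, gcd) of the vertex weights of $\sigma$. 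The image $\varphi(\sigma)$ is a simplex of $K'$ spanned by the vertices $\varphi(v_0),\dots,\varphi(v_k)$, possibly with repetitions. Because $K'$ is divisible, $\mu'(\varphi(\sigma))$ is the lcm (respectively, gcd) of the weights $\mu'(\varphi(v_i))$, and in fact equals one of them.

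For the \emph{only if} direction there is nothing to prove beyond specializing: apply \eqref{Equ:Morphism} to the $0$-simplex $\sigma=\{v\}$, noting that $\varphi(\{v\})=\{\varphi(v)\}$.

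For the \emph{if} direction in the ascending case, assume $\mu'(\varphi(v_i))\mid\mu(v_i)$ for every $i$. Since $\mu(v_i)\mid\mu(v_k)=\mu(\sigma)$, each $\mu'(\varphi(v_i))$ divides $\mu(\sigma)$. Hence their lcm, which is $\mu'(\varphi(\sigma))$, divides $\mu(\sigma)$, and this is exactly \eqref{Equ:Morphism}. The descending case is dual: $\mu(\sigma)=\mu(v_0)$ divides each $\mu(v_i)$, which divides $\mu'(\varphi(v_i))$. Therefore $\mu(\sigma)$ divides their gcd, which is $\mu'(\varphi(\sigma))$.

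For the weight-preserving statement, equality on vertices gives $\mu'(\varphi(\sigma))=\operatorname{lcm}_i\mu'(\varphi(v_i))=\operatorname{lcm}_i\mu(v_i)=\mu(\sigma)$, with gcd in place of lcm in the descending case. The converse is again just the case of $0$-simplices.

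The only step requiring care is justifying that $\mu'(\varphi(\sigma))$ is the lcm/gcd of the vertex weights of $\varphi(\sigma)$ even when $\varphi$ collapses vertices. This follows by applying the divisibility ordering to the simplex $\varphi(\sigma)$ of $K'$ itself, whose vertex set is $\{\varphi(v_i)\}$. I expect no real obstacle here; the argument is a direct unwinding of the definitions.
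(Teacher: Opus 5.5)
Your proof is correct and is the same definition-unwinding the paper has in mind. The paper gives no separate argument and calls the lemma immediate, relying on its remark that a divisible weight is determined by its vertex values, namely $\mu(\sigma)=\mu(v_k)$ in the ascending case and $\mu(\sigma)=\mu(v_0)$ in the descending case, which you apply to both $\sigma$ and $\varphi(\sigma)$. This remark belongs to the intended meaning of ``divisibly weighted'' rather than following from the literal vertex-ordering condition, and the lemma would fail without it, so you are right to take it as given.
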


The following notion was defined in~\cite[p.\,236]{Daw90} for ascending type weighted simplicial complexes. But  the parallel notion clearly makes sense for descending type weighted simplicial complexes as well.

  \begin{defi}[Contiguous Morphisms] \label{Def:Contigu}
  Suppose $(K,\mu)$ and $(K',\mu')$ are two weighted simplicial complexes of the same type. Two morphisms $$\varrho_0,\varrho_1: (K,\mu)\rightarrow (K',\mu')$$ are called \emph{contiguous}, denoted by $\varrho_0 \underset{c}{\simeq} \varrho_1$,  
  if there exists a morphism
  $$ F: (K \times [0, 1], \mu \times \mathbf{1}) \rightarrow (K',\mu')\  \text{with}\
  F(x, 0) =\varrho_0(x),\, F(x,1)= \varrho_1(x),\ \forall x\in K.$$
   Here $([0,1],\mathbf{1})$ is a weighted simplicial complex where the weight of every 
   simplex of $[0,1]$ is $1$, and $(K\times [0,1],\mu\times \mathbf{1})$ is the Cartesian product $(K,\mu)$ with
   $([0,1],\mathbf{1})$ (see Definition~\ref{Def:Product-Wt-Complexes}). 
   Moreover, if $\varrho_0$, $\varrho_1$ and $F$ are all weight-preserving, we say that 
   $\varrho_0$ and $\varrho_1$ are \emph{strongly contiguous}.
  \end{defi}

  \begin{thm}[\text{\cite[Theorem 2.5]{Daw90}}] \label{Thm:Contig}
   If two morphisms $\varrho_0,\varrho_1: (K,\mu)\rightarrow (K',\mu')$ of weighted simplicial complexes 
   are contiguous, then the chain maps $(\varrho_0)_{\#}$
   and $(\varrho_1)_{\#}$ are chain homotopic and hence  $ (\varrho_0)_*= (\varrho_1)_*:  H_*(K,\partial^{\mu})\rightarrow H_*(K',\partial^{\mu'})$. 
  \end{thm}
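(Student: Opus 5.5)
The plan is to imitate the standard prism-operator argument for contiguous simplicial maps and check that the weight coefficients survive. I fix the total ordering of the vertices of $K$ used to build $K\times[0,1]$, and write $[0,1]$ with vertices $0<1$. For an oriented $n$-simplex $\sigma=[v_0,\dots,v_n]$ of $K$, with vertices listed in increasing order, the product $\sigma\times[0,1]$ is triangulated by the $(n+1)$-simplices
\[ \tau_i=[(v_0,0),\dots,(v_i,0),(v_i,1),\dots,(v_n,1)],\quad 0\le i\le n. \]
Each projects onto $\sigma$ in the $K$-factor and onto all of $[0,1]$ in the second factor. Hence, by \eqref{Equ:Product-Weight}, $(\mu\times\mathbf 1)(\tau_i)=\mu(\sigma)$.

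I then define $P:C_n(K)\to C_{n+1}(K\times[0,1])$ by $P(\sigma)=\sum_i(-1)^i\tau_i$, and verify
\[ \partial^{\mu\times\mathbf 1}P+P\partial^{\mu}=(i_1)_\#-(i_0)_\#, \]
where $i_0,i_1:(K,\mu)\to(K\times[0,1],\mu\times\mathbf 1)$ are the weight-preserving inclusions at the ends. This is the main computation. The faces of $\tau_i$ fall into three kinds.

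\textbf{Interior faces.} These are the faces that drop $(v_i,0)$ from $\tau_i$ or $(v_{i-1},1)$ from $\tau_{i-1}$. Both give the same simplex, and it still projects onto $\sigma\times[0,1]$, so its weight is $\mu(\sigma)$ and the coefficient $c_j$ equals $1$ in both cases. These faces therefore cancel in pairs with signs exactly as in the unweighted proof.

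\textbf{End faces.} These are $\sigma\times\{0\}$ (from $\tau_n$) and $\sigma\times\{1\}$ (from $\tau_0$). Again the weight is $\mu(\sigma)$ and the coefficient is $1$, producing $(i_1)_\#\sigma-(i_0)_\#\sigma$.

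\textbf{Side faces.} The remaining faces are of the form $\partial_j\sigma$-prisms. Such a face projects onto $\partial_j\sigma$, so its weight is $\mu(\partial_j\sigma)$, and the coefficient in $\partial^{\mu\times\mathbf 1}\tau_i$ is exactly $c_j^\mu(\sigma)$. This matches the term $c^\mu_j(\sigma)P(\partial_j\sigma)$ in $P\partial^\mu\sigma$, which cancels it with the usual signs.

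Thus $P$ is a chain homotopy between $(i_0)_\#$ and $(i_1)_\#$, whatever the type of the weight. The key point is that every coefficient factor is a ratio of weights of the $K$-projections, and these agree with those appearing in $\partial^\mu$.

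Next I compose with $F$. Since $F$ is a morphism, $F_\#$ is a chain map, and $F\circ i_k=\varrho_k$ as simplicial maps. Because $i_k$ is weight-preserving, $(F\circ i_k)_\#=F_\#\circ(i_k)_\#$: the coefficient of a composite is the product of the coefficients, and the coefficient of $i_k$ is $1$. Therefore $D=F_\#\circ P$ satisfies
\[ \partial^{\mu'}D+D\partial^\mu=(\varrho_1)_\#-(\varrho_0)_\#, \]
and the statement on homology follows.

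The main obstacle is degeneracy. When $F$ collapses a simplex, $F(\tau)$ has lower dimension, and then $F_\#$ must be defined to send it to $0$, as is standard for simplicial chain maps. I would check that $F_\#$ is still a chain map in that case. When $F(\tau)$ is degenerate, the two faces of $\tau$ mapping onto the same image carry equal weight coefficients, because both images coincide. So they cancel just as in the unweighted setting, and the rest of the argument goes through unchanged.
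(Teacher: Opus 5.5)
Your proposal is correct. The paper does not prove this statement itself: it quotes Dawson's Theorem~2.5 and only remarks that the same proof works for descending weights.

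Your prism-operator argument is a self-contained replacement that covers both weight types at once. The reason it does is the point you isolate: every coefficient in $\partial^{\mu\times\mathbf 1}P(\sigma)$ is either $1$ or $c^{\mu}_j(\sigma)$, a ratio of weights of $K$-projections. This is exactly what appears in $P\partial^{\mu}\sigma$, whichever type $\mu$ is. Your construction is the simplicial counterpart of the prism operator $P_n$, built from the simplices $[v_0,\cdots,v_i,w_i,\cdots,w_n]$, which the paper uses to prove W-homotopy invariance of $\mathcal{H}^W_*$.

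One wording point concerns your degenerate-case check. Suppose exactly two vertices $w_a,w_b$ of $\tau$ have the same image, and put $\rho=F(\partial_a\tau)=F(\partial_b\tau)$. The individual coefficients $c_a(\tau)$ and $c_b(\tau)$ need not be equal. What agrees is the composite coefficient of $\rho$ contributed by each of the two faces to $F_{\#}(\partial^{\mu\times\mathbf 1}\tau)$. It telescopes to $(\mu\times\mathbf 1)(\tau)/\mu'(\rho)$ in the ascending case, or to its inverse in the descending case, so the two terms cancel with the usual signs.

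The paper takes the chain-map property of $\varrho_{\#}$ for granted, so this check adds rigor rather than filling a gap.
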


  \begin{prop}[\text{\cite[Theorem 3.1]{Daw90}}] \label{Prop:Wt-Div-Simplex}
  Let $\Delta^n$ be the standard $n$-simplex which is considered as a simplicial complex with respect to its facial structure. If $\mu$ is a divisible weight on $\Delta^n$, then
   $$
   H_j(\Delta^n,\partial^{\mu}) \cong \begin{cases}
   \Z ,  &  \text{if $j=0$}; \\
   0 ,  &  \text{if $j \geq 1$}.
 \end{cases}
 $$
  \end{prop}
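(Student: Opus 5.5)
Order the vertices of $\Delta^n$ as $v_0,\dots,v_n$ so that $\mu(v_0)\mid\mu(v_1)\mid\cdots\mid\mu(v_n)$, which divisibility of the top simplex allows. Every face is then divisibly weighted with the induced order. In the ascending case $\mu(\sigma)=\mu(\text{last vertex of }\sigma)$; in the descending case $\mu(\sigma)=\mu(\text{first vertex of }\sigma)$. The plan is to build an explicit cone-type contracting homotopy on the augmented weighted chain complex. The cone point is $v_0$ in the ascending case (smallest weight) and $v_n$ in the descending case (largest weight). This choice guarantees that adjoining the cone point to a face does not change its weight, at least whenever the face does not already contain it. Alternatively one could use a contiguity argument via Theorem~\ref{Thm:Contig}, but the direct chain homotopy seems cleaner.

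\emph{Ascending case.} Write oriented simplices in increasing order, $\sigma=[v_{i_0},\dots,v_{i_k}]$ with $i_0<\cdots<i_k$. Define $D\sigma=[v_0,v_{i_0},\dots,v_{i_k}]$ if $i_0>0$, and $D\sigma=0$ if $i_0=0$. Since $\mu(v_0)$ divides everything, $\mu(D\sigma)=\mu(v_{i_k})=\mu(\sigma)$ when $k\ge0$. In $\partial^\mu D\sigma$ the $j=0$ term has coefficient $\mu(D\sigma)/\mu(\sigma)=1$, which gives $\sigma$. The other terms are $-D(\text{faces})$, with the same coefficients as in $\partial^\mu\sigma$, because $\mu(D\tau)=\mu(\tau)$ for each face $\tau$ of $\sigma$ with $\dim\tau\ge0$. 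Hence $\partial^\mu D+D\partial^\mu=\mathrm{id}$ on simplices with $i_0>0$ and $k\ge1$.

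For $\sigma$ containing $v_0$ with $k\ge1$, we have $D\sigma=0$, and $D\partial^\mu\sigma$ picks up only the face $\partial_0\sigma$. Its coefficient is $\mu(\sigma)/\mu(\partial_0\sigma)=1$, since the last vertex is unchanged when $k\ge1$. So $D\partial^\mu\sigma=D\partial_0\sigma=\sigma$, and the identity holds again.

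\emph{Degree $0$.} This step needs care, because weights enter $\partial^\mu$ on edges. We have $\partial^\mu[v_0,v_i]=\frac{\mu(v_i)}{\mu(v_i)}v_i-\frac{\mu(v_i)}{\mu(v_0)}v_0$. Thus $v_i\equiv\frac{\mu(v_i)}{\mu(v_0)}v_0$ in homology, and the map $\epsilon(v_i)=\mu(v_i)/\mu(v_0)$ is an augmentation killing boundaries. It is surjective onto $\Z$ since $\epsilon(v_0)=1$. Its kernel consists of boundaries, because $v_i-\epsilon(v_i)v_0=\partial^\mu D v_i$. Therefore $H_0\cong\Z$.

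\emph{Descending case.} This is symmetric, using the cone point $v_n$ appended at the end. Here $\mu(\tau\cup v_n)=\mu(\text{first vertex of }\tau)=\mu(\tau)$ for nonempty $\tau$, so the face coefficients agree again. The last face $\partial_{k+1}(\tau*v_n)=\tau$ carries coefficient $\mu(\tau)/\mu(\tau*v_n)=1$ and sign $(-1)^{k+1}$. One therefore uses $D\tau=(-1)^{k+1}[\tau,v_n]$ to fix the signs. The augmentation in degree $0$ becomes $\epsilon(v_i)=\mu(v_n)/\mu(v_i)$.

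The main obstacle is the bookkeeping of the weight coefficients in the two boundary cases: faces containing the cone point, and the passage from dimension $1$ to dimension $0$. In dimension $0$ the weight of a single vertex differs from that of the cone edge, so the plain contraction must be replaced by the weighted augmentation above. Everything else is the classical cone argument. The key observation is that the cone vertex is extremal in the divisibility chain, so cone-adjoining is weight-preserving on all faces of positive dimension.
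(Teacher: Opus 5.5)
Your proof is correct. It is also more self-contained than the paper's treatment: the paper gives no proof of this proposition, citing Dawson's Theorem~3.1 for ascending weights and asserting that the same proof works for descending ones.

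\emph{What your argument establishes.} The cone operator $D$ from the extremal vertex, together with the weighted augmentation $\epsilon$, is an explicit contracting homotopy of the augmented complex $\epsilon\colon C_*(\Delta^n)\to\Z$, with $1\mapsto v_0$ (resp.\ $v_n$) in degree $-1$. The identity $\partial^{\mu}D+D\partial^{\mu}=\mathrm{id}$ does hold in all degrees $\geq 1$, including on simplices that contain the apex, and the degree-$0$ step is right. This is the simplicial counterpart of two arguments the paper uses later:
\begin{itemize}
\item the cone from a vertex $p$ of minimal weight in the exactness lemma for $\mathcal{WP}_*$, which gives $\partial^{\lambda_{\mu}}(p\cdot\alpha)=\alpha-p\cdot\partial^{\lambda_{\mu}}\alpha$;
\item the degree-zero step of Lemma~\ref{Lem:Retract-Barycenter}, where $\eta$ plays the role of your $\epsilon$.
\end{itemize}

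\emph{Descending case.} This needs no separate computation. By Lemma~\ref{Lem:Adjoint-Weight}, $\mu^*=N_\mu/\mu$ gives literally the same boundary operator, $\partial^{\mu^*}=\partial^{\mu}$. Moreover $\mu^*$ is a divisible ascending weight whose least-weight vertex is $v_n$. Your operator $D\tau=(-1)^{k+1}[\tau,v_n]$ is just the oriented simplex $[v_n,\tau]$, i.e.\ the ascending cone operator for $\mu^*$.

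\emph{Where divisibility enters.} The only input is that $\mu(\sigma)$ equals the weight of the extremal vertex of $\sigma$. This is the convention the paper states right after the definition, and it is genuinely needed. For example, an ascending edge with vertex weights $1,1$ and edge weight $2$ satisfies the bare vertex-ordering condition, yet has $H_0\cong\Z\oplus\Z/2\Z$.

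\emph{A small correction to your closing heuristic.} It misplaces the degree-$0$ issue. Adjoining the apex also preserves the weight of a vertex: $\mu([v_0,v_i])=\mu(v_i)$. What breaks is that the remaining face $v_0$ of $[v_0,v_i]$ enters $\partial^{\mu}$ with coefficient $\mu(v_i)/\mu(v_0)$ rather than $1$. That is exactly why the augmentation has to be weighted.
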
  
 Note that if a weight function $\mu$ on $\Delta^n$ is not divisible, it is well possible that $H_j(\Delta^n,\partial^{\mu}) \neq 0$ when $j >0$.\n
  
   The proofs of Theorem~\ref{Thm:Contig} and Proposition~\ref{Prop:Wt-Div-Simplex} in~\cite{Daw90} are for ascending type weighted simplicial complexes. 
   But the same proofs clearly work for descending
   type ones.
    The reader is referred to~\cite{Daw90} for more discussion of the categorical properties of weighted simplicial complexes.
 \n
 
 \subsection{Ascending type vs. descending type weighted simplicial complex}
 \ \n
 
 For a weighted simplicial complex $(K,\mu)$, let
 \begin{equation}\label{Equ:N-mu}
   N_{\mu} := \text{the least common multiple of all the weights $\mu(\sigma)$}, \ \sigma\in K. 
   \end{equation}
   
 \begin{lem} \label{Lem:Adjoint-Weight}
 For a weighted simplicial complex $(K,\mu)$ where $N_{\mu}$ is finite, there exists a  weighted simplicial complex $(K,\mu^*)$
 of opposite type such that
 \begin{equation} \label{Equ:Homology-Adjoint-Equal}
   H_i(K,\partial^{\mu^*}) \cong H_i(K,\partial^{\mu}) \ \text{for all}\ i \geq 0.
   \end{equation}
  \end{lem}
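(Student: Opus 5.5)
Define $\mu^*(\sigma) := N_{\mu}/\mu(\sigma)$ for every simplex $\sigma$ of $K$. Since $N_{\mu}$ is finite and each $\mu(\sigma)$ divides $N_{\mu}$, this is a positive integer.

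\textbf{Step 1: $\mu^*$ has the opposite type.} If $\sigma'$ is a face of $\sigma$ and $\mu$ is ascending, then $\mu(\sigma')\mid\mu(\sigma)$. Write $\mu(\sigma)=a\,\mu(\sigma')$. Then $\mu^*(\sigma') = a\,\mu^*(\sigma)$, so $\mu^*(\sigma)\mid \mu^*(\sigma')$, and $\mu^*$ is descending. The descending case is symmetric.

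\textbf{Step 2: compare the coefficients.} For an oriented $n$-simplex $\sigma$, suppose $\mu$ is ascending, so $\mu^*$ is descending. Then
\[
c^{\mu^*}_j(\sigma)=\frac{\mu^*(\partial_j\sigma)}{\mu^*(\sigma)}=\frac{N_\mu/\mu(\partial_j\sigma)}{N_\mu/\mu(\sigma)}=\frac{\mu(\sigma)}{\mu(\partial_j\sigma)}=c^{\mu}_j(\sigma).
\]
The same computation works when $\mu$ is descending. Hence $\partial^{\mu^*}=\partial^{\mu}$ as maps $C_n(K)\to C_{n-1}(K)$, since both are given by \eqref{Equ:Boundary-Unified} with identical coefficients.

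\textbf{Step 3: conclude.} The two chain complexes $(C_*(K),\partial^{\mu^*})$ and $(C_*(K),\partial^{\mu})$ are literally equal. Therefore their homology groups coincide, which gives \eqref{Equ:Homology-Adjoint-Equal}.

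There is no real obstacle here. The only point needing care is that the finiteness of $N_\mu$ is exactly what makes $\mu^*$ integer-valued. One could also note that $(\mu^*)^* = \mu$ when $N_{\mu^*}=N_\mu$, but this is not needed for the statement.
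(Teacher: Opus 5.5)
Your proposal is correct and follows essentially the same route as the paper: the paper also defines $\mu^*(\sigma)=N_{\mu}/\mu(\sigma)$ and observes that the chain complexes $(C_*(K),\partial^{\mu})$ and $(C_*(K),\partial^{\mu^*})$ are isomorphic. You add the explicit verification that $\mu^*$ has the opposite type and that the coefficients $c^{\mu^*}_j(\sigma)$ and $c^{\mu}_j(\sigma)$ coincide, which shows the two boundary maps are literally equal.
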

  \begin{proof}
 Define a weight $\mu^*$ of opposite type to $\mu$ on $K$ by 
\begin{equation} \label{Equ:Adjoint-Weight}
 \mu^*(\sigma) = \frac{N_{\mu}}{\mu(\sigma)}, \ \forall \sigma\in K.
 \end{equation}
 
 By the definitions of $\partial^{\mu}$ and $\partial^{\mu^*}$, $(C_*(K),\partial^{\mu})$ and $(C_*(K),\partial^{\mu^*})$ are in fact isomorphic chain complexes. So
 $H_*(K,\partial^{\mu^*}) \cong H_*(K,\partial^{\mu})$. 
  \end{proof}
 
  By Lemma~\ref{Lem:Adjoint-Weight}, we can consider descending type weighted simplicial complexes as the twin objects of ascending type ones. So many properties
 of ascending type weighted simplicial complexes proved in~\cite{Daw90} (such as Mayer-Vietoris sequence and excision) can be parallelly proved for descending type ones.  On the other hand,  not all properties of these two categories of objects are the same (see Section~\ref{Sec:Product-Cohomology} for more discussion).
 
\subsection{Notations of simplicial complexes}
 \ \n
 
   Let $K$ be a simplicial complex. We fix some notations for our discussions in the rest of the paper. 
 \begin{itemize}
  \item For any $n\geq 0$, denote by $K^{(n)}$ the $n$-skeleton of $K$. \n

   \item  Let $|K| \subseteq \R^N$ denote a \emph{geometric realization} of $K$ where
  each simplex $\sigma\in K$ determines a geometric simplex $|\sigma| \subseteq |K|$. So
   $ |K| = \bigcup_{\sigma\in K} |\sigma|$.\n

    \item For any simplex $\sigma$ of $K$, let $V(\sigma)$ denote the vertex set of $\sigma$ and let
      \begin{align*}
       \mathrm{star}_K(\sigma) &= \{ \tau\in K \,|\, \tau\cup\sigma\in K\} \\
         \mathrm{link}_K(\sigma) &= \{ \tau\in K \,|\, \tau\cup\sigma\in K, \tau\cap\sigma=\varnothing\}.
      \end{align*}
      
   \item  The \emph{open star} of a simplex $\sigma$ in $|K|$ is 
              $$ \mathrm{St}(\sigma,K) = |\mathrm{star}_K(\sigma)| \backslash |\mathrm{link}_K(\sigma)| \subseteq |K|.  $$ 
  \end{itemize}
 
\vskip 0.4cm

\section{Barycentric subdivision of Weighted Simplicial Complex} \label{Sec:Bary-Subdiv}

    For a simplicial complex $K$, let $Sd(K)$ denote the \emph{barycentric subdivision} of $K$ (see~\cite[\S 17]{Munk84}). By definition, $Sd(K)$ is a simplicial complex where any simplex in $Sd(K)$ is of the form $[b_{\sigma_0},\cdots, b_{\sigma_l}]$ where
   $\sigma_0\subsetneq \cdots\subsetneq \sigma_l \in K$
   and $b_{\sigma_i}$ is the barycenter of $\sigma_i$ for each $0\leq i \leq l$. We associate a sign
   $\varepsilon(\sigma_0,\cdots,\sigma_l) \in \{ 1, -1\}$
   to each $[b_{\sigma_0},\cdots, b_{\sigma_l}]$ as follows.   
   Suppose the vertices of $\sigma$ are ordered as $v_0,\cdots, v_n$. If 
  $  V(\sigma_s) = \{ v_{i_0},\cdots, v_{i_{k_s}} \}$ 
  with $i_0 < \cdots < i_{k_s}$ for each
  $ 0\leq s \leq l$, define
  \begin{equation} \label{Equ:Ver-Sequ}
    V(\sigma_0), V(\sigma_1)  - V(\sigma_{0}),\cdots,
   V(\sigma_l)  - V(\sigma_{l-1})  
   \end{equation}
   where the vertices in $V(\sigma_0)$ and each $V(\sigma_s) - V(\sigma_{s-1})$  are ordered from left to right according to the given order of $V(\sigma)$. So~\eqref{Equ:Ver-Sequ} determines a sequence of vertices  which is a permutation 
    of $(v_{i_0},\cdots, v_{i_{k_l}})$. Let
  $I(\sigma_0,\cdots,\sigma_l)$ denote
   the \emph{inversion number} of the sequence in~\eqref{Equ:Ver-Sequ}, i.e.
   $I(\sigma_0,\cdots,\sigma_l)$ counts the number of pairs of vertices $(v_{i}, v_{i'})$ in the sequence~\eqref{Equ:Ver-Sequ} with $i> i'$. Then define
   $$\varepsilon(\sigma_0,\cdots,\sigma_l) := (-1)^{I(\sigma_0,\cdots,\sigma_l)}.  $$
 
    Let $Sd_{\#} : C_*(K)\rightarrow C_*(Sd(K))$ denote the  chain map induced by $Sd$, which
  sends an oriented $n$-simplex $\sigma$ to the sum of all the $n$-simplices in $Sd(\sigma)$ with the same orientation. More specifically,
   \begin{equation} \label{Equ:Bary-Subdiv-Form}
   Sd_{\#}(\sigma)= \sum_{\sigma_0\subsetneq \cdots\subsetneq \sigma_n = \sigma} 
 \varepsilon(\sigma_0,\cdots,\sigma_n)\cdot [b_{\sigma_0}\cdots b_{\sigma_n}]. 
 \end{equation}
 We have $Sd_{\#}\circ\partial = 
  \partial\circ Sd_{\#}$. In addition, we can also define $Sd_{\#}$ inductively by
   \begin{equation} \label{Equ:Sd-simple}
    Sd_{\#}(\sigma)=b_{\sigma}\cdot Sd_{\#}(\partial \sigma),\ \text{and}\ Sd_{\#}(v)=v, \, \forall v\in K^{(0)},
    \end{equation}    
  where $b_{\sigma}$ is the barycenter of $\sigma$ and
  $b_{\sigma}\cdot \tau$ is the cone of $\tau$ over $b_{\sigma}$.    
 \n
   
    Moreover, for any $k$-face $\tau$ of an $n$-simplex $\sigma$, define 
   \begin{equation} \label{Equ:Sd-sigma-tau}
    Sd_{\#}(\sigma,\tau):= 
    \sum_{\tau=\sigma_k\subsetneq \cdots\subsetneq \sigma_{n}= \sigma} 
 \varepsilon(\sigma_k,\cdots,\sigma_{n})\cdot [b_{\sigma_k}\cdots b_{\sigma_{n}}].
 \end{equation}
 Indeed, $Sd_{\#}(\sigma,\tau)$ consists of all the $n-k$ simplices in $Sd(\sigma)$ which intersect $\tau$ at a single point $b_{\tau}$. This definition is useful in
 the proof of Theorem~\ref{Thm:Isom-Char}.\n
   
   We observe that for any weighted simplicial complex $(K,\mu)$, there is a natural divisible weight on $Sd(K)$ defined below.
     \n
     
 \begin{defi}[Barycentric Subdivision of Weighted Simplicial Complex] \label{Def:Bary-Subdiv-Weighted}
\ \n
 For a weighted simplicial complex $(K,\mu)$, define the weight of the barycenter $b_{\sigma}$ of any simplex $\sigma$ in $K$ to be $\mu(\sigma)$. Then define a weight  $Sd(\mu)$ on $Sd(K)$ by:
    \begin{equation} \label{Equ:Subdiv-w}
     Sd(\mu)\big([ b_{\sigma_0},\cdots, b_{\sigma_l}]\big)=\mu(\sigma_l),\ \sigma_0\subsetneq \cdots\subsetneq \sigma_l \in K.
     \end{equation}
   
   Note that we have either
   $\mu(\sigma_0)\, | \, \cdots \, | \,\mu(\sigma_l)$ or 
    $\mu(\sigma_l) \, |\, \cdots\, |\, \mu(\sigma_0)$ depending on whether $\mu$ is ascending or descending. Then it is easy to 
    check that $Sd(\mu)$ is a divisible weight on $Sd(K)$ which is ascending (or descending) if so is $\mu$.  
     We call $(Sd(K),Sd(\mu))$ the \emph{barycentric subdivision} of $(K,\mu)$. For any integer $m\geq 1$, let $(Sd^m(K),Sd^m(\mu))$ 
 denote the $m$ iterated barycentric subdivisions of $(K,\mu)$.
  \end{defi}

If $\sigma$ is an $n$-simplex of $K$, every $n$-simplex $\tau$ in $Sd(\sigma)$ is of the form $[b_{\sigma_0},\cdots, b_{\sigma_n}]$ where
$\sigma_0\subsetneq \cdots\subsetneq \sigma_n=\sigma$.
Then by the definition of $Sd(\mu)$, we have
\begin{equation} \label{Equ-Sd-n-simplex}
 Sd(\mu)(\tau) = \mu(\sigma).
 \end{equation}
 So in~\eqref{Equ:Bary-Subdiv-Form}, $Sd_{\#}(\sigma)$ consists of $n$-simplices with the same weight as $\sigma$.

\begin{lem} \label{Lem:Barycen-Sub-Iso-Weight}
 For any weighted simplicial complex $(K,\mu)$, the
 weighted spaces $(|K|,\lambda_{\mu})$ and
 $\big(|Sd(K)|, \lambda_{Sd(\mu)}\big)$ are isomorphic.
\end{lem}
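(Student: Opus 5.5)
We use the identity map: the standard homeomorphism $|Sd(K)| \to |K|$ is the identity of the underlying point set, since each simplex $|[b_{\sigma_0},\cdots,b_{\sigma_l}]|$ lies inside $|\sigma_l|$ and these simplices subdivide $|K|$ (\cite[\S 17]{Munk84}). So it suffices to show that for every $x\in|K|$,
$$\lambda_{Sd(\mu)}(x)=\lambda_{\mu}(x),\quad\text{i.e.}\quad Sd(\mu)\big(\mathrm{Car}_{Sd(K)}(x)\big)=\mu\big(\mathrm{Car}_K(x)\big).$$

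Fix $x$ and let $\tau=\mathrm{Car}_{Sd(K)}(x)=[b_{\sigma_0},\cdots,b_{\sigma_l}]$ with $\sigma_0\subsetneq\cdots\subsetneq\sigma_l$. By \eqref{Equ:Subdiv-w}, $Sd(\mu)(\tau)=\mu(\sigma_l)$, so the claim reduces to showing $\mathrm{Car}_K(x)=\sigma_l$. Since $x$ lies in the relative interior of $|\tau|$, we can write
$$x=\sum_{i=0}^l t_i b_{\sigma_i}$$
with all $t_i>0$ and $\sum_i t_i=1$. Each barycenter $b_{\sigma_i}$ is a convex combination of the vertices of $\sigma_i$ with strictly positive coefficients, and every vertex of $\sigma_l$ occurs in $b_{\sigma_l}$ with a positive coefficient. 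Expanding, the barycentric coordinates of $x$ with respect to the vertices of $\sigma_l$ are therefore all strictly positive, and $x$ has zero coordinates at all other vertices of $K$. Hence $x$ lies in the relative interior of $|\sigma_l|$, so $\mathrm{Car}_K(x)=\sigma_l$ by uniqueness of the carrier.

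This gives $\lambda_{Sd(\mu)}(x)=\mu(\sigma_l)=\lambda_\mu(x)$ for all $x$, so the identity homeomorphism is weight-preserving and the two weighted spaces are isomorphic. The only substantive step is the barycentric-coordinate check identifying $\mathrm{Car}_K(x)$ with the largest simplex $\sigma_l$ in the flag; the rest is direct from the definitions.
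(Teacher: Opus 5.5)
Your proof is correct and follows essentially the same route as the paper: the identity map is the homeomorphism, and each point's $Sd(K)$-carrier $[b_{\sigma_0},\cdots,b_{\sigma_l}]$ has top simplex $\sigma_l=\mathrm{Car}_K(x)$, so $Sd(\mu)$ and $\mu$ give the same weight by \eqref{Equ:Subdiv-w}. The paper simply asserts that $|\sigma|^{\circ}$ is the union of the relative interiors of the simplices of $Sd(K)$ with $\sigma_l=\sigma$, while you verify this carrier identification explicitly with barycentric coordinates.
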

\begin{proof}
 For any simplex $\sigma$ of $K$, we can write
 the relative interior $|\sigma|^{\circ}$ of $|\sigma|$ as
 $$ |\sigma|^{\circ} =  \underset{\sigma_0\subsetneq \cdots\subsetneq \sigma_l =\sigma}{\bigcup_{[b_{\sigma_0}\cdots b_{\sigma_l}]\in Sd(K)}} |[b_{\sigma_0}\cdots b_{\sigma_l}]|^{\circ}. $$
 
 For any point $x\in |\sigma|^{\circ}$, it follows from Definition~\ref{Def:Realization-WSC} of $\lambda_{\mu}$ that
$$\lambda_{\mu}(x) =  \mu(\sigma) = \lambda_{\mu}(b_{\sigma}). $$

 Meanwhile, $x$ must lie in the relative interior 
 of some simplex $[ b_{\sigma_0}\cdots b_{\sigma_l}]$
 of $Sd(K)$
 with $\sigma_0\subsetneq \cdots\subsetneq \sigma_l =\sigma$. So by the definition of $Sd(\mu)$ (see~\eqref{Equ:Subdiv-w}), 
 $$  \lambda_{Sd(\mu)} (x) = Sd(\mu)\big([b_{\sigma_0}\cdots b_{\sigma_l}]\big) = \mu(\sigma_l) = \mu(\sigma). $$ 
 So the two weighted spaces $(|K|,\lambda_{\mu})$ and
 $\big(|Sd(K)|, \lambda_{Sd(\mu)}\big)$ are isomorphic.
\end{proof}

By Definition~\ref{Def:Bary-Subdiv-Weighted}, the barycentric subdivision of a weighted simplicial complex
 is always a divisibly weighted simplicial complex. So we obtain the follow corollary immediately from Lemma~\ref{Lem:Barycen-Sub-Iso-Weight}.
 
\begin{cor}\label{Cor:Div-Weight-Triangl}
 Any weighted polyhedron
   has a divisibly weighted triangulation. 
\end{cor}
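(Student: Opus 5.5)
The plan is to combine the definition of a weighted polyhedron with Lemma~\ref{Lem:Barycen-Sub-Iso-Weight} and the observation recorded in Definition~\ref{Def:Bary-Subdiv-Weighted}.

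Let $(X,\lambda)$ be a weighted polyhedron. By Definition~\ref{Def:Weighted-Polyhedron}, there is a weighted simplicial complex $(K,\mu)$ of some type, with no divisibility assumed, and a weight-preserving homeomorphism $f\colon X\to |K|$. In other words, $\lambda_{\mu}(f(x))=\lambda(x)$ for every $x\in X$.

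Next, pass to the barycentric subdivision $(Sd(K),Sd(\mu))$. By Lemma~\ref{Lem:Barycen-Sub-Iso-Weight}, there is an isomorphism of weighted spaces $g\colon(|K|,\lambda_{\mu})\to(|Sd(K)|,\lambda_{Sd(\mu)})$. In fact $g$ can be taken to be the identity of the underlying space, since $|Sd(K)|=|K|$.

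The composite $g\circ f$ is then a homeomorphism $X\to|Sd(K)|$. It is weight-preserving because it is a composite of weight-preserving maps. Hence $(Sd(K),Sd(\mu))$ is a weighted triangulation of $(X,\lambda)$. It has the same type as $\mu$, so the type of the weighted polyhedron is unchanged.

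It remains to check that $Sd(\mu)$ is divisible. A simplex $[b_{\sigma_0},\dots,b_{\sigma_l}]$ of $Sd(K)$ with $\sigma_0\subsetneq\cdots\subsetneq\sigma_l$ has vertex weights $Sd(\mu)(b_{\sigma_i})=\mu(\sigma_i)$. Because $\mu$ is a weight, each consecutive pair in this chain is related by divisibility: $\mu(\sigma_i)\mid\mu(\sigma_{i+1})$ in the ascending case, and $\mu(\sigma_{i+1})\mid\mu(\sigma_i)$ in the descending case. So the vertices can be ordered to form a divisibility chain, in reverse order in the descending case.

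I also need the weight on the whole simplex to match the divisible-weight formula. By \eqref{Equ:Subdiv-w} it is $\mu(\sigma_l)$, which is the maximal vertex weight in the ascending case and the minimal one in the descending case, as required. So $Sd(\mu)$ is a divisible weight and is a legitimate weight function.

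There is no real obstacle. The only point needing care is this last consistency check, and it follows directly from the chain condition on $\sigma_0\subsetneq\cdots\subsetneq\sigma_l$.
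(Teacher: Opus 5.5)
Your proposal is correct and follows the paper's own argument: pass to the weighted barycentric subdivision $(Sd(K),Sd(\mu))$, which is divisibly weighted by construction (Definition~\ref{Def:Bary-Subdiv-Weighted}). Then compose the given isomorphism $X\cong|K|$ with the weight-preserving identification $|K|=|Sd(K)|$ of Lemma~\ref{Lem:Barycen-Sub-Iso-Weight}; your explicit check that each simplex $[b_{\sigma_0},\dots,b_{\sigma_l}]$ carries the extremal vertex weight $\mu(\sigma_l)$ spells out what the paper calls ``easy to check''.
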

 \n

 \begin{lem} \label{Lem:Sd-Weight-Chain}
   $Sd_{\#}: \big(C_*(K),\partial^{\mu} \big) \rightarrow 
   \big(C_*(Sd(K)),\partial^{Sd(\mu)} \big)$ is a chain map. 
  \end{lem}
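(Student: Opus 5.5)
The plan is to reduce the statement to the ordinary identity $Sd_{\#}\circ\partial=\partial\circ Sd_{\#}$ (recalled in Section~\ref{Sec:Bary-Subdiv}), and to show that the weights only rescale the part of $\partial Sd_{\#}(\sigma)$ lying on $|\partial\sigma|$. Since the weights are constant on the part lying in the interior of $|\sigma|$, that part still cancels.

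Fix an oriented $n$-simplex $\sigma$ of $K$. By \eqref{Equ:Bary-Subdiv-Form}, $Sd_{\#}(\sigma)$ is a signed sum of simplices $\tau=[b_{\sigma_0}\cdots b_{\sigma_n}]$ with $\sigma_0\subsetneq\cdots\subsetneq\sigma_n=\sigma$. By \eqref{Equ-Sd-n-simplex}, each such $\tau$ has weight $\mu(\sigma)$.

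First I compute $\partial^{Sd(\mu)}\tau$ using \eqref{Equ:Subdiv-w}.
\begin{itemize}
\item For $j<n$, the face $\partial_j\tau$ omits $b_{\sigma_j}$ but still contains $b_{\sigma}$. Its top simplex is still $\sigma$, so its weight is $\mu(\sigma)$ and $c^{Sd(\mu)}_j(\tau)=1$.
\item For $j=n$, the face $\partial_n\tau=[b_{\sigma_0}\cdots b_{\sigma_{n-1}}]$ has weight $\mu(\sigma_{n-1})$. Here $\sigma_{n-1}=\partial_i\sigma$ for a unique $i$, so $c^{Sd(\mu)}_n(\tau)=c^{\mu}_i(\sigma)$. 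This holds for both the ascending and the descending case by \eqref{Equ:Boundary-Unified}.
\end{itemize}
Consequently
\[
\partial^{Sd(\mu)}Sd_{\#}(\sigma)=A+B,
\]
where $A=\sum_\tau\varepsilon\sum_{j<n}(-1)^j\partial_j\tau$ is exactly the "interior part" of the ordinary boundary $\partial Sd_{\#}(\sigma)$. The term $B$ is obtained from the "boundary part" $\sum_\tau\varepsilon(-1)^n\partial_n\tau$ by multiplying each term with $\sigma_{n-1}=\partial_i\sigma$ by $c^\mu_i(\sigma)$.

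Next I decompose the ordinary identity
\[
\partial Sd_{\#}(\sigma)=Sd_{\#}(\partial\sigma)=\sum_i(-1)^iSd_{\#}(\partial_i\sigma)
\]
according to support. The simplices appearing in $A$ have $b_\sigma$ as a vertex, so they do not lie in $|\partial\sigma|$. The simplices appearing in the boundary part, and on the right-hand side, all lie in $|\partial\sigma|$. Comparing the two sides in the free group $C_{n-1}(Sd(K))$ then gives two facts:
\begin{itemize}
\item $A=0$;
\item for each $i$, the boundary-part terms with $\sigma_{n-1}=\partial_i\sigma$ sum to $(-1)^iSd_{\#}(\partial_i\sigma)$, since distinct $i$ give disjoint sets of simplices.
\end{itemize}
Therefore
\[
B=\sum_i(-1)^ic^\mu_i(\sigma)\,Sd_{\#}(\partial_i\sigma)=Sd_{\#}(\partial^\mu\sigma),
\]
which proves the lemma.

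The main point needing care is the support-splitting step: one must check that the boundary-part simplices with different $\sigma_{n-1}$ are distinct simplices, so that the ordinary identity can be read off face by face. This is clear because $\partial_n\tau$ determines $\sigma_{n-1}$ as its top simplex. The sign bookkeeping via $\varepsilon$ is then inherited from the unweighted case, so no new sign computation is needed.
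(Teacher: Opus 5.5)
Your proof is correct and is essentially the paper's argument. Both rest on the same observation: among the faces of an $n$-simplex of $Sd(\sigma)$, only those lying in $Sd(\partial_j\sigma)$ change weight, acquiring the factor $c^{\mu}_j(\sigma)$, while the faces through $b_{\sigma}$ keep weight $\mu(\sigma)$ and cancel exactly as in the unweighted case. The paper packages this through the cone recursion $Sd_{\#}(\sigma)=b_{\sigma}\cdot Sd_{\#}(\partial\sigma)$ and kills the interior terms as $b_{\sigma}\cdot Sd_{\#}(\partial\partial\sigma)=0$; you instead use the flag expansion \eqref{Equ:Bary-Subdiv-Form} and obtain the same cancellation, and the facet-by-facet identification, by splitting the unweighted identity $\partial Sd_{\#}(\sigma)=Sd_{\#}(\partial\sigma)$ according to support.
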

  \begin{proof}
  For an oriented $n$-simplex $\sigma$ of $K$, 
     \begin{align*}
    \partial^{Sd(\mu)}\circ Sd_{\#}(\sigma) 
   & \overset{\eqref{Equ:Sd-simple}}{=} \partial^{Sd(\mu)}\big( b_{\sigma}\cdot 
    Sd_{\#}(\partial \sigma ) \big)
    = \partial^{Sd(\mu)}\Big( b_{\sigma}\cdot 
    \sum^n_{j=0} (-1)^j   Sd_{\#}(\partial_j\sigma)  \Big) \\
    &=   \sum^n_{j=0} (-1)^j  \partial^{Sd(\mu)} \big( b_{\sigma}\cdot  Sd_{\#}(\partial_j\sigma)  \big) \\
    &\overset{\divideontimes}{=} 
    \sum^n_{j=0} (-1)^j \Big( c^{\mu}_j(\sigma) Sd_{\#}(\partial_j\sigma) - b_{\sigma}\cdot \partial \,
    Sd_{\#}(\partial_j \sigma )  \Big) \\ 
     (\text{by induction})  &= Sd_{\#} \Big( \sum^n_{j=0} (-1)^j c^{\mu}_j(\sigma) \partial_j\sigma \Big) - b_{\sigma}\cdot  Sd_{\#}(\partial\partial\sigma ) \overset{\eqref{Equ:Boundary-Unified}}{=}
    Sd_{\#}\circ \partial^{\mu} \sigma.
   \end{align*}
  The equality $\overset{\divideontimes}{=}$ follows from the facts that for any $(n-1)$-simplex $\tau$ in $Sd(\partial_j \sigma)$:\n
  
  $\bullet$ $Sd(\mu)(\tau)= \mu(\partial_j\sigma)$ by~\eqref{Equ-Sd-n-simplex} and $Sd(\mu)(b_{\sigma}\cdot \tau)=\mu(\sigma)$ by~\eqref{Equ:Subdiv-w}, which together give the coefficient $c^{\mu}_j(\sigma)$ defined in~\eqref{Equ:Boundary-Unified};\n
  
 $\bullet$ for each $(n-2)$-face $\kappa$ of $\tau$, $Sd(\mu)(b_{\sigma}\cdot \kappa) =\mu(\sigma)$ by~\eqref{Equ:Subdiv-w}.
 \end{proof}  
  
  \vskip 0.4cm

\section{Weighted singular homology of weighted polyhedron} \label{Sec:Weigh-Singul-Homology}

 In this section, we first define some basic notions on weighted polyhedra. Then  we introduce the weighted singular homology of a weighted polyhedron which is a natural generalization of singular homology to weighted spaces. 
 Moreover, we will prove that the weighted singular homology
 of a weighted polyhedron $(X,\lambda)$ is
 isomorphic to the weighted simplicial homology of 
 any divisibly weighted triangulation of $(X,\lambda)$.
 \n
 
 \subsection{Basic definitions on weighted polyhedron}
 \ \n

\begin{defi}[W-continuous map] \label{Def:W-cont-map}
   Suppose $(X,\lambda)$ and $(X',\lambda')$ are
   weighted polyhedra of the same type. 
   A \emph{W-continuous map} from $(X,\lambda)$ to 
   $(X',\lambda')$ is a continuous map $f: X\rightarrow X'$   which satisfies: for all $x\in X$,
   $$ \begin{cases}
   \lambda'(f(x)) \mid \lambda(x),  &  \text{if  $(X,\lambda)$ and $(X',\lambda')$ are of ascending type}; \\
   \lambda(x) \mid \lambda'(f(x)),  &  \text{if  $(X,\lambda)$ and $(X',\lambda')$ are of descending type}.
 \end{cases}  
   $$  
    We also denote such a map by $f: (X,\lambda)\rightarrow (X',\lambda')$ to indicate its W-continuity
    since the definition of $f$ depends on the type of the weighted polyhedra involved.
   Moreover, $f$ is called \emph{weight-preserving} if 
$\lambda'(f(x))=\lambda(x)$ for all $x\in X$.\n

We say that $(X,\lambda)$ is \emph{isomorphic} to $(X',\lambda')$ if there exists a weight-preserving homeomorphism from $(X,\lambda)$ to $(X',\lambda')$.  
 \end{defi}

  \begin{exam}[Geometric Realization of a Morphism] \label{Exam:Geo-Realiztion}
    If $\varrho: (K,\mu)\rightarrow (K',\mu')$ is a
   morphism between two weighted simplicial complexes of the same type, then the induced map on the weighted polyhedra, denoted by $|\varrho|: (|K|,\lambda_{\mu})\rightarrow
 (|K'|,\lambda_{\mu'})$, is W-continuous. We call $|\varrho|$ the \emph{geometric realization} of $\varrho$.  
 In particular, $|\varrho|$ is weight-preserving if so is $\varrho$. 
 \end{exam}
 
  \begin{itemize}
   \item[\textbf{(CV-5)}] Suppose $(K,\mu)$ is a weighted simplicial complex and $(X,\lambda)$ is a weighted space.
    If $f: (|K|,\lambda_{\mu}) \rightarrow (X,\lambda)$ is a W-continuous map, for any simplex $\sigma$ of $K$, denote the restriction of $f$ to $|\sigma|$ by:
    $$f|_{\sigma}: (|\sigma|,\lambda_{\mu|_{\sigma}}) \rightarrow (X,\lambda).
     $$
   \end{itemize}
 
  \begin{defi}[Stratum]  \label{Def:Stratum}
      For a weighted polyhedron $(X,\lambda)$, let 
     $$O^n(X,\lambda):=\{x\in X\,|\, \lambda(x)=n\} \subseteq X,\ n\in \mathrm{Im}(\lambda).$$
      We call $O^n(X,\lambda)$ a \emph{stratum} of $(X,\lambda)$.
So $X$ is the disjoint union of all its strata:
 $$X = \bigcup_{n\in \mathrm{Im}(\lambda)} O^n(X,\lambda).$$
  \end{defi}

 The \emph{Cartesian product of two weighted spaces} 
 $(X,\lambda)$ and $(X',\lambda')$ is a weighted space
 $(X\times X', \lambda\times \lambda')$ where
 $(\lambda\times \lambda')(x,x') = \lambda(x)\cdot\lambda'(x')$
 for all $x\in X$, $x'\in X'$.
 
\begin{lem} \label{Lem:Product-Weighted-Poly}
Suppose
 $(X,\lambda)$ and $(X',\lambda')$ are two weighted polyhedra of the same type, then their Cartesian product
 $(X\times X', \lambda\times \lambda')$ is also
 a weighted polyhedron with the same type as
 $(X,\lambda)$ and $(X',\lambda')$.
 \end{lem}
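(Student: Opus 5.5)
The plan is to use the Cartesian product of weighted simplicial complexes from Definition~\ref{Def:Product-Wt-Complexes}, after first passing to divisible weights. By Corollary~\ref{Cor:Div-Weight-Triangl}, we choose divisibly weighted triangulations $(K,\mu)$ of $(X,\lambda)$ and $(K',\mu')$ of $(X',\lambda')$, both of the given type. Fix total orderings of the vertices of $K$ and $K'$, and form $(K\times K',\mu\times\mu')$. By Definition~\ref{Def:Product-Wt-Complexes} it has the same type as the factors. Since $X\cong|K|$ and $X'\cong|K'|$ by weight-preserving homeomorphisms, it suffices to show that the standard homeomorphism $h:|K\times K'|\to|K|\times|K'|$ satisfies
$$(\lambda_{\mu}\times\lambda_{\mu'})(h(z))=\lambda_{\mu\times\mu'}(z)\quad\text{for all } z\in|K\times K'|.$$
Here $h$ is induced by the two vertex projections, as in Munkres.

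To verify this identity, take $z$ in the relative interior of a simplex $\rho=\{(v_{i_1},v'_{j_1}),\dots,(v_{i_s},v'_{j_s})\}$ with $i_1\le\cdots\le i_s$ and $j_1\le\cdots\le j_s$, and write $z=\sum t_k(v_{i_k},v'_{j_k})$ with all $t_k>0$. Then $h(z)=(x,x')$, where $x=\sum t_k v_{i_k}$ and $x'=\sum t_k v'_{j_k}$. After collecting repeated vertices, $x$ has positive barycentric coordinates exactly on $\sigma=\{v_{i_1},\dots,v_{i_s}\}$, so $\mathrm{Car}_K(x)=\sigma$. In the same way $\mathrm{Car}_{K'}(x')=\sigma'=\{v'_{j_1},\dots,v'_{j_s}\}$. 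Combining this with \eqref{Equ:Product-Weight} gives
$$\lambda_{\mu\times\mu'}(z)=\mu(\sigma)\mu'(\sigma')=\lambda_\mu(x)\lambda_{\mu'}(x').$$

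The remaining checks are that $\mu\times\mu'$ really is a weight of the right type and that $h$ is a homeomorphism. For the first, if $\rho_1\subseteq\rho$ is a face, then its projections $\sigma_1\subseteq\sigma$ and $\sigma'_1\subseteq\sigma'$ are faces of the projections of $\rho$. Hence $\mu(\sigma_1)\mu'(\sigma'_1)$ divides $\mu(\sigma)\mu'(\sigma')$ in the ascending case, and the divisibility is reversed in the descending case. For the second, $h$ is a homeomorphism by the classical fact about the ordered product triangulation (Munkres, the staircase subdivision of products of simplices). I expect this classical fact to be the main point needing care if $K$ or $K'$ is infinite, since the product topology and the CW topology on $|K|\times|K'|$ may then differ. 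I would either cite the standard result, assuming the complexes are locally finite or finite as is implicit in the paper, or note that the identification is valid simplex by simplex. The weight identity itself is routine, so no divisibility of the factors is actually needed beyond the above; divisibility only makes the triangulation convenient.
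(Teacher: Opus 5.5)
Your proposal is correct and follows essentially the same route as the paper: take weighted triangulations $(K,\mu)$ and $(K',\mu')$, form $(K\times K',\mu\times\mu')$, and use the carrier computation to show its weight at a point over $(x,x')$ is $\lambda(x)\lambda'(x')$. Your extra steps only make explicit what the paper leaves as ``easy to see'': the barycentric-coordinate argument for the carriers, the divisibility check that $\mu\times\mu'$ has the right type, and the remark on $h$ being a homeomorphism. As you note yourself, the preliminary passage to divisibly weighted triangulations is unnecessary.
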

 \begin{proof}
 Let $(K,\mu)$ and $(K',\mu')$ be weighted triangulations of $(X,\lambda)$ and  $(X',\lambda')$, respectively.
 We claim that the Cartesian product $(K\times K',\mu\times \mu')$
 is a weighted triangulation of $(X\times X', \lambda\times \lambda')$. Indeed,
 suppose a point $(x,x')\in K\times K'$ is carried by
 a simplex $
   \{(v_{i_1},v'_{j_1}),\cdots, (v_{i_s},v'_{j_s})\}$ of $K\times K'$
  where
  $\{v_{i_1},\cdots, v_{i_s}\}$ is a simplex in $K$
  and $\{v'_{j_1},\cdots, v'_{j_s}\}$ is a simplex in $K'$.
  Then $x$ is carried by $\{v_{i_1},\cdots, v_{i_s}\}$
  while $x'$ is carried by $\{v'_{j_1},\cdots, v'_{j_s}\}$.
  So by Definition~\ref{Def:Product-Wt-Complexes}, 
\begin{align*}
  \mu\times\mu'\big(\{(v_{i_1},v'_{j_1}),\cdots, (v_{i_s},v'_{j_s})\}\big) &\overset{\eqref{Equ:Product-Weight}}{=} \mu(\{v_{i_1},\cdots, v_{i_s}\})\cdot \mu'(\{v'_{j_1},\cdots, v'_{j_s}\}) \\
   &\overset{\eqref{Equ:lambda-mu-relation}}{=} \lambda(x)\lambda'(x') = \lambda\times\lambda'((x,x')).
\end{align*}  
 Clearly, $(K\times K',\mu\times \mu')$ has the same type as $(K,\mu)$ and $(K',\mu')$ by definition.
   \end{proof} 
  
    \begin{defi}[W-Homotopy]\label{Def:W-homotopy-Equiv} Let $(X,\lambda)$ and $(X',\lambda')$ be 
    weighted polyhedra of the same type and
       $f, g: (X,\lambda)\rightarrow (X',\lambda')$ be two  
     W-continuous maps. A \emph{W-homotopy} from $f$ to $g$ relative
to $A\subseteq X$ is a W-continuous map
$$H : (X \times [0, 1],\lambda\times \mathbf{1}) \rightarrow (X',\lambda')$$ which satisfies:
 \begin{itemize}
   \item $H(x, 0) =f(x)$ and $H(x,1)=g(x)$ for all $x\in X$;\n
   \item $H(a,t)=f(a)=g(a)$ for all $a\in A$ and 
   $t\in [0,1]$.\n
   \end{itemize}
   
   Here $([0,1],\mathbf{1})$ is considered as a weighted polyhedron, and $(X\times [0,1],\lambda\times \mathbf{1})$ is the product $(X,\lambda)$ with
   $([0,1],\mathbf{1})$.
   \n
   If there exists a W-homotopy from $f$ to $g$,
   we say that $f$ is \emph{W-homotopic to} $g$.
   If moreover $H$ is weight-preserving, we say
   that $f$ is \emph{strongly W-homotopic to} $g$ (a priori $f$ and $g$ must be both weight-preserving).
    \end{defi}
    
    The following lemma is obvious from
    the above definition.
     
     \begin{lem}
    If two morphisms $\varrho_0,\varrho_1: (K,\mu)\rightarrow (K',\mu')$ of weighted simplicial complexes are (strongly) contiguous, then the maps $\overline{\varrho}_0,
     \overline{\varrho}_1 : (|K|,\lambda_{\mu})\rightarrow
 (|K'|,\lambda_{\mu'})$ are (strongly) W-homotopic.
 \end{lem}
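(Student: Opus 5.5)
The plan is to realize the combinatorial homotopy as a map on geometric realizations and check W-continuity directly. Let $F:(K\times[0,1],\mu\times\mathbf{1})\rightarrow (K',\mu')$ be the morphism given by Definition~\ref{Def:Contigu}. By Lemma~\ref{Lem:Product-Weighted-Poly} (more precisely, its proof), $(K\times[0,1],\mu\times\mathbf{1})$ is a weighted triangulation of the weighted polyhedron $(|K|\times[0,1],\lambda_{\mu}\times\mathbf{1})$. In particular, there is a weight-preserving homeomorphism $h:(|K|\times[0,1],\lambda_\mu\times\mathbf{1})\rightarrow (|K\times[0,1]|,\lambda_{\mu\times\mathbf{1}})$. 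It restricts to the identity on $|K|\times\{0\}$ and $|K|\times\{1\}$, where these ends are identified with the copies $K\times\{0\}$ and $K\times\{1\}$.

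Set $H:=|F|\circ h$. By Example~\ref{Exam:Geo-Realiztion}, $|F|$ is W-continuous, and it is weight-preserving when $F$ is. Since $h$ is weight-preserving, $H$ is W-continuous, because the divisibility conditions in Definition~\ref{Def:W-cont-map} are checked pointwise and compose.

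Next, check the endpoints. The restriction of $F$ to the subcomplex $K\times\{0\}\cong K$ is $\varrho_0$, so $H(x,0)=|\varrho_0|(x)=\overline{\varrho}_0(x)$. In the same way, $H(x,1)=\overline{\varrho}_1(x)$. Here one uses that geometric realization commutes with restriction to subcomplexes. Hence $H$ is a W-homotopy, relative to $A=\varnothing$, from $\overline{\varrho}_0$ to $\overline{\varrho}_1$. In the strongly contiguous case $F$ is weight-preserving, so $H$ is weight-preserving, and the homotopy is strong.

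The only point needing care is the identification $|K\times[0,1]|\cong|K|\times[0,1]$. It must be compatible with the weights and with the two end copies of $K$. This is exactly what the proof of Lemma~\ref{Lem:Product-Weighted-Poly} provides: a point $(x,t)$ is carried by a product simplex whose projections carry $x$ and $t$. That step is where I would spend the most attention; everything else is formal.
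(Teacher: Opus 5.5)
Your proposal is correct and is exactly the routine verification the paper leaves implicit when it calls this lemma obvious from the definitions. The homotopy is the geometric realization of the contiguity morphism $F$, transported to $|K|\times[0,1]$ via the weight-preserving identification from the proof of Lemma~\ref{Lem:Product-Weighted-Poly}; W-continuity, the endpoint conditions, and weight-preservation in the strong case all then follow pointwise, as you describe.
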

    
   \begin{rem}
    ``W-continuous map'' and ``W-homotopy''
   have been defined by Takeuchi and Yokoyama~\cite[Sec.\,5]{TakYok07} for orbifolds which require all the maps to be weight-preserving. Here we redefine these two terms for weighted polyhedra.
    \end{rem}
    
  \begin{defi}[W-Homotopy Equivalence] \label{Def:W-Homo-Equiv} 
  Suppose $(X,\lambda)$ and $(X',\lambda')$ are two weighted polyhedra of the same type. We say that $(X,\lambda)$ is \emph{W-homotopy equivalent} to 
  $(X',\lambda')$ if there exist W-continuous maps
  $$f: (X,\lambda)\rightarrow (X',\lambda'), \ \ 
  g : (X',\lambda')\rightarrow (X,\lambda)$$
such that $g \circ f$ and $f \circ g$
 are W-homotopic to $\mathrm{id}_{X}$ and 
 $\mathrm{id}_{X'}$, respectively. The map $f$ is called a
 \emph{W-homotopy equivalence} from $(X,\lambda)$ to $(X',\lambda')$, and $g$ is called a \emph{W-homotopy inverse} of $f$.  \n
 Moreover,
  $(X,\lambda)$ and $(X',\lambda')$ are called 
  \emph{strongly W-homotopy equivalent} if in the above definition,  $f$ and $g$ are both weight-preserving, and $g \circ f$ and $f \circ g$
 are strongly W-homotopic to $\mathrm{id}_{X}$ and 
 $\mathrm{id}_{X'}$, respectively. If so, we call $f$ and $g$ \emph{strong W-homotopy equivalences}.
\end{defi}

 \begin{exam}
  A weighted polyhedron $(X,\lambda)$ is always strongly W-homotopy equivalent to 
  $(X\times [0,1],\lambda\times \mathbf{1})$. 
 \end{exam}

 \begin{rem} \label{Rem:Quotient-Not}
   It is not meaningful to define the quotient space of a weighted polyhedron in general. This is because if $(X,\lambda)$ is a weighted polyhedron and $A$ is a subspace of $X$, the restriction of $\lambda$ to $A$ may not be constant. So there is no natural weight on the quotient space $X\slash A$ induced from $\lambda$.
 \end{rem}

 \subsection{Weighted singular homology}
 \ \n
 
 Let $(X,\lambda)$ be a weighted polyhedron.
 A \emph{weighted singular $n$-simplex} of $(X,\lambda)$ is a W-continuous map
 $\theta: (|\Delta^n|,\lambda_{\xi}) \rightarrow (X,\lambda)$ where $(\Delta^n,\xi)$ is a
  divisibly weighted $n$-simplex. If $\sigma$ is a $k$-face of $\Delta^n$, then 
 $\theta|_{\sigma}: (|\sigma|, \lambda_{\xi|_{\sigma}}) \rightarrow (X,\lambda)$ 
is a weighted singular $k$-simplex of $(X,\lambda)$. \n
 
 Let $\mathcal{W}_n(X,\lambda)$ denote the free abelian group
 generated by all weighted singular $n$-simplices of $(X,\lambda)$.  The boundary map
  on $\mathcal{W}_n(X,\lambda)$ is
 $$ \partial^{\lambda}: \mathcal{W}_n(X,\lambda) \rightarrow 
 \mathcal{W}_{n-1}(X,\lambda)$$
 where for each weighted singular simplex $\theta: (|\Delta^n|,\lambda_{\xi}) \rightarrow (X,\lambda)$,
  \begin{equation} \label{Equ:Bd-Sing}
   \partial^{\lambda} \theta:=
   \sum^n_{j=0}  (-1)^j c^{\xi}_j(\Delta^n)\cdot \theta|_{\partial_j \Delta^n}. 
   \end{equation}
  Here the coefficient $c^{\xi}_j(\Delta^n)$ is defined in~\eqref{Equ:Boundary-Unified} and $\partial_j \Delta^{n}$ is canonically identified with $\Delta^{n-1}$. It is clear that $\partial^{\lambda}\circ \partial^{\lambda}=0$.  We call
  $\big(\mathcal{W}_*(X,\lambda), \partial^{\lambda} \big)$
 the \emph{weighted singular chain complex} of $(X,\lambda)$, and call its homology $H_*\big(\mathcal{W}_*(X,\lambda), \partial^{\lambda} \big)$ the \emph{weighted
 singular homology} of $(X,\lambda)$, denoted by
 $\mathcal{H}^{W}_*(X,\lambda)$. 
 \n
 
If $f: (X,\lambda) \rightarrow (X',\lambda')$ is a W-continuous map, then $f$ induces a linear map
\begin{align} \label{Equ:f-chain-1}
  f_{\#}: \big(\mathcal{W}_*(X,\lambda), \partial^{\lambda} \big) &\longrightarrow \big(\mathcal{W}_*(X',\lambda'), \partial^{\lambda'} \big) \\
 \theta \ \ \ &\longmapsto\ \ \  f\circ\theta \notag
   \end{align}
   for each weighted singular simplex $\theta$ of $(X,\lambda)$.   
  It is clear that $f_{\#}$ is a
   chain map, and hence induces a homomorphism 
  $$ f_*: \mathcal{H}^{W}_*(X,\lambda) \rightarrow \mathcal{H}^{W}_*(X',\lambda').$$
  
 By the above definitions, it is easy to see that weighted singular homology is an invariant under isomorphisms of weighted polyhedra. Moreover, the following proposition shows that
  it is an invariant under W-homotopy equivalences, which generalizes the homotopy invariance of the ordinary singular homology.\n
  
  \begin{prop}
 Let $f, g: (X,\lambda)\rightarrow (X',\lambda')$ be
  two W-continuous maps and $f$ is W-homotopic to $g$.
  Then $f_*=g_*: \mathcal{H}^{W}_*(X,\lambda) \rightarrow \mathcal{H}^{W}_*(X',\lambda')$.
  In particular, any W-homotopy equivalence between $(X,\lambda)$ and $(X',\lambda')$ induces an isomorphism on weighted singular homology.
   \end{prop}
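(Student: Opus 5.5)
Following the classical argument, I will construct a prism operator on weighted singular chains and show it is a chain homotopy between $f_{\#}$ and $g_{\#}$. Let $H:(X\times[0,1],\lambda\times\mathbf{1})\rightarrow(X',\lambda')$ be the W-homotopy from $f$ to $g$. For a weighted singular $n$-simplex $\theta:(|\Delta^n|,\lambda_{\xi})\rightarrow(X,\lambda)$, the map $\theta\times\mathrm{id}$ composed with $H$ gives a W-continuous map
\[
  H\circ(\theta\times \mathrm{id}) : (|\Delta^n|\times[0,1],\lambda_{\xi}\times\mathbf{1})\rightarrow (X',\lambda').
\]
W-continuity holds because $\lambda\times\mathbf{1}$ is weight-preserving along $\theta\times\mathrm{id}$ up to the divisibility relation satisfied by $\theta$.

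Give the vertices of $\Delta^n\times\{0\}$ and $\Delta^n\times\{1\}$ the names $v_0,\dots,v_n$ and $w_0,\dots,w_n$, and order them $v_0<\dots<v_n$, $0<1$. The Cartesian product $(\Delta^n\times[0,1],\xi\times\mathbf{1})$ of Definition~\ref{Def:Product-Wt-Complexes} is then triangulated by the standard $(n+1)$-simplices
\[
  \Sigma_i=[v_0,\dots,v_i,w_i,\dots,w_n],\qquad 0\le i\le n.
\]
The weight $\xi\times\mathbf{1}$ restricted to $\Sigma_i$ is determined by the projection to $\Delta^n$, so the vertex weights are $\xi(v_0),\dots,\xi(v_i),\xi(v_i),\dots,\xi(v_n)$. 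With $\xi$ divisible and the vertices $v_j$ ordered compatibly with divisibility (I reorder $\Delta^n$ first if needed, since the ordering is at our disposal), each $\Sigma_i$ is a divisibly weighted $(n+1)$-simplex. Hence $H\circ(\theta\times\mathrm{id})|_{\Sigma_i}$ is a weighted singular $(n+1)$-simplex of $(X',\lambda')$.

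I then define
\[
  P(\theta)=\sum_{i=0}^{n}(-1)^i\, a_i(\theta)\, H\circ(\theta\times\mathrm{id})|_{\Sigma_i},
\]
where the integer coefficients $a_i(\theta)$ are chosen to correct weight ratios. The natural choice comes from the weighted simplicial chain level: it is the chain map induced by the prism, which is the chain homotopy of Theorem~\ref{Thm:Contig} (Dawson's contiguity theorem) applied to the two inclusions $\Delta^n\rightarrow\Delta^n\times[0,1]$ at $0$ and $1$. Concretely, I take the chain homotopy $D$ on $(C_*(\Delta^n\times[0,1]),\partial^{\xi\times\mathbf{1}})$ between $(i_0)_{\#}$ and $(i_1)_{\#}$ given by Theorem~\ref{Thm:Contig}, with
\[
  \partial^{\xi\times\mathbf{1}}D+D\partial^{\xi}=(i_1)_{\#}-(i_0)_{\#}.
\]
I set $P(\theta)=(H\circ(\theta\times\mathrm{id}))_{\#}D(\Delta^n)$, pushing weighted simplicial simplices forward as weighted singular simplices. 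The pushforward of a weighted simplicial chain of a divisibly weighted complex along a W-continuous map commutes with the weighted boundaries, by comparing \eqref{Equ:Bd-Sing} with \eqref{Equ:Boundary-Unified}. This gives
\[
  \partial^{\lambda'}P+P\partial^{\lambda}=g_{\#}-f_{\#},
\]
and hence $f_*=g_*$.

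The main obstacle is naturality: $D$ must be natural with respect to face inclusions $\partial_j\Delta^n\hookrightarrow\Delta^n$, with the weights restricted, so that $P(\partial^{\lambda}\theta)$ matches $D(\partial^{\xi}\Delta^n)$ pushed forward. The weighted coefficients $c^{\xi}_j$ must also appear identically on both sides. I will make this explicit by writing $D(\sigma)=\sum_i(-1)^i\,c_i\,\Sigma_i$ with $c_i=\xi\times\mathbf{1}(\Sigma_i)/\xi(\sigma)$ in the ascending case and the reciprocal in the descending case. Since $\xi\times\mathbf{1}(\Sigma_i)=\xi(\sigma)$, the simplest guess is $c_i=1$. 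I then verify the boundary identity by the classical prism cancellation: the interior faces cancel pairwise, and their weights agree because both are computed from the same projected face of $\Delta^n$. The top and bottom faces give $(i_1)_{\#}-(i_0)_{\#}$ with coefficient $1$, since they have the same weight as $\sigma$. The remaining side faces, weighted by $c^{\xi}_j(\sigma)$, give exactly $D\partial^{\xi}\sigma$.

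The final claim follows formally from functoriality, $(g\circ f)_*=g_*f_*$, together with homotopy invariance.
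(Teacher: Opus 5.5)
Your proposal is correct and is essentially the paper's proof. The paper uses exactly this prism operator $P_n(\theta)=\sum_{i=0}^n(-1)^i H\circ(\theta\times\mathrm{id})|_{[v_0,\dots,v_i,w_i,\dots,w_n]}$ with unit coefficients and calls $\partial^{\lambda'}P_n=g_{\#}-f_{\#}-P_{n-1}\partial^{\lambda}$ routine. Your face-by-face check is what that verification amounts to: interior faces project onto all of $\Delta^n$ and so have weight ratio $1$, while the side faces carry $c^{\xi}_j(\Delta^n)$ exactly as in $P\partial^{\lambda}$.

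Drop the aside about reordering the vertices of $\Delta^n$. It is never needed, because each $\Sigma_i$ is divisibly weighted for any divisible $\xi$: its vertex weights are those of $\Delta^n$ with one repeated, and $\xi\times\mathbf{1}$ of a face is $\xi$ of its projection. It would also not be legitimate, since precomposing $\theta$ with a vertex permutation gives a different singular simplex and spoils the top and bottom terms $g_{\#}\theta-f_{\#}\theta$.
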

   \begin{proof}
   For the standard simplex $\Delta^n$, 
  in $\Delta^n \times [0,1]$, let 
  $$\Delta^n \times\{0\}=\left[v_0, \cdots, v_n\right], \ \ \Delta^n \times\{1\}=\left[w_0, \cdots, w_n\right],$$
   where $v_i$ and $w_i$ have the same image under the projection $\Delta^n \times [0,1] \rightarrow \Delta^n$. So $\Delta^n \times [0,1]$ is the union of the $(n+1)$-simplices $\left[v_0, \cdots, v_i, w_i, \cdots, w_n\right]$.\n
  Let $H$ be a W-homotopy from $f$ to $g$. 
  Then for any weighted singular $n$-simplex $\theta: (|\Delta^n|,\lambda_{\xi}) \rightarrow (X,\lambda)$, we have a W-continuous map
  $$H \circ(\theta \times id_{[0,1]}): (|\Delta^n|\times [0,1],\lambda_{\xi}\times\mathbf{1}) \rightarrow (X \times [0,1], \lambda\times \mathbf{1}) \rightarrow (X',\lambda').$$
    Define a family of linear maps 
    $\{P_n: \mathcal{W}_n(X,\lambda) \rightarrow \mathcal{W}_{n+1}(X',\lambda') \}_{n\geq 0}$ by:
  \begin{align*}
      P_n( \theta )
   :=\sum^{n}_{i=0}(-1)^i
  H \circ(\theta \times id_{[0,1]}) \mid_{\left[v_0, \cdots, v_i, w_i, \cdots, w_n\right]} . 
  \end{align*}
     It is routine to check that $\partial^{\lambda'} P_n =g_{\#}-f_{\#}-P_{n-1} \partial^{\lambda}$ for all $n\geq 0$.
  This implies that $f_*$ agrees with $g_*$ on weighted singular homology. 
\end{proof}

   \subsection{Isomorphism between weighted singular homology and weighted simplicial homology} \label{Subsec:Isomorphism}
 \ \n
   
  The isomorphism between the ordinary singular homology and the simplicial homology of a simplicial complex $K$ can be proved by the excision theorem (see Hatcher~\cite[Theorem 2.27]{Hatcher02}).
  But the proof involves the computation of
  the singular homology groups of the quotient space $K^{(n)} \slash K^{(n-1)}$, which cannot be generalized to  weighted polyhedra (see Remark~\ref{Rem:Quotient-Not}).
  In the following, we will prove the isomorphism using the
  generalized Mayer-Vietoris sequence (see Bott and Tu~\cite{BotTu82}) and the \v{C}ech complex of a sheaf. \n
  
  Let $(X,\lambda)$ be a weighted polyhedron and $(K,\mu)$
  be an arbitrary divisibly weighted triangulation of $(X,\lambda)$. We identify $X$ with the geometric realization
  $|K|$ of $K$.
  The open stars of all the vertices of $K$ form an open cover of $|K|$, denoted by $\mathcal{U}$, that is
  \begin{equation} \label{Equ:U-Cover}
    \mathcal{U}= \{ \mathrm{St}(v,K), v\in K^{(0)} \}. 
    \end{equation}
   
  For any simplex $\sigma$ of $K$, we have
  $$ \mathrm{St}(\sigma, K) = \bigcap_{v\in V(\sigma)}
  \mathrm{St}(v,K). $$ 
    Clearly,
  $\big(\mathrm{St}(\sigma, K), \lambda|_{\mathrm{St}(\sigma, K)}\big)$ is also a weighted polyhedron. For brevity, we denote $\big(\mathrm{St}(\sigma, K), \lambda|_{\mathrm{St}(\sigma, K)}\big)$ by 
  $(\mathrm{St}(\sigma, K), \lambda)$.
     Then we obtain a sequence of inclusions
\[\begin{tikzcd}
	X=|K| & {\coprod\limits_{\mathrm{dim}(\sigma)=0} \mathrm{St}(\sigma,K)} & {\coprod\limits_{\mathrm{dim}(\sigma)=1} \mathrm{St}(\sigma,K)} & \cdots
	\arrow[from=1-2, to=1-1]
	\arrow[shift right, from=1-3, to=1-2]
	\arrow[shift left, from=1-3, to=1-2]
	\arrow[from=1-4, to=1-3]
	\arrow[shift left=3, from=1-4, to=1-3]
	\arrow[shift right=3, from=1-4, to=1-3]
\end{tikzcd}\]

Let $\mathcal{W}_*^{\mathcal{U}}(X,\lambda)$
 denote the group of \emph{$\mathcal{U}$-small
chains} in $\mathcal{W}_*(X,\lambda)$. These are chains made up of weighted singular simplices
 $\theta: (|\Delta^n|,\lambda_{\xi}) \rightarrow (X,\lambda)$ 
where the image of $\theta: |\Delta^n|\rightarrow X$ lies in some open set
$\mathrm{St}(v,K)$ in $\mathcal{U}$.

\begin{lem} \label{Lem:Inclusion-Equiv}
The inclusion
$(\mathcal{W}_*^{\mathcal{U}}(X,\lambda),\partial^{\lambda}) \hookrightarrow (
\mathcal{W}_*(X,\lambda),\partial^{\lambda})$
is a chain homotopy equivalence.
\end{lem}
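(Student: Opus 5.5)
Following Hatcher's proof of the small-chains theorem \cite[Proposition 2.21]{Hatcher02}, I would build a weighted subdivision operator $S$ on $\mathcal{W}_*(X,\lambda)$ and a chain homotopy $T$ with $\partial^{\lambda}T+T\partial^{\lambda}=\mathrm{id}-S$. The tool is the weighted barycentric subdivision of Definition~\ref{Def:Bary-Subdiv-Weighted}. For a divisibly weighted simplex $(\Delta^n,\xi)$, Lemma~\ref{Lem:Barycen-Sub-Iso-Weight} gives a weight-preserving identification $(|\Delta^n|,\lambda_{\xi})\cong(|Sd(\Delta^n)|,\lambda_{Sd(\xi)})$. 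Hence each $n$-simplex $\tau=[b_{\sigma_0}\cdots b_{\sigma_n}]$ of $Sd(\Delta^n)$ carries the divisible weight $Sd(\xi)|_\tau$, whose vertex weights are $\xi(\sigma_0),\dots,\xi(\sigma_n)$. Using the ordering of $\tau$ and the canonical affine map $|\Delta^n|\to|\tau|$, the restriction $\theta|_\tau$ is again a weighted singular $n$-simplex. Define
\[
S(\theta)=\sum_{\sigma_0\subsetneq\cdots\subsetneq\sigma_n=\Delta^n}\varepsilon(\sigma_0,\dots,\sigma_n)\,\theta|_{[b_{\sigma_0}\cdots b_{\sigma_n}]},
\]
which is $\theta_\#$ applied to $Sd_\#(\Delta^n)$. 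By \eqref{Equ-Sd-n-simplex} every summand has top weight $\xi(\Delta^n)$. Lemma~\ref{Lem:Sd-Weight-Chain} then shows that $S$ commutes with $\partial^{\lambda}$: naturality reduces this to the identity $\partial^{Sd(\xi)}Sd_\#=Sd_\#\partial^{\xi}$ on the model simplex, and faces of $\theta|_\tau$ are restrictions of $\theta$ to faces of $\tau$ with matching weight coefficients.

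For the homotopy $T$ I would work first on weighted linear chains in a model $(|\Delta^n|,\lambda_{\xi})$, then transport by $\theta_\#$. Define $T$ inductively by $T(\alpha)=b_{\Delta^n}\cdot(\alpha-S\alpha-T\partial^{\lambda}\alpha)$, where the cone on a weighted linear simplex $\beta$ (with vertices among barycenters) is the linear simplex $b_{\Delta^n}\cdot\beta$, weighted by the restriction of $\lambda_\xi$. Two points need verification. First, the cone must again be a divisibly weighted, W-continuous simplex. A natural way is to give the cone point the weight $\xi(\Delta^n)$, which is the top (or bottom) of the divisibility chain for ascending (or descending) type, so divisibility is preserved. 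I expect this to work because all simplices produced lie in the flags $\sigma_0\subsetneq\cdots\subsetneq\Delta^n$. Second, the weighted cone formula $\partial^{Sd(\xi)}(b\cdot\beta)=c\,\beta-b\cdot\partial\beta$, in the form used at the step $\overset{\divideontimes}{=}$ in the proof of Lemma~\ref{Lem:Sd-Weight-Chain}, must hold with the correct coefficients.

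The main obstacle is exactly this compatibility of the cone construction with weights. The ordinary argument uses cones over arbitrary linear chains, but here the coefficient $c^{\xi}_j$ that appears when taking the boundary of a cone must cancel in the identity $\partial T+T\partial=\mathrm{id}-S$. I would try to restrict $T$ to chains whose simplices all have top weight equal to $\xi(\Delta^n)$, as in \eqref{Equ-Sd-n-simplex}, so that the cone coefficient is $1$ on the relevant terms. Failing that, I would verify the identity directly on the standard simplex, since the appendix lemma announced in the paper presumably does this computation.

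Once $S$ and $T$ are in hand, the rest is standard. Iterating gives $D_m=\sum_{i<m}TS^i$ with $\partial^{\lambda}D_m+D_m\partial^{\lambda}=\mathrm{id}-S^m$. By Lebesgue's lemma and the shrinking diameters of iterated subdivision, each $\theta$ has $S^m\theta\in\mathcal{W}^{\mathcal{U}}_*$ for $m$ large, and $S,T$ preserve $\mathcal{U}$-small chains. Choosing $m(\theta)$ minimal and defining $D\theta=D_{m(\theta)}\theta$ and $\rho=\mathrm{id}-\partial D-D\partial$, exactly as in Hatcher, yields a chain map $\rho:\mathcal{W}_*\to\mathcal{W}^{\mathcal{U}}_*$ that is a homotopy inverse to the inclusion.
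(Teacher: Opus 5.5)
Your subdivision operator $S$ (the weighted chain $Sd_{\#}(\Delta^n)$ pushed forward by $\theta_{\#}$, which is a chain map by Lemma~\ref{Lem:Sd-Weight-Chain}) agrees with the paper, and so does the closing Lebesgue/iteration argument. The gap is the chain homotopy $T$, which is the only genuinely weighted content of the lemma. You leave $\partial^{\lambda}T+T\partial^{\lambda}=\mathrm{id}-S$ unverified, and the recursion you do write, $T\iota=b\cdot(\iota-S\iota-T\partial^{\lambda}\iota)$ with cone point $b=b_{\Delta^n}$ of weight $\xi(\Delta^n)$, is false.

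Every face of $b\cdot\beta$ through $b$ has the top weight. So in the ascending case (the descending case is symmetric) $\partial^{\lambda}(b\cdot\beta)=\frac{\xi(\Delta^n)}{w(\beta)}\,\beta-b\cdot\partial\beta$, where $w(\beta)$ is the weight of $\beta$ and $\partial$ is the unweighted face sum. But $T\partial^{\lambda}\iota=\sum_j(-1)^jc^{\xi}_j(\Delta^n)\,T(\iota|_{\partial_j\Delta^n})$ already carries the factor $c^{\xi}_j(\Delta^n)$. Its simplices have weight $\xi(\partial_j\Delta^n)$, so coning with $b$ multiplies by $c^{\xi}_j(\Delta^n)$ a second time. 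Concretely, take $\xi(v_0)=\xi(v_1)=1$, $\xi(v_2)=2$ and $b'=b_{[v_0v_1]}$. The simplex $[b',v_0,v_1]$ then has coefficient $-4$ in $\partial^{\lambda}T\iota$ but $-2$ in $\iota-S\iota-T\partial^{\lambda}\iota$. Restricting $T$ to top-weight chains cannot help, because $T$ of the lower-weight faces is forced into the recursion.

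The missing idea, which is what the paper's appendix uses, is to run the recursion with the \emph{unweighted} boundary $\partial$ (also a differential on linear chains) and let the weighted cone create the coefficients. Set $Sd_{\#}\theta=p_{\theta}\cdot Sd_{\#}(\partial\theta)$ and $T\theta=p_{\theta}\cdot(\theta-T\partial\theta)$, with $p_{\theta}=\theta(b_{\Delta^n})$ weighted $\xi(\Delta^n)$. Then Hatcher's identity $\partial T+T\partial=\mathrm{id}-Sd_{\#}$ holds verbatim. Meanwhile $\partial^{\lambda}\bigl(p_{\theta}\cdot T(\theta|_{\partial_j\Delta^n})\bigr)=c^{\xi}_j(\Delta^n)\,T(\theta|_{\partial_j\Delta^n})-p_{\theta}\cdot\partial T(\theta|_{\partial_j\Delta^n})$ inserts each $c^{\xi}_j(\Delta^n)$ exactly once. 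Summing over $j$ gives $T(\partial^{\lambda}\theta)$. The leftover term $p_{\theta}\cdot\partial T(\partial\theta)=p_{\theta}\cdot\partial\theta-Sd_{\#}\theta$ is handled by the unweighted identity, which yields $\partial^{\lambda}T+T\partial^{\lambda}=\mathrm{id}-Sd_{\#}$.

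Alternatively, your acyclic-models recursion does work if you cone from the vertex of extreme weight (minimal for ascending, maximal for descending type) rather than from $b_{\Delta^n}$. Then $\partial^{\lambda}(p\cdot\beta)=\beta-p\cdot\partial^{\lambda}\beta$ for every weighted singular simplex $\beta$ of $(|\Delta^n|,\lambda_{\xi})$, as in the paper's proof of the $\mathcal{WP}$ version of Lemma~\ref{Lem:Retract-Barycenter}. Hence $\partial^{\lambda}(p\cdot z)=z$ for the $\partial^{\lambda}$-cycle $z=\iota-S\iota-T\partial^{\lambda}\iota$. Moreover $T\theta$ still lies in the image of $\theta$, which is all the iteration step needs.
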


 The proof of this lemma is completely parallel to the proof for the ordinary singular homology in~\cite[Proposition 2.21]{Hatcher02}.
 The idea is quite intuitive: to get an inverse chain map,
 we can use barycentric subdivision to subdivide each chain in $\mathcal{W}_*(X,\lambda)$ until it becomes $\mathcal{U}$-small. Since the proof is a little technical and lengthy, we put it in the appendix at the end.\n

 Define the \v{C}ech boundary operator
 $$d: \bigoplus_{\mathrm{dim}(\sigma)=n} \mathcal{W}_q\left(\mathrm{St}(\sigma,K) \right) \longrightarrow \bigoplus_{\mathrm{dim}(\sigma)=n-1} \mathcal{W}_q\left(\mathrm{St}(\sigma,K) \right) $$
 by the ``alternating sum formula'': for any weighted singular $q$-simplex $\theta \in \mathcal{W}_q\left(\mathrm{St}(\sigma,K)\right)$,
 $$ d \theta := \Big( (-1)^j  (\iota_{j})_{\#} (\theta) \Big)_{0\leq j \leq n} \in \bigoplus^n_{j=0} \mathcal{W}_q\left(\mathrm{St}(\partial_j\sigma,K) \right),$$
 where $\iota_j :  \mathrm{St}(\sigma,K) \hookrightarrow 
 \mathrm{St}(\partial_j\sigma,K)$ is the inclusion.
 It is routine to check  
 $d\circ d=0$.

 \begin{lem}[Generalized Mayer-Vietoris sequence for weighted singular chains] \label{Lem:MV-Seq}

 Let $(K,\mu)$ be a divisibly weighted triangulation of $(X,\lambda)$. Then there is an exact sequence of chain complex
$$
0 \leftarrow \mathcal{W}_q^{\mathcal{U}}(X,\lambda) \stackrel{\varepsilon}{\longleftarrow} \underset{\mathrm{dim}(\sigma)=0}{\bigoplus} \mathcal{W}_q\left( \mathrm{St}(\sigma,K),\lambda\right) \stackrel{d}{\longleftarrow} \underset{\mathrm{dim}(\sigma)=1}{\bigoplus} \mathcal{W}_q\left(\mathrm{St}(\sigma,K),\lambda\right) \stackrel{d}{\longleftarrow} \cdots$$
where $\varepsilon$ is summing up the chains from all
the open stars via inclusions into $X$.
\end{lem}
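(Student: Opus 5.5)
The exactness is checked one weighted singular simplex at a time. Fix $q$ and a W-continuous map $\theta: (|\Delta^q|,\lambda_{\xi})\rightarrow (X,\lambda)$. Let
$$S(\theta)=\{v\in K^{(0)} \,|\, \theta(|\Delta^q|)\subseteq \mathrm{St}(v,K)\}.$$
Since $\mathrm{St}(\sigma,K)=\bigcap_{v\in V(\sigma)}\mathrm{St}(v,K)$, the image of $\theta$ lies in $\mathrm{St}(\sigma,K)$ exactly when $V(\sigma)\subseteq S(\theta)$. Moreover, a set of vertices whose open stars have nonempty common intersection spans a simplex of $K$. Hence the simplices $\sigma$ with $V(\sigma)\subseteq S(\theta)$ are all faces of one simplex $\Sigma(\theta)$, spanned by $S(\theta)$, provided $S(\theta)\neq\varnothing$.

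The weight on $\mathrm{St}(\sigma,K)$ is just the restriction of $\lambda$. So $\theta$ is a weighted singular simplex of $(\mathrm{St}(\sigma,K),\lambda)$ if and only if it is one of $(X,\lambda)$ with image in $\mathrm{St}(\sigma,K)$. The maps $\varepsilon$ and $d$ are induced by inclusions, so they send the copy of $\theta$ in one summand to copies of the same $\theta$ in other summands, with signs.

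Consequently the whole augmented complex, in degree $q$, splits as a direct sum over weighted singular $q$-simplices $\theta$ of $(X,\lambda)$. Only the $\mathcal{U}$-small ones contribute, i.e.\ those with $S(\theta)\neq\varnothing$. The summand for $\theta$ is
$$0\leftarrow \Z\theta \xleftarrow{\ \varepsilon\ } \bigoplus_{v\in S(\theta)}\Z\theta \xleftarrow{\ d\ } \bigoplus_{\sigma\subseteq\Sigma(\theta),\,\dim\sigma=1}\Z\theta \xleftarrow{\ d\ }\cdots.$$
With the alternating-sign convention for $d$, this is precisely the augmented simplicial chain complex of the full simplex $\Sigma(\theta)$ with ordinary (unweighted) coefficients. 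The weights do not enter, since $d$ and $\varepsilon$ carry no weight coefficients. This augmented complex is acyclic because a simplex is contractible. One can also see this directly: choose a vertex $v_0\in S(\theta)$ and use the cone operator $\sigma\mapsto v_0\cdot\sigma$ as a contracting homotopy. Exactness of the original sequence follows, since a direct sum of exact sequences is exact. Surjectivity of $\varepsilon$ is just the statement that each $\mathcal{U}$-small simplex lies in some star.

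It remains to see that this is an exact sequence of chain complexes, i.e.\ that $\varepsilon$ and $d$ commute with $\partial^{\lambda}$. Both are built from pushforwards along inclusions, which are chain maps by~\eqref{Equ:f-chain-1}. A face $\theta|_{\partial_j\Delta^q}$ of a simplex with image in $\mathrm{St}(\sigma,K)$ again has image there. So the faces stay in the same summand, and the coefficients $c^{\xi}_j(\Delta^q)$ are unchanged.

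The only point needing care is the compatibility of signs in $d$. One must check that the sign $(-1)^j$ attached to $\partial_j\sigma$ matches the simplicial boundary of the ordered simplex $\Sigma(\theta)$. This holds once a total ordering of the vertices of $K$ is fixed and each $\sigma$ is oriented accordingly. The main obstacle I expect is therefore this bookkeeping of orientations, together with the combinatorial fact that vertices with intersecting open stars span a simplex. The latter follows because any point in the intersection has a carrier containing all those vertices.
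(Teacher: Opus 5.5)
Your argument is correct, and it takes a more explicit route than the paper. The paper does not prove the lemma directly. It observes that $\mathcal{W}^{\mathcal{U}}_*(X,\lambda)$ is a differential cosheaf on $\mathcal{U}$ and cites Bredon's theorem on exactness of the \v{C}ech resolution of a cosheaf, adding that one could instead imitate Bott--Tu's proof of the singular-chain Mayer--Vietoris principle.

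You instead split each row into a direct sum over individual weighted singular simplices $\theta$. You then identify the $\theta$-summand with the augmented simplicial chain complex of the full simplex spanned by $S(\theta)$, using that the nerve of $\{\mathrm{St}(v,K)\}$ is $K$ itself. This is essentially the argument behind the Bott--Tu reference, written out in full. Your cone from a vertex $v_0\in S(\theta)$ is the usual ``choose one open set containing the simplex'' contracting homotopy.

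The cosheaf route is shorter and places the lemma in a general framework. However, it leaves to the reader two things: checking Bredon's hypotheses, and matching the paper's \v{C}ech complex, indexed by simplices of $K$, with the one indexed by nonempty intersections of the cover. Your nerve observation handles that second point explicitly.

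Your route is also self-contained and shows that neither the weights nor divisibility matter for row exactness. The maps $\varepsilon$ and $d$ carry no weight coefficients and only need to commute with $\partial^{\lambda}$. Divisibility enters only later, for the column exactness in Lemma~\ref{Lem:Retract-Barycenter}. The orientation bookkeeping you flag is harmless once a global vertex order on $K$ is fixed, which the definition of $d$ already presupposes.
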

\begin{proof}
  Using the sheaf theoretic language, $(\mathcal{W}_*^{\mathcal{U}}(X,\lambda),\partial^{\lambda})$ is a differential cosheaf on $\mathcal{U}$  (see Bredon~\cite[Chapter VI]{Bredon97}). So by~\cite[p.\,426, Theorem 4.4]{Bredon97},
  the above sequence is exact. We can also prove this lemma using the similar argument as~\cite[Proposition 15.2]{BotTu82}. 
\end{proof}

\begin{thm}\label{Thm:Main-Isom}
For any
divisibly weighted triangulation $(K,\mu)$ of 
a weighted polyhedron $(X,\lambda)$, there is an isomorphism between
$H_*(K,\partial^{\mu})$ and $\mathcal{H}^{W}_*(X,\lambda)$.
\end{thm}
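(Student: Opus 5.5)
We plan to compare both sides through a double complex built from the Mayer--Vietoris sequence of Lemma~\ref{Lem:MV-Seq}. Consider the double complex
$$E_{n,q} := \bigoplus_{\dim(\sigma)=n} \mathcal{W}_q\big(\mathrm{St}(\sigma,K),\lambda\big),$$
with horizontal differential the \v{C}ech operator $d$ and vertical differential the weighted singular boundary $\partial^{\lambda}$. We will compute the homology of its total complex in two ways.

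\emph{Rows.} By Lemma~\ref{Lem:MV-Seq}, each row is exact and augments onto $\mathcal{W}^{\mathcal{U}}_q(X,\lambda)$. The standard double-complex argument, filtering by $q$, then shows that $\varepsilon$ induces a quasi-isomorphism $\mathrm{Tot}(E)\to \mathcal{W}^{\mathcal{U}}_*(X,\lambda)$. By Lemma~\ref{Lem:Inclusion-Equiv}, the latter computes $\mathcal{H}^W_*(X,\lambda)$.

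\emph{Columns.} Here we must compute $\mathcal{H}^W_*(\mathrm{St}(\sigma,K),\lambda)$. Since $\mu$ is divisible and ascending weights increase along faces, $\lambda$ attains its minimum on $|\mathrm{star}_K(\sigma)|\cap \mathrm{St}(\sigma,K)$ at points of $|\sigma|^\circ$, and there it equals $\mu(\sigma)$. (For descending weights $\mu(\sigma)$ is the maximum instead.) Every point $x$ of the open star lies in the interior of a simplex $\tau\supseteq\sigma$, so $\mu(\sigma)\mid\lambda(x)$ in the ascending case and $\lambda(x)\mid\mu(\sigma)$ in the descending case.

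We then use the straight-line deformation $H(x,t)=(1-t)x+t\,b_\sigma$ onto the barycenter. For $t<1$ the point $H(x,t)$ stays in the interior of the same simplex $\tau$, so its weight is unchanged. At $t=1$ the weight becomes $\mu(\sigma)$, and the divisibility just noted makes $H$ W-continuous. The inclusion $b_\sigma\hookrightarrow\mathrm{St}(\sigma,K)$ is weight-preserving, hence W-continuous. By the W-homotopy invariance proposition, $(\mathrm{St}(\sigma,K),\lambda)$ is W-homotopy equivalent to the one-point weighted space $(\{b_\sigma\},\mu(\sigma))$.

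It remains to compute the weighted singular homology of a point $(pt,m)$. A W-continuous map $|\Delta^n|\to pt$ requires every vertex weight of $\xi$ to divide $m$ (ascending case; reversed in the descending case), so there are many simplices in each degree. We therefore need a cone-type contraction showing $\mathcal{H}^W_0(pt,m)\cong\Z$ and $\mathcal{H}^W_q=0$ for $q>0$. This is the main obstacle. The plan is to cone a weighted simplex $\theta$ with divisible weight $\xi$ by adjoining a new vertex of weight $m$ in front. This yields a divisible weight on $\Delta^{n+1}$, because $m$ is extremal among the allowed weights, and gives an operator $P$ with $\partial^\lambda P+P\partial^\lambda=\mathrm{id}-(\text{augmentation})$ in positive degrees, up to the coefficients $c^\xi_j$. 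The coefficient of the face opposite the new vertex is exactly $1$, since the top weight of the cone equals $m$ in the ascending case. In degree $0$, a point of weight $k\mid m$ is homologous via the cone edge to $(m/k)$ times, or the appropriate multiple of, the point of weight $m$. The resulting identification should give $H_0\cong\Z$ generated by the vertex of weight $m$. We will carry out this computation first, mirroring Proposition~\ref{Prop:Wt-Div-Simplex}.

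\emph{Conclusion.} With the columns computed, the $E^1$ page of the spectral sequence filtered by $n$ is concentrated in $q=0$, where it equals $\bigoplus_{\dim\sigma=n}\Z\langle b_\sigma\rangle$. The induced $d^1$ is the \v{C}ech map on these generators. We must check that, after identifying each generator with the point of weight $\mu(\sigma)$, the inclusions $\mathrm{St}(\sigma)\hookrightarrow\mathrm{St}(\partial_j\sigma)$ act by multiplication by $c^\mu_j(\sigma)$. This holds because the class of a point of weight $\mu(\sigma)$ in $\mathrm{St}(\partial_j\sigma)$ equals $\mu(\sigma)/\mu(\partial_j\sigma)$ (or its inverse) times the generator, by the degree-$0$ computation above. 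Hence $E^1$ with $d^1$ is exactly $(C_*(K),\partial^\mu)$, the spectral sequence collapses at $E^2$, and we obtain $\mathcal{H}^W_*(X,\lambda)\cong H_*(K,\partial^\mu)$.
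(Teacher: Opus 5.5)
Your argument is essentially the paper's. It uses the same augmented double complex: the rows are handled by Lemma~\ref{Lem:MV-Seq} and Lemma~\ref{Lem:Inclusion-Equiv}, and the columns by W-contracting $\mathrm{St}(\sigma,K)$ onto $b_\sigma$. Your $d^1$-identification is the paper's relation $\eta\circ d=\partial^{\mu}\circ\eta$.

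Your cone computation of $\mathcal{H}^W_*(pt,m)$ also spells out something the paper leaves implicit. Lemma~\ref{Lem:Retract-Barycenter} assumes that this homology vanishes in positive degrees, and your argument is the same cone trick the paper uses for $\mathcal{WP}_*$.

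One correction: you have the divisibility backwards. A W-continuous map into $(pt,m)$ forces $m\mid\xi(v)$ in the ascending case and $\xi(v)\mid m$ in the descending case. The face opposite the new vertex gets coefficient $1$ because the cone's weight is $\xi(\Delta^n)$, not $m$. With the direction as you wrote it, that coefficient would be $m/\xi(\Delta^n)$ (resp.\ $\xi(\Delta^n)/m$), and the contraction would fail.
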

\begin{proof}
  Consider the following augmented double complex:  
\[
% 关键修改：添加 scale=0.8 和 every node/.style={scale=0.8}
% 缩放比例自己改：0.7=70%，0.8=80%，0.9=90%
\begin{tikzpicture}[
    baseline=(diag.center), 
    column sep= small , 
    row sep = tiny,
    scale=0.85,            % 整体缩放图形
    every node/.style={scale=0.85} % 同步缩放所有文字/公式
]
% 1. draw diagram
\node (diag) at (0,0) {
$\begin{tikzcd}
	{0 } & {\mathcal{W}_2^{\mathcal{U}}(X,\lambda) } & {\underset{\dim(\sigma)=0}{\bigoplus}\, \mathcal{W}_2\left( \mathrm{St}(\sigma,K),\lambda\right)} && \\
	{0 } & {\mathcal{W}_1^{\mathcal{U}}(X,\lambda) } & {\underset{\dim(\sigma)=0}{\bigoplus}\, \mathcal{W}_1\left( \mathrm{St}(\sigma,K),\lambda\right)} \\
	{0 } & {\mathcal{W}_0^{\mathcal{U}}(X,\lambda)} & {\underset{\dim(\sigma)=0}{\bigoplus}\, \mathcal{W}_0\left( \mathrm{St}(\sigma,K),\lambda\right)} & {\underset{\dim(\sigma)=1}{\bigoplus} \mathcal{W}_0\left(\mathrm{St}(\sigma,K), \lambda \right)} & {\underset{\dim(\sigma)=2}{\bigoplus} \mathcal{W}_0\left(\mathrm{St}(\sigma,K),\lambda \right)} \\
	&& {C_0(K) } & {C_1(K) } & {C_2(K)} \\
	&& {0 } & {0 } & {0 }
	\arrow[from=1-2, to=1-1]
	\arrow["\scalebox{1.4}{$\varepsilon$}"',from=1-3, to=1-2]
	\arrow[from=2-2, to=2-1]
	\arrow["\scalebox{1.4}{$\varepsilon$}"',from=2-3, to=2-2]
	\arrow[from=3-2, to=3-1]
	\arrow["\scalebox{1.4}{$\varepsilon$}"',from=3-3, to=3-2]
	\arrow["\scalebox{1.4}{$\eta$}"',from=3-3, to=4-3]
	\arrow["\scalebox{1.4}{$\eta$}"',from=3-4, to=4-4]
	\arrow["\scalebox{1.4}{$\eta$}"',from=3-5, to=4-5]
	\arrow[from=4-3, to=5-3]
	\arrow[from=4-4, to=5-4]
	\arrow[from=4-5, to=5-5]
	\arrow["\scalebox{1.4}{$\partial^{\lambda}$}"',from=1-2, to=2-2]
	\arrow["\scalebox{1.4}{$\partial^{\lambda}$}"',from=2-2, to=3-2]
	\arrow["\scalebox{1.4}{$\partial^{\lambda}$}"',from=1-3, to=2-3]
	\arrow["\scalebox{1.4}{$\partial^{\lambda}$}"',from=2-3, to=3-3]
	\arrow["\scalebox{1.4}{$d$}"',from=3-4, to=3-3]
	\arrow["\scalebox{1.4}{$d$}"',from=3-5, to=3-4]
	\arrow["\scalebox{1.4}{$\partial^{\mu}$}"',from=4-4, to=4-3]
	\arrow["\scalebox{1.4}{$\partial^{\mu}$}"',from=4-5, to=4-4]
\end{tikzcd}$
};

% 2. darw lines
\draw[-{Latex[length=1.8mm]}, thick] (-5.04, -0.81) -- (-5.04, 4.0) node[left] {$q$};
\draw[-{Latex[length=1.8mm]}, thick] (-5.04, -0.81) -- (9.26, -0.81) node[below right] {$p$};
\draw[thick] (-0.04, -0.81) -- (-0.04, 3.5);
\draw[thick] (4.9, -0.81) -- (4.9, 3.5);

\end{tikzpicture}
\]
where for each $n$-simplex $\sigma$ of $K$, the map $\eta: \mathcal{W}_0\left( \mathrm{St}(\sigma,K),\lambda\right) \rightarrow C_n(K)$ is defined by: for a weighted singular $0$-simplex $\theta_0: (|\Delta^0|,\lambda_{\xi})
 \rightarrow \left(\mathrm{St}(\sigma,K),\lambda  \right)$,
  \begin{equation} \label{Equ:Def-Eta}
   \eta(\theta_0) = \begin{cases}
\displaystyle  \frac{\xi(\Delta^0)}{\mu(\sigma)}\sigma,  &  \text{if $(X,\lambda)$ is of ascending type}; \\
 \displaystyle  \overset{\ \ }{ \frac{\mu(\sigma)}{\xi(\Delta^0)}}\sigma,  &  \text{if $(X,\lambda)$ is of descending type}.
 \end{cases} 
 \end{equation}
 
 It is easy to check that $\eta\circ d = \partial^{\mu}\circ \eta$. \n
 
\noindent \textbf{Claim:} The coefficient of $\sigma$ in $\eta(\theta_0)$ above is always an integer.\n
 
 Let $x_0 = \theta_0(\Delta^0)\in \mathrm{St}(\sigma,K)$. Then since $\theta_0$ is W-continuous, we have
 \begin{equation} \label{Equ:divide-1}
   \begin{cases}
  \lambda(x_0) \mid \lambda_{\xi}(\Delta^0),  &  \text{if $(X,\lambda)$ is of ascending type}; \\
  \lambda_{\xi}(\Delta^0) \mid   \lambda(x_0),  &  \text{if $(X,\lambda)$ is of descending type}.
 \end{cases}
 \end{equation}
 On the other hand, for any simplex 
  $\sigma'$ of $K$ that contains $\sigma$,  
  $$   \begin{cases}
  \mu(\sigma) \mid \mu(\sigma'),  &  \text{if $\mu$ is ascending}; \\
    \mu(\sigma') \mid \mu(\sigma),  &  \text{if $\mu$ is descending}.
 \end{cases} 
  $$
 Then since $\sigma\subseteq \mathrm{Car}_K(x_0)$, $\lambda(x_0) = \lambda_{\mu}(x_0)= \mu(\mathrm{Car}_K(x_0))$ satisfies:
  \begin{equation} \label{Equ:divide-2}
     \begin{cases}
  \mu(\sigma)\mid \lambda(x_0),  &  \text{if $(X,\lambda)$ is of ascending type}; \\
 \lambda(x_0) \mid \mu(\sigma),  &  \text{if $(X,\lambda)$ is of descending type}.
 \end{cases}
 \end{equation}
  The above claim follows from~\eqref{Equ:divide-1} and~\eqref{Equ:divide-2} immediately.\n

  By Lemma~\ref{Lem:MV-Seq}, every row in the above diagram is exact. So
the total homology of the double complex is isomorphic to the homology of the left column, which is isomorphic to the homology of $(
\mathcal{W}_*(X,\lambda),\partial^{\lambda})$ by Lemma~\ref{Lem:Inclusion-Equiv}. Moreover, Lemma~\ref{Lem:Retract-Barycenter} below tells us that every column of the double complex is exact as well. Then
the total homology of the double complex is isomorphic to the homology of the bottom row, which is exactly the weighted simplicial homology $H_*(K,\partial^{\mu})$.
\end{proof}

\begin{lem} \label{Lem:Retract-Barycenter}
 Let $(K,\mu)$ be a divisibly weighted simplicial complex.
  For any $k$-simplex $\sigma$ of $K$, the following chain complex is exact for $\mathrm{St}(\sigma,K)\subseteq |K|$:
   $$  \cdots  \overset{\partial^{\lambda_{\mu}}}{\longrightarrow}  \mathcal{W}_1\left(\mathrm{St}(\sigma,K),\lambda_{\mu} \right) \overset{\partial^{\lambda_{\mu}}}{\longrightarrow} \mathcal{W}_0\left(\mathrm{St}(\sigma,K),\lambda_{\mu} \right)  \overset{\eta}{\longrightarrow}  C_k(K)\longrightarrow 0. $$
   \end{lem}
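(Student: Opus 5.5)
The plan is to build an explicit contracting homotopy for the augmented complex by coning singular simplices to the barycenter $b_\sigma$ of $\sigma$. This is the weighted analogue of the proof that the singular homology of a star-shaped set is that of a point. Throughout, write $m=\mu(\sigma)$, and note that $C_k(K)$ in the statement is understood as the summand $\Z\sigma$ receiving $\eta$.

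\textbf{Step 1: the geometric input.} For $x\in \mathrm{St}(\sigma,K)$ we have $\sigma\subseteq \mathrm{Car}_K(x)$. Since $b_\sigma\in|\sigma|\subseteq|\mathrm{Car}_K(x)|$ and $x$ lies in the relative interior, every point $(1-t)x+tb_\sigma$ with $t<1$ lies in the relative interior of $|\mathrm{Car}_K(x)|$. Hence it stays in $\mathrm{St}(\sigma,K)$ and has the same weight $\lambda_\mu(x)$, while $b_\sigma$ itself has weight $m$. As in the Claim in the proof of Theorem~\ref{Thm:Main-Isom}, $m\mid \lambda_\mu(x)$ in the ascending case and $\lambda_\mu(x)\mid m$ in the descending case.

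\textbf{Step 2: the cone operator.} Let $\theta:(|\Delta^n|,\lambda_\xi)\to(\mathrm{St}(\sigma,K),\lambda_\mu)$ be a weighted singular simplex. Define $h(\theta)$ to be the map $|\Delta^{n+1}|=|[u,v_0,\dots,v_n]|\to \mathrm{St}(\sigma,K)$ given by $(1-t)y+tu\mapsto (1-t)\theta(y)+tb_\sigma$. The apex $u$ gets weight $m$ and the other vertices keep the weights of $\xi$, so on the face opposite $u$ the weight is $\xi$.

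The new weight $\xi'$ is still divisible. By W-continuity of $\theta$ at the vertices and Step 1, in the ascending case $m\mid\lambda_\mu(\theta(v_i))\mid\xi(v_i)$, so $u$ can be put at the bottom of the divisibility chain. In the descending case the divisibilities reverse and $u$ goes at the top. In both cases $\xi'(\Delta^{n+1})=\xi(\Delta^n)$ when $n\geq 0$.

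W-continuity of $h(\theta)$ follows from Step 1. A point in the open cone over a face $\tau$ has weight $\xi(\tau)$ (ascending max, descending min, taken together with $m$, which does not change it), and its image has weight $\lambda_\mu(\theta(y))$, which is compatible by the W-continuity of $\theta$. At the apex the weights coincide. I would also need to handle the orientation so that $u$ is the $0$-th vertex in the face-coefficient formula; using $u$ as the $0$-th vertex but the divisibility order only for checking divisibility seems fine, since $c^{\xi}_j$ depends only on weights of faces.

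\textbf{Step 3: the homotopy identity.} By \eqref{Equ:Bd-Sing}, the face of $h(\theta)$ opposite $u$ is $\theta$, with coefficient $\xi'(\Delta^{n+1})/\xi(\Delta^n)=1$ (or its inverse). The face obtained by deleting $v_j$ is $h(\theta|_{\partial_j\Delta^n})$, with coefficient $c^{\xi}_j(\Delta^n)$, because the cone does not change the weights of faces containing some $v_i$. Hence $\partial^{\lambda_\mu}h\theta=\theta-h\,\partial^{\lambda_\mu}\theta$ for $n\geq1$.

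For $n=0$ the second face is the constant simplex $s$ at $b_\sigma$ with weight $m$, and its coefficient is $\xi(\Delta^0)/m$ (ascending) or $m/\xi(\Delta^0)$ (descending). Defining $s:\Z\sigma\to\mathcal{W}_0$ by $\sigma\mapsto s$, this gives $\partial h\theta_0=\theta_0-s\,\eta(\theta_0)$ by \eqref{Equ:Def-Eta}. Moreover $\eta\circ s=\mathrm{id}$.

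Thus $(s,h)$ is a contracting homotopy of the augmented complex, which is therefore exact. This includes surjectivity of $\eta$ and the fact that $\eta\circ\partial^{\lambda_\mu}=0$. The latter I would verify directly: for a $1$-simplex with divisible weights the two coefficients of $\eta$ agree after multiplying by the $c_j$.

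The main obstacle is the bookkeeping in Step 2: showing that the coned weight remains divisible and that the cone map is W-continuous on every stratum of $|\Delta^{n+1}|$. This is exactly where the assumption $\theta(\Delta^n)\subseteq\mathrm{St}(\sigma,K)$, through Step 1, is used.
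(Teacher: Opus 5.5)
Your proof is correct, but it takes a different route from the paper's. The paper uses the observation of your Step~1 only to show that the straight-line deformation retraction of $\mathrm{St}(\sigma,K)$ onto $b_\sigma$ is W-continuous. Hence $(\mathrm{St}(\sigma,K),\lambda_\mu)$ is W-homotopy equivalent to the weighted point $(b_\sigma,\mu(\sigma))$, and vanishing in positive degrees is left to the homotopy-invariance proposition. The only explicit computation is in degree $0$: surjectivity of $\eta$ comes from $\theta_{b_\sigma}$, and $\ker\eta\subseteq\mathrm{Im}\,\partial^{\lambda}$ from the segments $\theta^i_1$ joining $b_\sigma$ to $\theta^i_0$. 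Those segments and $\theta_{b_\sigma}$ are exactly your $h$ and $s$ in degree $0$; you extend the coning to all degrees and obtain an explicit contracting homotopy.

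What your version buys is self-containedness. It needs neither the homotopy-invariance proposition nor the fact, tacit in the paper, that the weighted point $(b_\sigma,\mu(\sigma))$ has no weighted singular homology in positive degrees. That fact is not automatic, since the point carries a singular simplex for every compatible divisible weight on $\Delta^n$, and its proof is precisely your cone with apex weight $\mu(\sigma)$. Your reading of the target as the summand $\Z\sigma\subseteq C_k(K)$ is also what the paper implicitly uses.

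As a bonus, your cone $h\theta$ is weight-preserving whenever $\theta$ is: the apex goes to $b_\sigma$ with weight $\mu(\sigma)$, and the open segments keep their weight. So the same argument also gives the $\mathcal{WP}$ analogue needed for Theorem~\ref{thm:iso-weight-preserving}. There your apex genuinely lies in $\mathrm{St}(\sigma,K)=\bigcap_{v\in V(\sigma)}\mathrm{St}(v,K)$, which a vertex of $\sigma$ does not when $k\geq 1$.
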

\begin{proof}
For brevity, let $\lambda=\lambda_{\mu}$. First, 
we claim that $(\mathrm{St}(\sigma,K), \lambda)$ is W-homotopy equivalent to the weighted $0$-simplex $(b_{\sigma}, \lambda|_{b_{\sigma}})$, where $b_{\sigma}$ is the barycenter of $\sigma$. This is because
for any point $x\in \mathrm{St}(\sigma,K)$, by~\eqref{Equ:divide-2} we have
  $$ \begin{cases}
  \lambda(b_{\sigma}) \mid \lambda(x),  &  \text{if $(X,\lambda)$ is of ascending type}; \\
 \lambda(x) \mid \lambda(b_{\sigma}),  &  \text{if $(X,\lambda)$ is of descending type}.
 \end{cases} $$
   So the deformation retraction of $\mathrm{St}(\sigma,K)$ to $b_{\sigma}$ along the line segment from
   any point $x\in \mathrm{St}(\sigma,K)$ to $b_{\sigma}$ is 
   W-continuous. Therefore, the constant map 
  $\mathrm{St}(\sigma,K) \rightarrow b_{\sigma}$
  is a W-homotopy equivalence.    
    In the following, we also consider
   $(b_{\sigma}, \lambda|_{b_{\sigma}})$ as a weighted singular $0$-simplex of $(\mathrm{St}(\sigma,K), \lambda)$, denoted by $\theta_{b_{\sigma}}$.\n
   
  The map $\eta$ is surjective since $\eta(\theta_{b_{\sigma}}) = \sigma$. So it remains to prove 
   $$\mathrm{ker}(\eta) = \mathrm{Im}(\partial^{\lambda}) \ \text{at}\ \mathcal{W}_0\left(\mathrm{St}(\sigma,K),\lambda \right).$$
   Assume $(K,\mu)$ is of ascending type in the rest of the proof (the descending type case is parallel).
    For a weighted singular $1$-simplex 
   $\theta_1: (|\Delta^1|, \lambda_{\widetilde{\xi}}) \rightarrow \left(\mathrm{St}(\sigma,K),\lambda \right)$ where
   $\Delta^1=[v_0, v_1]$, by definition~\eqref{Equ:Bd-Sing}
   $$ \partial^{\lambda}(\theta_1) = \frac{\widetilde{\xi}(\Delta^1)}{\widetilde{\xi}(v_1)}\theta_1|_{v_1} - \frac{\widetilde{\xi}(\Delta^1)}{\widetilde{\xi}(v_0)}\theta_1|_{v_0} \in \mathcal{W}_0\left(\mathrm{St}(\sigma,K),\lambda \right). $$
   Then by the definition of $\eta$ in~\eqref{Equ:Def-Eta},
 $$ \eta\circ \partial^{\lambda}(\theta_1) =
 \frac{\widetilde{\xi}(\Delta^1)}{\widetilde{\xi}(v_1)} \frac{\widetilde{\xi}(v_1)}{\mu(\sigma)} - \frac{\widetilde{\xi}(\Delta^1)}{\widetilde{\xi}(v_0)} \frac{\widetilde{\xi}(v_0)}{\mu(\sigma)}=0.  $$
 This implies $\mathrm{Im}(\partial^{\lambda})\subseteq 
 \mathrm{ker}(\eta)$. 
 Conversely, if a chain $\alpha=\sum_i k_i\theta^{i}_0$ is 
 in $\mathrm{ker}(\eta)$ where  $\theta^i_0: (|\Delta^0|,\lambda_{\xi_i})
 \rightarrow \left(\mathrm{St}(\sigma,K),\lambda  \right)$ are weighted singular $0$-simplices, then
 \begin{equation} \label{Equ:Sum-0}
  \eta(\alpha) =  \sum_i k_i \frac{\xi_i(\Delta^0)}{\mu(\sigma)}\sigma =0 \ \Longrightarrow \  \sum_i k_i \frac{\xi_i(\Delta^0)}{\mu(\sigma)}=0. 
  \end{equation}
 
 Moreover, since $\theta^i_0(\Delta^0)$ lies in $\mathrm{St}(\sigma,K)$, the line segment between $b_{\sigma}$ and $\theta^i_0(\Delta^0)$ determines a unique weighted singular $1$-simplex $\theta^i_1: (|\Delta^1|,\lambda_{\widetilde{\xi}_i})
 \rightarrow \left(\mathrm{St}(\sigma,K),\lambda  \right)$
 where $ \widetilde{\xi}_i$ is an ascending type divisible weight on $\Delta^1=[v_0, v_1]$ defined by
 $$ \widetilde{\xi}_i (v_0) =\lambda(b_{\sigma})=\mu(\sigma), \ \ \widetilde{\xi}_i (v_1) = \xi_i(\Delta^0). $$
Then $\theta^i_1|_{v_0}=\theta_{b_{\sigma}}$, $\theta^i_1|_{v_1} =\theta^i_0$ and $\widetilde{\xi}_i(\Delta^1)= \widetilde{\xi}_i (v_1)= \xi_i(\Delta^0)$.
So we have
 $$ \partial^{\lambda}(\theta^i_1) =  \theta^i_1|_{v_1} - \frac{\xi_i(\Delta^0)}{\mu(\sigma)}\theta^i_1|_{v_0} =\theta^i_0- \frac{\xi_i(\Delta^0)}{\mu(\sigma)} \theta_{b_{\sigma}}. $$
 $$ \Longrightarrow \  \partial^{\lambda}\Big(\sum_i k_i \theta^i_1 \Big) = \sum_i k_i\theta^{i}_0 - \sum_i k_i \frac{\xi_i(\Delta^0)}{\mu(\sigma)} \theta_{b_{\sigma}}\overset{\eqref{Equ:Sum-0}}{=} \alpha. \qquad $$
So $ 
 \mathrm{ker}(\eta) \subseteq \mathrm{Im}(\partial^{\lambda}) $ and the lemma is proved.
    Note that the lemma may fail if $(K,\mu)$ is not divisibly weighted.
\end{proof}

Let $(K,\mu)$ be a divisibly weighted triangulation of a weighted polyhedron $(X,\lambda)$.  For any $n$-simplex $\sigma$ of $K$, the characteristic map
$\theta^{\sigma}: |\Delta^n|\rightarrow |K|\cong X$ of
 $\sigma$
canonically determines a weighted singular $n$-simplex of $(X,\lambda)$, denoted by 
$$\theta^{\sigma}: (|\Delta^n|,\lambda_{\mu,\sigma}) \rightarrow (X,\lambda),$$
where $\lambda_{\mu,\sigma}$ is the pull-back of the weight function $\lambda_{\mu}$ to $\Delta^n$ by $\theta^{\sigma}$ .
 Then we obtain a linear map
 \begin{equation} \label{Equ:Def-Xi} 
 \Xi^K : \big( C_*(K),\partial^{\mu} \big) \rightarrow \big(\mathcal{W}_*(X,\lambda),\partial^{\lambda} \big),
 \ \sigma\mapsto \theta^{\sigma}.
 \end{equation}
 
 It is clear that $\Xi^K$ is a chain map and hence induces
 a homomorphism
 \begin{equation} \label{Equ:Xi-star}
   \Xi^K_*: H_*\big( C_*(K),\partial^{\mu} \big) \rightarrow H_*\big(\mathcal{W}_*(X,\lambda), \partial^{\lambda} \big)=\mathcal{H}^{W}_*(X,\lambda).
   \end{equation}

  \begin{figure}[h]
          % Requires \usepackage{graphicx}
         \includegraphics[width=0.6\textwidth]{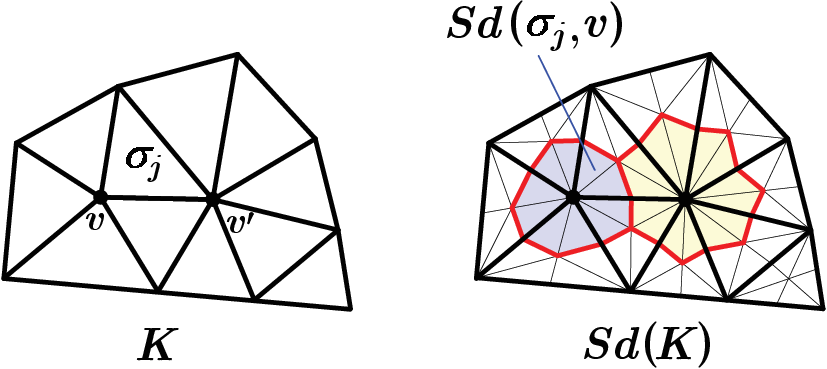}\\
          \caption{Barycentric Subdivision of $K$}\label{p:Barycentric-Subdivision}
      \end{figure}

\begin{thm} \label{Thm:Isom-Char} 
  The isomorphism from
$H_*(K,\partial^{\mu})$ to $\mathcal{H}^{W}_*(X,\lambda)$
 in Theorem~\ref{Thm:Main-Isom}
agrees with $\Xi^K_*$.
\end{thm}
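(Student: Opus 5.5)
Since both augmentations are exact, the isomorphism of Theorem~\ref{Thm:Main-Isom} is the usual zig-zag through the double complex. Take a cycle $z\in C_n(K)$ with $\partial^{\mu}z=0$. Lift $z$ along the column augmentation $\eta$ to an element $a_{n,0}\in\bigoplus_{\dim\sigma=n}\mathcal{W}_0(\mathrm{St}(\sigma,K),\lambda)$. Then alternately apply $d$ and solve with $\partial^{\lambda}$, using exactness of the colums from Lemma~\ref{Lem:Retract-Barycenter}. This produces elements $a_{n-p,p}$ in position $(n-p,p)$ with $d a_{n-p,p}=\pm\partial^{\lambda}a_{n-p-1,p+1}$. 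The process ends with $a_{0,n}\in\bigoplus_{v}\mathcal{W}_n(\mathrm{St}(v,K),\lambda)$, and the image of $[z]$ is $[\varepsilon(a_{0,n})]\in\mathcal{H}^W_n(X,\lambda)$. Because the result is independent of all choices made, the plan is to make explicit geometric choices and check that $\varepsilon(a_{0,n})$ is homologous to $\Xi^K(z)$. The last comparison uses Lemma~\ref{Lem:Inclusion-Equiv}; alternatively one can show it equals $\Xi^{Sd(K)}(Sd_{\#}z)$ and compare that with $\Xi^K(z)$.

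\textbf{Choices.} For each $n$-simplex $\sigma$, let $a_{n,0}$ place at $\sigma$ the weighted $0$-simplex $\theta_{b_\sigma}$ at the barycenter, with weight $\mu(\sigma)$, so that $\eta(\theta_{b_\sigma})=\sigma$. In general, the component of $a_{n-p,p}$ at a $(n-p)$-face $\tau$ of $\sigma$ should be the signed chain $Sd_{\#}(\sigma,\tau)$ of~\eqref{Equ:Sd-sigma-tau}. This is a sum of $p$-simplices $[b_{\tau}\cdots b_{\sigma}]$ of $Sd(\sigma)$, and each lies in $\mathrm{St}(\tau,K)$ because its points lie in the interiors of simplices containing $\tau$. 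It is regarded as a weighted singular chain via the characteristic maps, with the divisible weights $Sd(\mu)$; the ordering $\mu(\tau)\mid\cdots\mid\mu(\sigma)$ in the ascending case makes these W-continuous. With the weights induced by $Sd(\mu)$ one then checks the identity
\[
\partial^{\lambda} Sd_{\#}(\sigma,\tau)=\sum_{\tau\subsetneq\tau'\subseteq\sigma,\ \dim\tau'=\dim\tau+1}\pm\, c\cdot Sd_{\#}(\sigma,\tau')\;+\;(\text{terms cancelled via } \partial^{\mu}z=0).
\]
Here $c$ is the appropriate weight ratio $c^{\mu}_j$. The point is that the faces of $[b_\tau\cdots b_\sigma]$ obtained by deleting $b_\tau$ reassemble into $Sd_{\#}(\sigma,\tau')$, and the remaining interior faces cancel in pairs as in the unweighted case. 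The weight coefficients match by the computation in Lemma~\ref{Lem:Sd-Weight-Chain}, using~\eqref{Equ:Subdiv-w} and~\eqref{Equ-Sd-n-simplex}. The faces omitting $b_\sigma$ are exactly where $\partial^{\mu}z=0$ is needed: there the contributions from the different $\sigma$ summed in $z$ must cancel.

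\textbf{Conclusion.} At the final stage, the component of $a_{0,n}$ at a vertex $v$ consists of the $n$-simplices of $Sd(z)$ that contain $v$ as a vertex of $K$. Applying $\varepsilon$ gives exactly $\sum_\sigma z_\sigma\,\Xi^{Sd(K)}(Sd_{\#}\sigma)$, since each top simplex of $Sd(\sigma)$ contains exactly one original vertex, namely $b_{\sigma_0}$. Finally, $\Xi^{Sd(K)}Sd_{\#}$ is chain homotopic to $\Xi^K$ on $\mathcal{W}_*$. This is the weighted singular subdivision operator of the appendix proof of Lemma~\ref{Lem:Inclusion-Equiv}, whose construction yields a chain homotopy between subdivision and the identity.

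\textbf{Main obstacle.} The hardest step is the sign and weight bookkeeping in the displayed identity: verifying that the chosen $a_{n-p,p}$ really satisfy the zig-zag relations with the signs $\varepsilon(\sigma_0,\dots,\sigma_l)$ and the weighted coefficients. I would first verify it for $n=1,2$ to fix the sign conventions, then argue inductively using the cone description $Sd_{\#}(\sigma,\tau)=b_\sigma\cdot(\cdots)$ analogous to~\eqref{Equ:Sd-simple}.
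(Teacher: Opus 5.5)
Your proposal is essentially the paper's proof: the same explicit chains $Sd_{\#}(\sigma,\tau)$ at each face, the same identification of the last stage with $\Xi^{Sd(K)}(Sd_{\#}z)$, and the same appeal to the appendix subdivision homotopy; the only difference is that you run the zig-zag from the bottom row rather than from the left column, which is immaterial. One correction to your displayed identity: the coefficient of $Sd_{\#}(\sigma,\tau')$ is exactly $\pm 1$, not a weight ratio, because deleting $b_{\tau}$ from $[b_{\tau}\cdots b_{\sigma}]$ leaves the weight $\mu(\sigma)$ unchanged (as it must, since the \v{C}ech map $d$ carries no weights); the ratios $c^{\mu}_j(\sigma)$ appear only on the faces omitting $b_{\sigma}$, which is precisely why those terms assemble into a subdivided copy of $\partial^{\mu}z$ and vanish.
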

 \begin{proof}
 Let $\alpha = \sum^k_{j=1} \sigma_j$ be an $n$-cycle
 in $ \big( C_*(K),\partial^{\mu} \big)$ where $\sigma_j$
 and $\sigma_{j'}$ are allowed to be the same simplex.   
 So by definition, 
 $\Xi^K(\alpha)= \sum^k_{j=1}  \theta^{\sigma_j}$.
   Consider the barycentric subdivision of $K$ and choose 
  an open cover $\mathcal{U}$ of $X$ defined by~\eqref{Equ:U-Cover}.
  By the proof of Lemma~\ref{Lem:Inclusion-Equiv} in the appendix, we can identify the homology class $[\Xi^K(\alpha)]\in \mathcal{H}^{W}_*(X,\lambda)$ with the homology class of the
  following $\mathcal{U}$-small chain:
   $$  \beta_{-1} := \Xi^{Sd(K)} \Big( \sum^k_{j=1}  Sd_{\#}(\sigma_j) \Big) .$$  
   A preimage of $\beta_{-1}$
   under $\varepsilon$ is distributing its components to the vertices of $K$ that appear in the expression of $\alpha$, that is (see Figure~\ref{p:Barycentric-Subdivision}):
      $$ \beta_0 := \bigg( \Xi^{Sd(K)} \Big(\underset{1\leq j \leq k}{\sum_{v\in \sigma_j}} Sd_{\#}(\sigma_j,v) \Big) \bigg)_{v} \in \underset{\dim(\sigma)=0}{\bigoplus}\, \mathcal{W}_n\left( \mathrm{St}(\sigma,K),\lambda\right),$$
    where the notation $Sd_{\#}(\sigma_j,v)$ is defined in~\eqref{Equ:Sd-sigma-tau}. \n
   
   Since $\alpha$ is a cycle, by our construction $\partial^{\lambda}(\beta_{-1})=0$ and hence
   $\varepsilon \circ \partial^{\lambda}(\beta_0) =0$.
   So in the horizontal direction $\partial^{\lambda}(\beta_0)$ is a boundary chain with respect to $d$. 
 Let
   $$\beta_1:= \bigg( \Xi^{Sd(K)} \Big(\underset{1\leq j \leq k}{\sum_{\overline{vv'}\subseteq \sigma_j}} Sd_{\#}(\sigma_j, \overline{vv'}) \Big) \bigg)_{\overline{vv'}} \in \underset{\dim(\sigma)=1}{\bigoplus}\, \mathcal{W}_{n-1}\left( \mathrm{St}(\sigma,K),\lambda\right),$$
      It is easy to check that 
    $$\partial^{\lambda}(\beta_0) =d(\beta_1), \  \  
    d\circ \partial^{\lambda}(\beta_1) =0.$$
    By iterating the above argument, we obtain a chain
    for each $0\leq s \leq n$ as follows:
      $$\beta_s:= \bigg( \Xi^{Sd(K)} \Big(\underset{1\leq j \leq k}{\sum_{\tau\subseteq \sigma_j}} Sd_{\#}(\sigma_j, \tau) \Big) \bigg)_{\dim(\tau)=s} \in \underset{\mathrm{dim}(\sigma)=s}{\bigoplus}\, \mathcal{W}_{n-s}\left( \mathrm{St}(\sigma,K),\lambda\right), $$
  which satisfies $\partial^{\lambda}(\beta_{s-1}) =d(\beta_s)$ and $  
    d \circ \partial^{\lambda}(\beta_s) =0$ (see Figure~\ref{p:Isom-Homology}).
    In particular, the $0$-chain $\beta_n$ is nothing but the sum of all the barycenters of $\sigma_j$. So by definition, $\eta(\beta_n)=\alpha$. This implies that
    $\Xi_*([\alpha])$ is the isomorphic image of the homology class $[\alpha]$
   under the isomorphism given in Theorem~\ref{Thm:Main-Isom}.   
 \end{proof}
            
  \begin{figure}[h]
          % Requires \usepackage{graphicx}
         \includegraphics[width=0.44\textwidth]{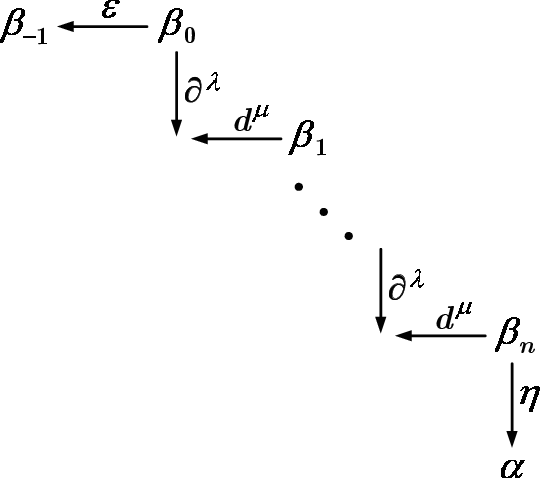}\\
          \caption{Isomorphism between weighted simplicial homology and weighted singular homology}\label{p:Isom-Homology}
      \end{figure}

\subsection{Weight-preserving singular chains}
\ \n
 Let $(\mathcal{WP}_*(X, \lambda), \partial^{\lambda})$ 
 denote the subchain complex of $(\mathcal{W}_*(X, \lambda), \partial^{\lambda})$ generated by all the weighted singular simplices 
$\theta:(|\Delta^n|, \lambda_{\xi}) \to (X, \lambda)$, $n\geq 0$,
which are weight-preserving. We call elements in $\mathcal{WP}_*(X, \lambda)$ the \emph{weight-preserving singular chains} of $(X, \lambda)$. Moreover, denote the homology groups of $(\mathcal{WP}_*(X, \lambda), \partial^{\lambda})$ by $\mathcal{H}_*^{WP}(X, \lambda)$. \n

Let $i: (\mathcal{WP}_*(X, \lambda), \partial^{\lambda}) \to (\mathcal{W}_*(X, \lambda), \partial^{\lambda})$ denote the inclusion map. 
Note that the map $\Xi^K$ defined in \eqref{Equ:Def-Xi}  sends a simplex $\sigma\in K$ to a weighted singular simplex $\theta^{\sigma}$ which is actually weight-preserving. 
Hence, $\Xi^K$ factors as 
$\Xi^K = i\circ \widetilde{\Xi}^K$ where $\widetilde{\Xi}^K:  \big( C_*(K),\partial^{\mu} \big) \rightarrow \big(\mathcal{WP}_*(X,\lambda),\partial^{\lambda} \big)
 $ is defined in the same manner as $\Xi^K$. 

\begin{thm}\label{thm:iso-weight-preserving}
  If $(K,\mu)$ is a divisibly weighted triangulation of $(X,\lambda)$, then the map $\widetilde{\Xi}^K$ induces an isomorphism $\widetilde{\Xi}^K_*$ from  $H_*(K, \mu)$ to $\mathcal{H}_*^{WP}(X, \lambda)$. So the inclusion $i: (\mathcal{WP}_*(X, \lambda), \partial^{\lambda}) \to (\mathcal{W}_*(X, \lambda), \partial^{\lambda})$ induces an isomorphism from $\mathcal{H}_*^{WP}(X, \lambda)$ to  $\mathcal{H}_*^{W}(X, \lambda)$.
\end{thm}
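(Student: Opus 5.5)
The plan is to rerun the proof of Theorem~\ref{Thm:Main-Isom} with every weighted singular chain group replaced by its weight-preserving subgroup, and then invoke the identification of the isomorphism in Theorem~\ref{Thm:Isom-Char}. Define $\mathcal{WP}^{\mathcal{U}}_*(X,\lambda)$ as the $\mathcal{U}$-small weight-preserving chains. Then build the augmented double complex with entries $\bigoplus_{\dim\sigma=p}\mathcal{WP}_q(\mathrm{St}(\sigma,K),\lambda)$, the Čech differential $d$, the augmentation $\varepsilon$, and the map $\eta$ restricted to weight-preserving $0$-simplices.

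Three ingredients must be rechecked in this setting.

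First, the analogue of Lemma~\ref{Lem:Inclusion-Equiv}. Barycentric subdivision of a weight-preserving singular simplex $\theta:(|\Delta^n|,\lambda_\xi)\to(X,\lambda)$ uses the subdivided weight $Sd(\xi)$. By Lemma~\ref{Lem:Barycen-Sub-Iso-Weight}, the pieces are again weight-preserving. The chain homotopy in the appendix is built from cones and subdivisions, so the same construction should preserve $\mathcal{WP}_*$.

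Second, exactness of the rows. The Mayer--Vietoris argument of Lemma~\ref{Lem:MV-Seq} only redistributes simplices among open sets containing their images. It is therefore natural on any subfamily of singular simplices closed under restriction to opened sets, and $\mathcal{WP}$ is such a subfamily.

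Third, the analogue of Lemma~\ref{Lem:Retract-Barycenter}, that each column is exact. For a weight-preserving $0$-simplex $\theta_0$ at $x_0$, we have $\xi(\Delta^0)=\lambda(x_0)$. The segment from $b_\sigma$ to $x_0$ carries the weight $\widetilde\xi$ with $\widetilde\xi(v_0)=\mu(\sigma)$ and $\widetilde\xi(v_1)=\lambda(x_0)$. We must check that this $1$-simplex is weight-preserving. Points of the open segment lie in the relative interior of $\mathrm{Car}_K(x_0)$, or they equal $b_\sigma$. Here $\mathrm{Car}_K(x_0)\supseteq\sigma$, because $x_0\in\mathrm{St}(\sigma,K)$ and the open star is convex along such rays. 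So every point of the half-open segment $(b_\sigma,x_0]$ has weight $\lambda(x_0)$, while $b_\sigma$ has weight $\mu(\sigma)$. This matches $\lambda_{\widetilde\xi}$ exactly, so $\theta^i_1$ is weight-preserving and the argument goes through verbatim. Surjectivity of $\eta$ uses $\theta_{b_\sigma}$, which is weight-preserving.

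With these three facts, the double complex argument shows that $H_*(K,\partial^\mu)\cong\mathcal{H}^{WP}_*(X,\lambda)$. The chase in the proof of Theorem~\ref{Thm:Isom-Char} uses only the chains $\Xi^{Sd(K)}(Sd_\#(\sigma_j,\tau))$, which are weight-preserving by Lemma~\ref{Lem:Barycen-Sub-Iso-Weight}. Hence the same chase identifies this isomorphism with $\widetilde\Xi^K_*$. Since $\Xi^K=i\circ\widetilde\Xi^K$, and both $\Xi^K_*$ and $\widetilde\Xi^K_*$ are isomorphisms, $i_*=\Xi^K_*\circ(\widetilde\Xi^K_*)^{-1}$ is an isomorphism.

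I expect the main obstacle to be the first ingredient. The appendix proof of Lemma~\ref{Lem:Inclusion-Equiv} presumably uses cone operators $b\cdot(-)$ on the model simplex with a chosen weight on the cone point. One must verify that the induced weights on all the simplices of $S$ and of the homotopy $T$ agree with the pulled-back $\lambda$. I would handle this by always weighting a new barycenter of a face $\tau$ by $\xi(\tau)$, as in Definition~\ref{Def:Bary-Subdiv-Weighted}, so that each piece carries the pullback of $\lambda_\xi$ and hence of $\lambda$.
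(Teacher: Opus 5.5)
Your overall route is the paper's: rerun the double complex of Theorem~\ref{Thm:Main-Isom} with $\mathcal{WP}_*$ in place of $\mathcal{W}_*$, reuse the chase of Theorem~\ref{Thm:Isom-Char}, and conclude $i_*=\Xi^K_*\circ(\widetilde{\Xi}^K_*)^{-1}$. Your first two ingredients and your degree-$0$ check are fine; the paper, too, simply asserts that Lemma~\ref{Lem:Inclusion-Equiv} holds for $\mathcal{WP}_*$.

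The gap is in the third ingredient. You never prove exactness of the columns in positive degrees, i.e. $\mathcal{H}^{WP}_n(\mathrm{St}(\sigma,K),\lambda)=0$ for $n\geq 1$. This does not go through verbatim from Lemma~\ref{Lem:Retract-Barycenter}. There it comes from the W-homotopy equivalence $\mathrm{St}(\sigma,K)\simeq b_\sigma$ given by $(x,t)\mapsto(1-t)x+tb_\sigma$, and this homotopy is not weight-preserving: at $t=1$ a point of weight $\lambda(x)$ goes to $b_\sigma$, whose weight is $\mu(\sigma)$. So the retraction, and the prism operator built from it, send weight-preserving simplices to non-weight-preserving ones. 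The homotopy-invariance argument therefore cannot be run inside $\mathcal{WP}_*$. This is exactly the one new ingredient the paper's proof supplies, and it, rather than the subdivision lemma, is the real obstacle.

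The missing idea is a weight-preserving cone operator. It is just the all-degree version of the segment you already use in degree $0$.
\begin{itemize}
\item Given a weight-preserving $\theta:(|\Delta^n|,\lambda_\xi)\to(\mathrm{St}(\sigma,K),\lambda)$, cone it linearly from $b_\sigma$ and give the new vertex $v_0$ the weight $\mu(\sigma)$.
\item Each $\theta(v_i)$ has carrier containing $\sigma$. Hence $\mu(\sigma)$ divides every $\xi(v_i)$ in the ascending case and is divisible by every $\xi(v_i)$ in the descending case. So the new weight is divisible, with $\mu(\sigma)$ at the extreme end of every chain.
\item The points of $(b_\sigma,\theta(y)]$ lie in the relative interior of $\mathrm{Car}_K(\theta(y))$, so the cone is weight-preserving.
\item The face opposite $v_0$ gets coefficient $1$, while the face $\{v_0\}\cup\partial_{j-1}\Delta^n$ gets $c^{\xi}_{j-1}(\Delta^n)$. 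Hence $\partial^{\lambda}(b_\sigma\cdot\theta)=\theta-b_\sigma\cdot\partial^{\lambda}\theta$, and every positive-degree cycle bounds.
\end{itemize}

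The paper runs the same computation but cones from a vertex $p$ of $\sigma$ of minimal weight. Coning from $b_\sigma$ is the safer choice. The \v{C}ech argument needs $\mathrm{St}(\sigma,K)=\bigcap_{v\in V(\sigma)}\mathrm{St}(v,K)$, and $b_\sigma$ lies in this set, whereas a vertex of $\sigma$ does not once $\dim\sigma\geq 1$.
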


The proof of Theorem~\ref{thm:iso-weight-preserving} is almost identical to the proof of Theorem~\ref{Thm:Isom-Char} by replacing $\mathcal{W}_*(X, \lambda)$ by $\mathcal{WP}_*(X, \lambda)$ throughout (note that Lemma~\ref{Lem:Inclusion-Equiv} also holds for
$\mathcal{WP}_*(X, \lambda)$). The only new argument we need to give is establishing the counterpart of Lemma~\ref{Lem:Retract-Barycenter}. This is because the deformation retraction from $\mathrm{St}(\sigma,K)$ to $b_{\sigma}$ may not be weight-preserving, hence may not
induce a map on $ \mathcal{WP}_*\left(\mathrm{St}(\sigma,K),\lambda_{\mu} \right)$. The following lemma is
what we need for Theorem~\ref{thm:iso-weight-preserving}.

\begin{lem}
    Let $(K, \mu)$ be a divisibly weighted simplicial complex. For any $k$-simplex $\sigma$ of $K$, 
    the following chain complex is exact for $\mathrm{St}(\sigma,K)\subseteq |K|$:
   $$  \cdots  \overset{\partial^{\lambda_{\mu}}}{\longrightarrow}  \mathcal{WP}_1\left(\mathrm{St}(\sigma,K),\lambda_{\mu} \right) \overset{\partial^{\lambda_{\mu}}}{\longrightarrow} \mathcal{WP}_0\left(\mathrm{St}(\sigma,K),\lambda_{\mu} \right)  \overset{\eta}{\longrightarrow}  C_k(K)\longrightarrow 0. $$    
\end{lem}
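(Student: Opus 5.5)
The plan is to imitate the proof of Lemma~\ref{Lem:Retract-Barycenter}, but to replace the W-homotopy equivalence (the straight-line deformation retraction, which need not be weight-preserving) by an explicit \emph{cone construction} toward the barycenter $b_{\sigma}$. The cone will send weight-preserving simplices to weight-preserving simplices, and it yields a chain contraction directly. Write $\lambda=\lambda_{\mu}$ and treat the ascending case; the descending case follows by reversing all divisibilities.

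The first step is a geometric observation. Take $x\in \mathrm{St}(\sigma,K)$ with carrier $\tau=\mathrm{Car}_K(x)\supseteq\sigma$. Then $b_{\sigma}\in|\tau|$, and every point $(1-t)x+tb_{\sigma}$ with $0\le t<1$ lies in the relative interior of $|\tau|$. Hence $\lambda$ equals $\lambda(x)=\mu(\tau)$ along the half-open segment, and equals $\mu(\sigma)$ at the endpoint $b_{\sigma}$. By \eqref{Equ:divide-2}, $\mu(\sigma)\mid\lambda(x)$. In particular the whole segment lies in $\mathrm{St}(\sigma,K)$.

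Next, define the cone operator. For a weight-preserving singular simplex $\theta:(|\Delta^n|,\lambda_{\xi})\to(\mathrm{St}(\sigma,K),\lambda)$, let $\Delta^{n+1}=[a,v_0,\dots,v_n]$ and set
\[
C\theta\big((1-t)y+ta\big)=(1-t)\theta(y)+tb_{\sigma}.
\]
Equip $\Delta^{n+1}$ with the weight $\xi'(a)=\mu(\sigma)$, $\xi'(a\cdot\tau')=\xi(\tau')$ and $\xi'(\tau')=\xi(\tau')$ for faces $\tau'$ of $\Delta^n$. Three checks are needed.
\begin{itemize}
\item[(a)] $\xi'$ is an ascending divisible weight. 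Placing $a$ first in the vertex order works, because each $\xi(\tau')$ is a multiple of $\mu(\sigma)$: it equals $\lambda$ of an image point, and the first step gives the divisibility.
\item[(b)] $C\theta$ is weight-preserving. A point in the open face $a\cdot\tau'$ has the form $(1-t)y+ta$ with $y\in\tau'^{\circ}$ and $t<1$. By the first step its image has weight $\lambda(\theta(y))=\xi(\tau')$, because $\lambda_{\xi}$ is constant on open faces and $\theta$ is weight-preserving.
\item[(c)] The faces of $C\theta$ behave correctly. $\partial_0(C\theta)=\theta$ with coefficient $c^{\xi'}_0=\xi'(\Delta^{n+1})/\xi(\Delta^n)=1$. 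For $j\ge1$, the face $\partial_{j}(C\theta)=C(\theta|_{\partial_{j-1}\Delta^n})$ has coefficient $\xi(\Delta^n)/\xi(\partial_{j-1}\Delta^n)$, which matches the coefficient in $\partial^{\lambda}\theta$.
\end{itemize}
Consequently, for $n\ge1$ we get $\partial^{\lambda}C\theta+C\partial^{\lambda}\theta=\theta$. Here the apex-vertex faces produce no extra terms, because the apex only appears in faces of dimension $\ge1$ once $n\ge1$; the cone of a $0$-simplex in the sum gives the same $1$-simplex formula as in the $n=0$ case. This shows exactness at $\mathcal{WP}_n$ for all $n\ge1$.

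For $n=0$, the computation is exactly the one in the proof of Lemma~\ref{Lem:Retract-Barycenter}:
\[
\partial^{\lambda}C\theta_0=\theta_0-\frac{\xi(\Delta^0)}{\mu(\sigma)}\theta_{b_{\sigma}}.
\]
Here $\theta_{b_{\sigma}}$ is weight-preserving, since $\lambda(b_{\sigma})=\mu(\sigma)$, and $C\theta_0$ is the segment simplex $\theta^i_1$ used in that proof, which we have now shown to be weight-preserving. So the same argument gives $\ker\eta=\mathrm{Im}\,\partial^{\lambda}$ on $\mathcal{WP}_0$, and surjectivity of $\eta$ follows from $\eta(\theta_{b_{\sigma}})=\sigma$.

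The main obstacle is verifying (b) together with the integrality and divisibility in (a). This is exactly where weight-preservation of $\theta$ is essential, since it makes the weight constant along open segments. It is also where the choice of $b_{\sigma}$ as apex matters: its weight is the minimal one, in the divisibility order, among all points of the star.
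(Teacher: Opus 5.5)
Your proof is correct. Its skeleton matches the paper's: a cone operator on weight-preserving chains serves as a contracting homotopy in degrees $n\ge 1$, and the segment argument of Lemma~\ref{Lem:Retract-Barycenter} handles degree $0$.

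The substantive difference is the apex of the cone.

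\begin{itemize}
\item The paper cones to a vertex $p$ of $\sigma$ of minimal weight. It uses the linear cone \eqref{Equ:Cone-theta} with $\xi'(v_0)=\lambda_{\mu}(p)$ and asserts that $p$ has minimal weight among the points of $\mathrm{St}(\sigma,K)$.
\item The paper's argument needs $\mathrm{St}(\sigma,K)$ to be $\bigcap_{v\in V(\sigma)}\mathrm{St}(v,K)$, i.e.\ the points whose carrier contains $\sigma$. Both the Mayer--Vietoris argument and \eqref{Equ:divide-2} require this reading. Under it, $p\notin \mathrm{St}(\sigma,K)$ as soon as $\dim\sigma\ge 1$, so $p\cdot\theta$ is not a singular simplex of the star.
\item The literal formula $|\mathrm{star}_K(\sigma)|\setminus|\mathrm{link}_K(\sigma)|$ would contain $p$. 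But on that set \eqref{Equ:divide-2} fails.
\item So the paper's degree $\ge 1$ step is literally valid only when $\sigma$ is a vertex, in which case $p=b_{\sigma}$.
\end{itemize}

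Your choice of $b_{\sigma}$ is what makes the argument work. The point $b_{\sigma}$ lies in the star, and its weight $\mu(\sigma)$ divides (in the descending case, is divisible by) every weight there. Your checks (a)--(c) then go through as written. In (c) your face index $\partial_j(C\theta)=C(\theta|_{\partial_{j-1}\Delta^n})$ is the correct one; the paper's $\xi'(\partial_j\Delta^{n+1})=\xi(\partial_{j+1}\Delta^n)$ is off by a shift.

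Your version has a second advantage. The same operator $C$ produces, in degree $0$, exactly the segment simplices $\theta^i_1$ from Lemma~\ref{Lem:Retract-Barycenter}. You also verify that these are weight-preserving, which the paper leaves implicit when it calls the degree-$0$ case ``similar''. So your proof treats all degrees uniformly with one contraction.
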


\begin{proof}
   We only prove the case where $\mu$ is of ascending type since the descending type case is parallel.  The proof of the exactness at $\mathcal{WP}_0\left(\mathrm{St}(\sigma,K),\lambda_{\mu} \right)$ is similar to Lemma~\ref{Lem:Retract-Barycenter}. So we only need to prove 
       $ \mathcal{H}^{WP}_n(\mathrm{St}(\sigma,K), \lambda_{\mu}) = 0$ for all $n\geq 1$.\n

   Let $p$ be a vertex of $\sigma$ whose weight is minimal among all the vertices of $\sigma$. Then $p$ has the minimal weight among all points of $\mathrm{St}(\sigma,K)$ since $\mu$ is ascending. Suppose a weighted singular simplex $\theta: (|\Delta^n|, \lambda_{\xi}) \to (\mathrm{St}(\sigma,K), \lambda_{\mu})$, $n\geq 1$, is weight-preserving. Then there is a unique map $\theta^p: |\Delta^{n+1}| \to \mathrm{St}(\sigma,K)$ with $\theta^p(v_0)=p$ and the restriction
    of $\theta^p$ to $[v_1,\cdots, v_{n+1}]$ agrees with $\theta$; see~\eqref{Equ:Cone-theta} in the appendix for the details. \n
     
        Moreover, let $\xi'$ be a weight function on $\Delta^{n+1}=[v_0, \cdots, v_{n+1}]$ defined by:
    $$ \xi'(v_{0}):= \lambda_{\mu}(p), \ \ \xi'(v_i):=\xi(v_i), \ 1\leq i \leq n+1.$$
     
     Clearly, $\xi'$ is a divisible weight on $\Delta^{n+1}$ and $\theta^p: (|\Delta^{n+1}|, \lambda_{\xi'})\rightarrow (\mathrm{St}(\sigma,K), \lambda_{\mu})$ is weight-preserving. Similarly to~\eqref{Equ:Cone-p}, here we denote $\theta^p$ by $p\cdot \theta$.    
     Moreover, for a chain $\alpha=\sum k_i\theta_i \in 
      \mathcal{WP}_n\left(\mathrm{St}(\sigma,K),\lambda_{\mu} \right)$, define $p\cdot \alpha =\sum k_i (p\cdot \theta_i)$.\n
     
     Since $\xi'(v_{0})$ is minimal among $\xi'(v_i)$, $0\leq i\leq n+1$, we have 
    \begin{equation*}
        \xi'(\Delta^{n+1}) = \xi' (\partial_{0}\Delta^{n+1}) = \xi(\Delta^n); \quad \xi'(\partial_j \Delta^{n+1}) = \xi(\partial_{j+1} \Delta^{n}), \  0 \leq j \leq n.
    \end{equation*}
    Then by definition,
    \begin{equation*}
    \partial^{\lambda_{\mu}} (p\cdot\theta) =    \partial^{\lambda_{\mu}} \theta^p= \sum_{j=0}^{n+1} (-1)^j \frac{\xi'(\Delta^{n+1})}{\xi'( \partial_j \Delta^{n+1})} \cdot \theta^p|_{\partial \Delta^{n+1}_j} =   \theta - p\cdot (\partial^{\lambda_{\mu}} \theta).
    \end{equation*}
    Therefore, for any $n$-chain $\alpha \in \mathcal{WP}_n((\mathrm{St}(\sigma,K), \lambda_{\mu}))$, we have
    $$  \partial^{\lambda_{\mu}} (p\cdot\alpha) =  
    \alpha - p\cdot (\partial^{\lambda_{\mu}} \alpha).  $$
   In particular, if $\alpha$ is a closed chain, i.e. $\partial^{\lambda_{\mu}} \alpha = 0$, then $\partial^{\lambda_{\mu}} (p\cdot\alpha) = \alpha$. This
 means $ \mathcal{H}^{WP}_n(\mathrm{St}(\sigma,K), \lambda_{\mu}) = 0$. The lemma is proved.
\end{proof}

\vskip 0.4cm

  \section{Product structure on weighted singular cohomology} \label{Sec:Product-Cohomology}
  
   Let $(X,\lambda)$ be a weighted polyhedron. Given
   an abelian group $G$, the \emph{weighted singular cochain complex} of $(X,\lambda)$ with $G$-coefficients is obtained by applying
  the $\mathrm{Hom}(-, G)$ functor to 
  the $(\mathcal{W}_*(X,\lambda),\partial^{\lambda})$, denoted by $(\mathcal{W}^*(X,\lambda;G),\delta^{\lambda})$,
  where $\delta^{\lambda}$ is the coboundary map determined by $\partial^{\lambda}$.
  Then the \emph{weighted singular cohomology group of $(X,\lambda)$} with $G$-coefficients is defined to be
   $$ \mathcal{H}^*_W(X,\lambda;G):= H^*(\mathcal{W}^*(X,\lambda;G),\delta^{\lambda}).$$ 
    
    If $f: (X,\lambda) \rightarrow (X',\lambda')$ is a
   W-continuous map, then we obtain a cochain map
  $$f^{\#}: (\mathcal{W}^*(X',\lambda';G),\delta^{\lambda'})\rightarrow (\mathcal{W}^*(X,\lambda;G),\delta^{\lambda}) $$ 
 by: for any $\phi\in \mathcal{W}^*(X',\lambda';G)$ and any weighted singular simplex $\theta\in \mathcal{W}_n(X,\lambda)$,
  $$f^{\#}(\phi)(\theta):= 
  \phi(f_{\#}(\theta).$$
 Moreover, $f^{\#}$ induces a homomorphism on the weighted singular cohomology
 $$ f^*: \mathcal{H}^*_W(X',\lambda';G) \rightarrow  
 \mathcal{H}^*_W(X,\lambda;G). $$

   \subsection{Weighted cup product}
  \ \n
   We find that there is a natural product structure on
  the weighted singular cohomology of a descending type weighted polyhedron, which generalizes the cup product $\cup$ of ordinary singular cohomology defined by Alexander-Whitney diagonal. Moreover, we will prove  that this product is graded commutative.  
  The coefficients of the weighted cohomology groups in this section are 
 always assumed to be $\Z$ if not specified otherwise.\n
  
 \begin{defi}[Weighted Cup Product] \label{Def:Product-DW}
  \ \n
  Let $(X,\lambda)$ be a descending type weighted polyhedron and $R$ be a commutative ring with unit.
  Then for any cochains $\phi\in \mathcal{W}^p(X,\lambda)$ and
  $\psi\in \mathcal{W}^q(X,\lambda)$, let $\phi \Cup \psi\in
  \mathcal{W}^{p+q}(X,\lambda)$
  be the cochain whose value on a
   weighted singular $(p+q)$-simplex
  $\theta: (|\Delta^{p+q}|,\lambda_{\xi}) \rightarrow (X,\lambda)$ is defined by
  \begin{align}  \label{Equ:Prod-Wt-Homol}
   \phi\Cup \psi  (\theta)
   :=  \frac{\xi([v_0,\cdots,v_p]) \xi([v_p,\cdots,v_{p+q}])}{\xi([v_0,\cdots,v_{p+q}])} \, \phi(\theta|_{[v_0,\cdots,v_p]})
 \cdot \psi(\theta|_{[v_p,\cdots,v_{p+q}]}).
 \end{align}
 \normalsize
 The coefficient on the right hand side of~\eqref{Equ:Prod-Wt-Homol} is always integral because $\xi$ is descending.
  We call $\Cup$ the \emph{weighted cup product} on $(\mathcal{W}^*(X,\lambda),\delta^{\lambda})$. 
 \end{defi}

 \begin{lem} \label{Lem:Cobound}
 Let $(X,\lambda)$ be a descending type weighted polyhedron. For any cochains $\phi\in \mathcal{W}^p(X,\lambda)$ and $\psi\in \mathcal{W}^q(X,\lambda)$, we have
   \begin{equation} \label{Equ:Cobound-formula}
   	 \delta^{\lambda}(\phi\Cup \psi) =\delta^{\lambda} \phi \Cup\psi + (-1)^p\phi \Cup \delta^{\lambda}\psi.
   \end{equation}  
  \end{lem}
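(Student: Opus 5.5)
The plan is a direct computation, as for the classical Leibniz rule for the Alexander--Whitney cup product (Hatcher, Lemma 3.6), keeping track of the weight coefficients. First write the coboundary explicitly. Since $\delta^{\lambda}$ is dual to $\partial^{\lambda}$ and $\xi$ is descending, for a weighted singular $(n+1)$-simplex $\theta:(|\Delta^{n+1}|,\lambda_\xi)\to(X,\lambda)$ we have
\[
\delta^{\lambda}\phi(\theta)=\sum_{j=0}^{n+1}(-1)^j\frac{\xi(\partial_j\Delta^{n+1})}{\xi(\Delta^{n+1})}\,\phi(\theta|_{\partial_j\Delta^{n+1}}).
\]
Fix a weighted singular $(p+q+1)$-simplex $\theta$ with vertices $v_0,\dots,v_{p+q+1}$. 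We will expand both sides of \eqref{Equ:Cobound-formula} evaluated on $\theta$.

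For the right-hand side, $\delta^{\lambda}\phi\Cup\psi(\theta)$ carries the coefficient $\xi([v_0..v_{p+1}])\xi([v_{p+1}..v_{p+q+1}])/\xi(\Delta)$. Expanding $\delta^\lambda\phi$ on $[v_0..v_{p+1}]$ gives, for each $i\le p+1$, the coefficient
\[
\frac{\xi([v_0..\hat v_i..v_{p+1}])\,\xi([v_{p+1}..v_{p+q+1}])}{\xi(\Delta)},
\]
because the factor $\xi([v_0..v_{p+1}])$ cancels. Similarly, $(-1)^p\phi\Cup\delta^\lambda\psi(\theta)$ gives, for each $i\ge p$, the term $(-1)^{i}$ times
\[
\frac{\xi([v_0..v_p])\,\xi([v_p..\hat v_i..v_{p+q+1}])}{\xi(\Delta)}\,\phi(\cdot)\psi(\cdot).
\]

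For the left-hand side, $\delta^\lambda(\phi\Cup\psi)(\theta)=\sum_i(-1)^i\frac{\xi(\partial_i\Delta)}{\xi(\Delta)}(\phi\Cup\psi)(\theta|_{\partial_i\Delta})$. The cup product on the face $\partial_i\Delta$ contributes a factor $\frac{\xi(\text{front})\xi(\text{back})}{\xi(\partial_i\Delta)}$, and the factor $\xi(\partial_i\Delta)$ cancels. For $i\le p$ the front face is $[v_0..\hat v_i..v_{p+1}]$ and the back face is $[v_{p+1}..v_{p+q+1}]$. For $i\ge p+1$ the front face is $[v_0..v_p]$ and the back face is $[v_p..\hat v_i..v_{p+q+1}]$. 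These are exactly the terms of the right-hand side with $i\le p$ from the first sum and $i\ge p+1$ from the second sum, with identical coefficients and signs.

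What remains is to show that the two leftover right-hand terms cancel: the $i=p+1$ term of the first sum and the $i=p$ term of the second. The first is
\[
(-1)^{p+1}\frac{\xi([v_0..v_p])\,\xi([v_{p+1}..])}{\xi(\Delta)}\phi(\theta|_{[v_0..v_p]})\psi(\theta|_{[v_{p+1}..]}),
\]
and the second is
\[
(-1)^{p}\frac{\xi([v_0..v_p])\,\xi([v_{p+1}..])}{\xi(\Delta)}\phi(\theta|_{[v_0..v_p]})\psi(\theta|_{[v_{p+1}..]}).
\]
They have the same coefficient and the same cochain evaluations with opposite signs, so they cancel. This matching of coefficients is the only point where the weights could have caused trouble, and it works because each coefficient depends only on the weights of the two faces actually used.

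One more check is needed: every restriction appearing above is a weighted singular simplex, with the face weight $\xi|_{\text{face}}$ still divisible. Also, all coefficients are integers because $\xi$ is descending, and both sides are compared in $\mathbb{Q}$ before being concluded equal as integers. No earlier result beyond the definitions \eqref{Equ:Bd-Sing} and \eqref{Equ:Prod-Wt-Homol} is needed; the main bookkeeping obstacle is keeping the index ranges and signs straight, exactly as in the unweighted case.
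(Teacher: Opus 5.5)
Your proposal is correct and is essentially the paper's proof. Both evaluate on a $(p+q+1)$-simplex and note that the intermediate weights ($\xi(\partial_i\Delta)$, resp.\ $\xi([v_0,\dots,v_{p+1}])$ and $\xi([v_p,\dots,v_{p+q+1}])$) cancel, so every coefficient becomes $\xi(\text{front})\,\xi(\text{back})/\xi(\Delta)$. Both then match the $i\le p$ and $i\ge p+1$ terms of $\delta^{\lambda}(\phi\Cup\psi)(\theta)$ with the right-hand side, and cancel the $i=p+1$ term of $\delta^{\lambda}\phi\Cup\psi$ against the $i=p$ term of $(-1)^p\phi\Cup\delta^{\lambda}\psi$.
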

  \begin{proof}
   For a weighted singular $(p+q+1)$-simplex
  $\theta: (|\Delta^{p+q+1}|,\lambda_{\xi}) \rightarrow (X,\lambda)$,
  \small
\begin{align*}
  &\ \delta^{\lambda}(\phi\Cup \psi) (\theta)  = \phi\Cup \psi \big(\partial^{\lambda} (\theta ) \big) \\
  = &\, \phi\Cup \psi \Big( \sum^{p+q+1}_{i=0} (-1)^i \frac{\xi([v_0,\cdots,\widehat{v}_i,\cdots,v_{p+q+1}])}{\xi([v_0,\cdots,v_{p+q+1}])}\, \theta|_{[v_0,\cdots,\widehat{v}_i,\cdots,v_{p+q+1}]}\Big) \\
  = &   \sum^{p}_{i=0} (-1)^i         
       \frac{\xi([v_0,\cdots,\widehat{v}_i,\cdots,v_{p+1}])\xi([v_{p+1},\cdots,v_{p+q+1}])}{\xi([v_{0},\cdots,v_{p+q+1}])}\,  \phi\big(\theta|_{[v_0,\cdots,\widehat{v}_i,\cdots,v_{p+1}]} \big) 
  \cdot \psi\big(\theta|_{[v_{p+1},\cdots,v_{p+q+1}]} \big)\\
  +& \sum^{p+q+1}_{i=p+1} (-1)^i \frac{\xi([v_0,\cdots,v_p])        
        \xi([v_p,\cdots,\widehat{v}_i,\cdots, v_{p+q+1}])}{\xi([v_{0},\cdots,v_{p+q+1}])}\,  \phi\big(\theta|_{[v_0,\cdots,v_p]} \big) 
  \cdot \psi \big(\theta|_{[v_p,\cdots, \widehat{v}_i,\cdots,v_{p+q+1}]} \big).
  \end{align*}
  
  \normalsize
  On the other side, we have
  \small
   \begin{align*}
       &\quad  \ (\delta^{\lambda}\phi\Cup \psi)  (\theta) \\
       &= \frac{\xi([v_0,\cdots,v_{p+1}])        
        \xi([v_{p+1},\cdots,v_{p+q+1}])}{\xi([v_{0},\cdots,v_{p+q+1}])}\, \delta^{\lambda}\phi \big(\theta|_{[v_0,\cdots,v_{p+1}]} \big) 
  \cdot \psi \big(\theta|_{[v_{p+1},\cdots,v_{p+q+1}]}\big)
     \end{align*}  
 \normalsize
   where 
 \small 
 \begin{align*}
    \delta^{\lambda}\phi \big(\theta|_{[v_0,\cdots,v_{p+1}]} \big) 
      & =\sum^{p+1}_{i=0} (-1)^i \frac{\xi([v_0,\cdots,\widehat{v}_i,\cdots,v_{p+1}])}{\xi([v_0,\cdots,v_{p+1}])}
      \, \phi \big(\theta|_{[v_0,\cdots,\widehat{v}_i,\cdots,v_{p+1}]} \big).
  \end{align*}
 \normalsize
  Then we have
 \small
  \begin{align*}
       &\quad  \ (\delta^{\lambda} \phi\Cup \psi) (\theta)   \\
       &= \sum^{p+1}_{i=0} (-1)^i         
       \frac{\xi([v_0,\cdots,\widehat{v}_i,\cdots,v_{p+1}])\xi([v_{p+1},\cdots,v_{p+q+1}])}{\xi([v_{0},\cdots,v_{p+q+1}])}\,  \phi \big(\theta|_{[v_0,\cdots,\widehat{v}_i,\cdots,v_{p+1}]} \big) 
  \cdot \psi \big(\theta|_{[v_{p+1},\cdots,v_{p+q+1}]} \big).
     \end{align*} 
      \normalsize
     Similarly, we have 
  \small
   \begin{align*}
       &\quad  \ (-1)^p (\phi\Cup \delta^{\lambda}\psi)(\theta) \\
       &= \sum^{p+q+1}_{i=p} (-1)^i \frac{\xi([v_0,\cdots,v_p])        
        \xi([v_p,\cdots,\widehat{v}_i,\cdots, v_{p+q+1}])}{\xi([v_{0},\cdots,v_{p+q+1}])}\,  \phi \big(\theta|_{[v_0,\cdots,v_p]} \big) 
  \cdot \psi \big(\theta|_{[v_p,\cdots, \widehat{v}_i,\cdots,v_{p+q+1}]} \big).
     \end{align*} 
      \normalsize
  
  If we add the above two expressions up, the last term of the first sum cancels the first term of the second sum, and the remaining terms give exactly $\delta^{\lambda}(\phi\Cup \psi)(\theta) $.
  The lemma is proved.
  \end{proof}
  
  By Lemma~\ref{Lem:Cobound}, the above product $\Cup$ on cochains induces the weighted cup product on 
  $\mathcal{H}^*_W(X,\lambda)$:
   $$ \mathcal{H}^p_W(X,\lambda) \times
   \mathcal{H}^q_W(X,\lambda) \overset{\Cup}{\longrightarrow} \mathcal{H}^{p+q}_W(X,\lambda), \ p,q\in \Z.  $$
 
  Moreover, it is easy to check that the weighted cup product $\Cup$ is \emph{natural}
 in the sense that for 
 any  W-continuous map $f: (X,\lambda) \rightarrow (X',\lambda')$  between two descending type weighted polyhedra $(X,\lambda)$ and $(X',\lambda')$, the following diagram commutes
 \[ \xymatrix{
         \mathcal{H}^p_W(X',\lambda') \times
   \mathcal{H}^q_W(X',\lambda') \ar[d]_{f^*\times f^*} \ar[r]^{\qquad\quad \Cup}
                &  \mathcal{H}^{p+q}_W(X',\lambda') \ar[d]^{f^*}  \\
          \mathcal{H}^p_W(X,\lambda) \times
   \mathcal{H}^q_W(X,\lambda) \ar[r]^{\qquad\quad \Cup} &  \mathcal{H}^{p+q}_W(X,\lambda)
                 }.  \]

 \begin{rem}
 	 It seems to us that there is no meaningful product structure on the weighted singular cohomology for an ascending type weighted polyhedron. Indeed, the naive extension of the formula in~\eqref{Equ:Prod-Wt-Homol} to an ascending type weighted polyhedron cannot guarantee the coefficients
 	 in the formula to be integral and make the coboundary formula in~\eqref{Equ:Cobound-formula} hold simultaneously.
 \end{rem}  
 
 \subsection{Graded commutativity of the weighted cup product} \label{Subsec:Graded-Comm}
\ \n
  Now we are ready to prove that the weighted cup product
   $\Cup$
 on the weighted singular cohomology is graded commutative. The proof is parallel to the proof of graded commutativity of
 singular cohomology in~\cite[Theorem 3.11]{Hatcher02}.
 \n
 
 First of all, for a weighted polyhedron $(X,\lambda)$, we introduce an auxiliary map $\rho$ on 
 $\mathcal{W}_*(X,\lambda)$. For the standard $n$-simplex $\Delta^n=[v_0,\cdots, v_n]$, define a simplicial homeomorphism 
 $$T: \Delta^n\rightarrow \Delta^n, \ v_i\mapsto v_{n-i},\
  0 \leq i \leq n.$$

    For a divisible weight $\xi$ on $\Delta^n$, let $\overline{\xi}$ be the pull-back of $\xi$ to $\Delta^n$
    by $T$, i.e. 
    $$\overline{\xi}(v_i) = \xi(T (v_i))=\xi(v_{n-i}), \ 0\leq i \leq n.$$
    So $T: (\Delta^n,\overline{\xi}) \rightarrow (\Delta^n,\xi)$ is an isomorphism of weighted simplicial complexes, whose geometric realization is (see Example~\ref{Exam:Geo-Realiztion}) 
    $$ |T|: (|\Delta^n|, \lambda_{\overline{\xi}}) \rightarrow (|\Delta^n|, \lambda_{\xi}).$$
  
  For a weighted singular $n$-simplex
  $\theta: (|\Delta^n|,\lambda_{\xi}) \rightarrow (X,\lambda)$, we have an associated  weighted singular $n$-simplex
  \[ \ \overline{\theta}=\theta\circ |T|: (|\Delta^n|,\lambda_{\overline{\xi}}) \rightarrow (|\Delta^n|,\lambda_{\xi}) \rightarrow (X,\lambda) \]
  Moreover, define a linear map $\rho:  \big(\mathcal{W}_*(X,\lambda), \partial^{\lambda} \big) \rightarrow \big(\mathcal{W}_*(X,\lambda), \partial^{\lambda} \big)$  by
 \begin{equation} \label{Equ:rho-Def}
  \rho ( \theta) : = \varepsilon_n \overline{\theta}, \ \text{where} \ \varepsilon_n=(-1)^{\frac{n(n+1)}{2}}. 
 \end{equation}
 
 \n
 \begin{lem} \label{Lem:Inver-Order}
 The map $\rho:  \big(\mathcal{W}_*(X,\lambda), \partial^{\lambda} \big) \rightarrow \big(\mathcal{W}_*(X,\lambda), \partial^{\lambda} \big)$ is a chain map 
  and $\rho$ is chain homotopic to the identity map.  
 \end{lem}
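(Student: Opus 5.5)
First I will check that $\rho$ is a chain map by a direct computation, as in Hatcher's Theorem 3.11. Let $\theta: (|\Delta^n|,\lambda_{\xi})\to (X,\lambda)$. The $j$-th face of $\overline{\theta}$ is the restriction of $\theta$ to $\partial_{n-j}\Delta^n$, precomposed with the order-reversing map on $\Delta^{n-1}$:
\[
\overline{\theta}|_{\partial_j\Delta^n}=\overline{\theta|_{\partial_{n-j}\Delta^n}}.
\]
Since $T$ is an isomorphism $(\Delta^n,\overline{\xi})\to(\Delta^n,\xi)$, the coefficients correspond as well:
\[
c^{\overline{\xi}}_j(\Delta^n)=c^{\xi}_{n-j}(\Delta^n).
\]
Both $\overline{\xi}(\Delta^n)=\xi(\Delta^n)$ and $\overline{\xi}(\partial_j\Delta^n)=\xi(\partial_{n-j}\Delta^n)$ hold, because weights depend only on the underlying simplex and not on its ordering. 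Hence
\[
\partial^{\lambda}\rho(\theta)=\varepsilon_n\sum_j(-1)^j c^{\xi}_{n-j}\,\overline{\theta|_{\partial_{n-j}}}.
\]
Reindexing with $i=n-j$ and using $\varepsilon_n=(-1)^n\varepsilon_{n-1}$ gives $\rho(\partial^{\lambda}\theta)$, exactly as in the unweighted case.

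For the chain homotopy I will follow Hatcher's prism construction. For each weighted singular $n$-simplex $\theta$, I form the composite
\[
\theta\circ\pi: \Delta^n\times[0,1]\to X,
\]
where $\pi$ is the projection. On $\Delta^n\times[0,1]$, with bottom vertices $v_i$ and top vertices $w_i$, I take the simplices $[v_0,\dots,v_i,w_n,\dots,w_i]$ and define
\[
P(\theta)=\sum_i(-1)^i\varepsilon_{n-i}\,(\theta\circ\pi)|_{[v_0,\dots,v_i,w_n,\dots,w_i]}.
\]
Here each prism simplex receives the weight pulled back from $\xi$ via the vertex assignment $v_k,w_k\mapsto v_k$. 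The first thing to verify is that this is a legitimate weighted singular simplex. I will put the divisible weight $\zeta$ on the $(n+1)$-simplex given by $\zeta(v_k)=\zeta(w_k)=\xi(v_k)$, extended to faces by the divisible rule (max or min along the divisibility chain). This is divisible because $\xi$ is. The map is W-continuous, in fact weight-preserving onto $(|\Delta^n|,\lambda_{\xi})$: a point in the interior of a face spanned by some $v$'s and $w$'s projects into the interior of the face of $\Delta^n$ spanned by the corresponding indices, and both weights equal the extremal $\xi(v_k)$ over that index set.

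The key step, which I expect to be the main obstacle, is verifying $\partial^{\lambda}P+P\partial^{\lambda}=\rho-\mathrm{id}$, because each face now carries a coefficient $c^{\zeta}_j$. I will handle it by observing that every face of a prism simplex is again a simplex whose vertex indices form a multiset $S$, and whose weight is the extremal value of $\xi$ over the underlying index set. The coefficient $c^{\zeta}_j$ is then the ratio of $\xi$ on the index set of the prism simplex to $\xi$ on the index set of the face. There are three kinds of faces:
\begin{itemize}
\item An interior face obtained by deleting one of $v_i,w_i$ when both are present. Its index set is still all of $\{0,\dots,n\}$, so its coefficient is $1$, and these faces cancel in pairs exactly as in Hatcher.
\item The end faces $[w_n,\dots,w_0]$ and $[v_0,\dots,v_n]$. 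These carry coefficient $1$ and produce $\rho(\theta)$ and $-\theta$.
\item A face missing an entire index $k$. It carries coefficient $\xi(\Delta^n)/\xi(\partial_k\Delta^n)$ (or its inverse in the descending case), which is exactly $c^{\xi}_k(\Delta^n)$. So these faces reproduce $-P(\partial^{\lambda}\theta)$ term by term.
\end{itemize}
The sign bookkeeping is identical to Hatcher's; the genuinely new check is that the weight coefficients match in these three cases. I expect that check to go through because weights depend only on index sets.
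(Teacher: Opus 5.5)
Your proposal is correct and follows the paper's proof essentially step for step. The chain-map property is obtained by reindexing $j\mapsto n-j$ and using $\varepsilon_n=(-1)^n\varepsilon_{n-1}$. The chain homotopy uses the same prism operator $P(\theta)=\sum_i(-1)^i\varepsilon_{n-i}(\theta\circ\pi)|_{[v_0,\dots,v_i,w_n,\dots,w_i]}$ on $(\Delta^n\times[0,1],\xi\times\mathbf{1})$, and your weight $\zeta$ is exactly this product weight. Two things you do more carefully than the paper: you check W-continuity of the prism pieces, and you sort the prism faces by index set. That sorting shows directly why the cancelling and end faces carry coefficient $1$ while the faces matched with $P(\partial^{\lambda}\theta)$ carry $c^{\xi}_k(\Delta^n)$, a point the paper's displayed formulas handle somewhat loosely.
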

 \begin{proof}
First, we prove that $\rho$ is a chain map, i.e. $\partial^{\lambda} \rho = \rho \partial^{\lambda}$. we compute
\begin{equation*}
    \begin{aligned}
        \partial^{\lambda} \rho (\theta) &= \partial^{\lambda}(\varepsilon_n \overline{\theta}) \\ 
    &=\sum_{i=0}^n (-1)^i \varepsilon_n \frac{\overline{\xi}([v_0, \cdots, \hat{v}_i, \cdots, v_n])}{\overline{\xi}([v_0, \cdots, v_n])} \overline{\theta}|_{[v_0, \cdots, \hat{v}_i, \cdots, v_n]} \\ 
    &=\sum_{i=0}^n (-1)^i \varepsilon_n \frac{\xi([v_n, \cdots, \hat{v}_{n-i}, \cdots, v_n])}{\xi([v_n, \cdots, v_0])} \theta|_{[v_n, \cdots, \hat{v}_{n-i}, \cdots, v_n]},
    \end{aligned}
\end{equation*}
and
\begin{equation*}
    \begin{aligned}
        \rho\partial^{\lambda}(\theta) &= \rho \Bigl( \sum_{j=0}^n (-1)^j\frac{\xi([v_0,\cdots, \hat{v}_j, \cdots,v_n])}{\xi([v_0,\cdots, v_n])} \theta|_{[v_0,\cdots, \hat{v}_j, \cdots,v_n]}\Bigr) \\ 
        &= \sum_{j=0}^n (-1)^j\varepsilon_{n-1} \frac{\xi([v_0,\cdots, \hat{v}_j, \cdots,v_n])}{\xi([v_0,\cdots, v_n])} \theta|_{[v_n, \cdots, \hat{v}_j, \cdots, v_0]}. \quad
    \end{aligned}
\end{equation*}
Since 
$$
    (-1)^{n-i}\varepsilon_{n-1} = (-1)^{n-i + \frac{n(n-1)}{2}} = (-1)^{-i} \varepsilon_n = (-1)^i \varepsilon_n,$$
the $(n-i)$-th term in the summation in the second equation equals the $i$-th term in the summation in the first equation. Therefore, $\partial^{\lambda} \rho = \rho \partial^{\lambda}$.\n
 
Next, we prove that $\rho$ is chain homotopic to the identity.
 For a divisibly weighted simplex $(\Delta^n, \xi)$ of descending type, consider $(\Delta^n \times [0, 1], \xi \times \mathbf{1})$. We identify $\Delta^n$ with $\Delta^n \times \{0\}$ and use $v_i$ to denote the $i$-th vertex of either simplex, and use $w_i$ to denote the $i$-th vertex of $\Delta^n \times \{1\}$.
Let $\pi: \Delta^n \times [0, 1] \to \Delta^n$ denote the projection map. Define $P: \mathcal{W}_n(X, \lambda) \rightarrow \mathcal{W}_{n+1}(X, \lambda)$, $n\in \Z$, by
\begin{equation}
    P(\theta):= \sum_{i=0}^n (-1)^i \varepsilon_{n-i}(\theta \circ \pi)|_{[v_0, \cdots, v_i, w_n, \cdots, w_i]}. 
\end{equation}
 By direct computation, we have 
 \begin{equation*}
     \begin{aligned}
         \partial^{\lambda} P(\theta) &= 
         \sum_{j\leq i} (-1)^{i+j} \varepsilon_{n-i}\frac{\xi([v_0, \cdots, \hat{v}_j, \cdots, v_n])}{\xi([v_0, \cdots, v_n])} (\theta \circ \pi)|_{[v_0, \cdots, \hat{v}_j, \cdots, v_i, w_n, \cdots, w_i]} \\ 
         & +  \sum_{i\leq j } (-1)^{n+1-j} \varepsilon_{n-i}\frac{\xi([v_0, \cdots, \hat{v}_j, \cdots, v_n])}{\xi([v_0, \cdots, v_n])} (\theta \circ \pi)|_{[v_0, \cdots, v_i, w_n, \cdots, \hat{w}_j, \cdots, w_i]} \\ 
     \end{aligned}
 \end{equation*}
 \begin{equation*}
     \begin{aligned}
         P \partial^{\lambda} (\theta) &= P\Big(\sum_{j=0}^n (-1)^j \frac{\xi([v_0, \cdots, \hat{v}_j, \cdots, v_n])}{\xi([v_0, \cdots, v_n])} \theta|_{[v_0, \cdots, \hat{v}_j, \cdots, v_n]} \Big)\\
         & = \sum_{i<j} (-1)^{i+j} \varepsilon_{n-1-i} \frac{\xi([v_0, \cdots, \hat{v}_j, \cdots, v_n])}{\xi([v_0, \cdots, v_n])}  (\theta\circ \pi)|_{[v_0, \cdots, v_i, w_n, \cdots, \hat{w}_j, \cdots, w_i]} \\ 
         & + \sum_{j<i} (-1)^{i+j+1} \varepsilon_{n-i} \frac{\xi([v_0, \cdots, \hat{v}_j, \cdots, v_n])}{\xi([v_0, \cdots, v_n])}  (\theta\circ \pi)|_{[v_0, \cdots, \hat{v}_j, \cdots, v_i, w_n, \cdots, w_i]}.
     \end{aligned}
 \end{equation*} 
 Adding the above two equations, we obtain 
 \begin{equation*}
     \begin{aligned}
         &\partial^{\lambda} P(\theta) + P \partial^{\lambda} (\theta)\\
 =& \sum_{i=0}^n  \varepsilon_{n-i}(\theta \circ \pi)|_{[v_0, \cdots, v_{i-1}, w_n, \cdots, w_i]} 
        +  \sum_{j=0}^n (-1)^{n+1-j} \varepsilon_{n-j} (\theta \circ \pi)|_{[v_0, \cdots, v_j, w_n, \cdots, \cdots, w_{j+1}]} \\
         =& \sum_{i=0}^n  \varepsilon_{n-i}(\theta \circ \pi)|_{[v_0, \cdots, v_{i-1}, w_n, \cdots, w_i]} 
         +  \sum_{i=1}^{n+1} (-1)^{n-i} \varepsilon_{n-i+1} (\theta \circ \pi)|_{[v_0, \cdots, v_{i-1}, w_n, \cdots, \cdots, w_{i}]} \\
     \end{aligned}
 \end{equation*}
 Since $(-1)^{n-i}\varepsilon_{n-i+1}=-\varepsilon_{n-i}$, the above equation equals
 \begin{equation*}
     \varepsilon_n (\theta \circ \pi)|_{[w_n, \cdots, w_0]} - (\theta \circ \pi)|_{[v_0, \cdots, v_{n}]} = \rho(\theta) - \theta.
 \end{equation*}
 Therefore, $P$ is a chain homotopy between $\rho$ and the identity map. The lemma is proved.
 \end{proof}

 By the above lemma, $\rho$ induces a cochain map 
 \begin{equation*}
     \rho^{\sharp}: (\mathcal{W}^*(X, \lambda), \delta^{\lambda}) \to (\mathcal{W}^*(X, \lambda), \delta^{\lambda}),
 \end{equation*}
 which is cochain homotopic to the identity map. Therefore, the map $\rho^*$ induced by $\rho$ on the cohomology group is the identity map. 
The following lemma is straightforward from Lemma~\ref{Lem:Inver-Order}.

 \begin{lem}\label{lem-graded-com}
     For any cochains $\phi \in \mathcal{W}^p(X, \lambda)$ and $\psi \in \mathcal{W}^q(X, \lambda)$, we have 
     \begin{equation*}
         \rho^{\sharp} (\psi \Cup \phi) = (-1)^{pq} \phi \Cup \psi. 
     \end{equation*}
 \end{lem}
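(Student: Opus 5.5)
The plan is to evaluate both sides of the identity in Lemma~\ref{lem-graded-com} on an arbitrary weighted singular $(p+q)$-simplex $\theta: (|\Delta^{n}|,\lambda_{\xi}) \rightarrow (X,\lambda)$, $n=p+q$, and compare them term by term, as in the proof of~\cite[Theorem 3.11]{Hatcher02}. The computation has three ingredients.

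First, by~\eqref{Equ:rho-Def} and the definition of $\Cup$ applied to $\overline{\theta}$ with weight $\overline{\xi}$,
\[
\rho^{\sharp}(\psi\Cup\phi)(\theta)=\varepsilon_n\,\frac{\overline{\xi}([v_0,\cdots,v_q])\,\overline{\xi}([v_q,\cdots,v_n])}{\overline{\xi}([v_0,\cdots,v_n])}\,\psi\big(\overline{\theta}|_{[v_0,\cdots,v_q]}\big)\,\phi\big(\overline{\theta}|_{[v_q,\cdots,v_n]}\big).
\]
Second, since $\overline{\xi}(v_i)=\xi(v_{n-i})$, I will identify $\overline{\theta}|_{[v_0,\cdots,v_q]}$ with $\overline{\theta|_{[v_p,\cdots,v_n]}}$ and $\overline{\theta}|_{[v_q,\cdots,v_n]}$ with $\overline{\theta|_{[v_0,\cdots,v_p]}}$. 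Each is the corresponding face of $\theta$ traversed in reverse order, with the pulled-back divisible weight. Consequently the weight coefficient becomes
\[
\frac{\xi([v_0,\cdots,v_p])\,\xi([v_p,\cdots,v_n])}{\xi([v_0,\cdots,v_n])},
\]
which is exactly the coefficient appearing in $(\phi\Cup\psi)(\theta)$. This is the one genuinely weighted point, and it holds because reversing the vertex order only relabels the faces, so $\xi$ and $\overline{\xi}$ assign the same weights to corresponding faces.

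Third, I will use the sign identity $\varepsilon_n=(-1)^{pq}\varepsilon_p\varepsilon_q$, which follows from $\tfrac{n(n+1)}{2}=\tfrac{p(p+1)}{2}+\tfrac{q(q+1)}{2}+pq$. Combining the three ingredients gives
\[
\rho^{\sharp}(\psi\Cup\phi)(\theta)=(-1)^{pq}\,(\varepsilon_p\overline{\ \cdot\ }\text{-twisted }\phi\Cup\varepsilon_q\overline{\ \cdot\ }\text{-twisted }\psi)(\theta),
\]
that is,
\[
\rho^{\sharp}(\psi\Cup\phi)=(-1)^{pq}\,\rho^{\sharp}\phi\Cup\rho^{\sharp}\psi .
\]

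The last step, and the one I expect to be the main point, is to pass from this to the stated form $(-1)^{pq}\phi\Cup\psi$. Here I would invoke Lemma~\ref{Lem:Inver-Order}: $\rho^{\sharp}$ is cochain homotopic to the identity, say $\rho^{\sharp}-\mathrm{id}=\delta^{\lambda}P^{\sharp}+P^{\sharp}\delta^{\lambda}$. For cocycles $\phi$ and $\psi$, this gives $\rho^{\sharp}\phi=\phi+\delta^{\lambda}(P^{\sharp}\phi)$, and similarly for $\psi$. By the Leibniz rule in Lemma~\ref{Lem:Cobound}, $\rho^{\sharp}\phi\Cup\rho^{\sharp}\psi$ then differs from $\phi\Cup\psi$ by a coboundary.

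Hence the identity of Lemma~\ref{lem-graded-com} holds on the level at which it is used, namely for cocycles modulo coboundaries, in $\mathcal{H}^{p+q}_W(X,\lambda)$. At the level of arbitrary cochains, the computation shows precisely that the factors carry $\rho^{\sharp}$. Since $\rho^*=\mathrm{id}$, this is immaterial for graded commutativity, and it yields $[\psi]\Cup[\phi]=(-1)^{pq}[\phi]\Cup[\psi]$.
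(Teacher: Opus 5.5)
Your argument is correct, and it is the route of Hatcher's Theorem 3.11, which the paper only gestures at. The paper gives no proof beyond calling the lemma ``straightforward from Lemma~\ref{Lem:Inver-Order}''. The real content is your direct computation: reversal identifies the faces $[v_0,\cdots,v_q]$ and $[v_q,\cdots,v_n]$ of $\overline{\theta}$ with the reversed faces $[v_p,\cdots,v_n]$ and $[v_0,\cdots,v_p]$ of $\theta$, the weight coefficients agree, and $\varepsilon_n=(-1)^{pq}\varepsilon_p\varepsilon_q$. The passage to cohomology then goes through Lemma~\ref{Lem:Inver-Order} and the Leibniz rule of Lemma~\ref{Lem:Cobound}, and you carry this out correctly.

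You were also right not to claim the displayed identity for arbitrary cochains. You should say outright that it is false there, rather than that it ``holds on the level at which it is used''. For example, take $p=1$ and $q=0$. Let $\psi$ be $1$ on every weighted singular $0$-simplex, and let $\phi$ be the indicator cochain of a single weighted singular $1$-simplex $\theta:(|\Delta^1|,\lambda_{\xi})\rightarrow(X,\lambda)$ with $\theta(v_0)\neq\theta(v_1)$, so that $\overline{\theta}\neq\theta$. Then $\rho^{\sharp}(\psi\Cup\phi)(\theta)=-\xi(v_1)\,\phi(\overline{\theta})=0$, whereas $(\phi\Cup\psi)(\theta)=\xi(v_1)\neq 0$. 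This failure already occurs for $\lambda\equiv 1$, which is why Hatcher's cochain-level formula carries $\rho$ on both factors.

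So the correct cochain-level statement is your $\rho^{\sharp}(\psi\Cup\phi)=(-1)^{pq}\,\rho^{\sharp}\phi\Cup\rho^{\sharp}\psi$. The printed form holds only for cocycles modulo coboundaries. That is all the subsequent graded-commutativity theorem uses, since it only needs $[\rho^{\sharp}(\phi\Cup\psi)]=(-1)^{pq}[\psi\Cup\phi]$ for cocycles $\phi,\psi$.
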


We summarize this section in the following theorem:
 \begin{thm}
     Let $(X, \lambda)$ be a descending type weighted polyhedron. 
     For any cohomology classes $[\phi]\in \mathcal{H}^p_W(X, \lambda)$ and $[\psi]\in \mathcal{H}^q_W(X, \lambda)$, we have 
     \begin{equation}
         [\phi] \Cup [\psi] = (-1)^{pq} [\psi] \Cup [\phi]. 
     \end{equation}
 \end{thm}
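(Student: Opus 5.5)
The plan follows the proof of \cite[Theorem 3.11]{Hatcher02}: deduce the statement from Lemma~\ref{lem-graded-com} together with the fact, from Lemma~\ref{Lem:Inver-Order}, that $\rho^{\sharp}$ is cochain homotopic to the identity.

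\textbf{Step 1: the identity of Lemma~\ref{lem-graded-com}.} Let $\theta: (|\Delta^{p+q}|,\lambda_{\xi})\rightarrow (X,\lambda)$ be a weighted singular $(p+q)$-simplex and set $n=p+q$. By definition,
$$\rho^{\sharp}(\psi\Cup\phi)(\theta)=\varepsilon_n\,(\psi\Cup\phi)(\overline{\theta}).$$
Here $\overline{\theta}=\theta\circ|T|$ carries the weight $\overline{\xi}$. Evaluating $\psi\Cup\phi$ on $\overline{\theta}$ by \eqref{Equ:Prod-Wt-Homol} uses the front $q$-face $[v_0,\cdots,v_q]$ and the back $p$-face $[v_q,\cdots,v_n]$. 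Under $T$ these become the reversed faces $[v_n,\cdots,v_p]$ and $[v_p,\cdots,v_0]$ of $\theta$.

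The weight coefficient matches that of $\phi\Cup\psi(\theta)$:
$$\frac{\xi([v_0,\cdots,v_p])\,\xi([v_p,\cdots,v_n])}{\xi(\Delta^n)}.$$
This holds because a weight depends only on the underlying simplex, not on its vertex order.

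Next, reversing a $k$-simplex is exactly $\rho$ up to the sign $\varepsilon_k$. This gives
$$\psi(\overline{\theta}|_{\cdots})=\varepsilon_q\,(\rho^{\sharp}\psi)(\theta|_{[v_p,\cdots,v_n]}),\qquad \phi(\overline{\theta}|_{\cdots})=\varepsilon_p\,(\rho^{\sharp}\phi)(\theta|_{[v_0,\cdots,v_p]}).$$
Finally, $\varepsilon_n\varepsilon_p\varepsilon_q=(-1)^{pq}$, since
$$\tfrac{n(n+1)}{2}-\tfrac{p(p+1)}{2}-\tfrac{q(q+1)}{2}=pq.$$
Altogether this yields the precise cochain-level identity
$$\rho^{\sharp}(\psi\Cup\phi)=(-1)^{pq}\,\rho^{\sharp}\phi\Cup\rho^{\sharp}\psi.$$
I read Lemma~\ref{lem-graded-com} in this corrected form, as in Hatcher.

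\textbf{Step 2: pass to cohomology.} Let $\phi,\psi$ be cocycles. By Lemma~\ref{Lem:Inver-Order}, $\rho$ is chain homotopic to the identity, so $\rho^{\sharp}\phi=\phi+\delta^{\lambda}a$ and $\rho^{\sharp}\psi=\psi+\delta^{\lambda}b$ for some cochains $a,b$. By Lemma~\ref{Lem:Cobound}, the weighted cup product of a cocycle with a coboundary is a coboundary, so the class $[\rho^{\sharp}\phi\Cup\rho^{\sharp}\psi]$ equals $[\phi\Cup\psi]$. On the other side, $\rho^{*}=\mathrm{id}$ on cohomology, so $[\rho^{\sharp}(\psi\Cup\phi)]=[\psi\Cup\phi]$. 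Combining with Step 1 gives $[\psi]\Cup[\phi]=(-1)^{pq}[\phi]\Cup[\psi]$, which is the theorem.

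\textbf{Main obstacle.} The delicate point is the bookkeeping in Step 1. One must check that restricting $\overline{\theta}$ to a face agrees with the face of $\theta$ composed with the reversal map $T$ of the smaller simplex, and that the weights pulled back by $T$ on that face agree, so that $\rho$ acts face by face. The divisibility of $\overline{\xi}$ is automatic, since it is the same weight on the same geometric simplex with the order reversed.
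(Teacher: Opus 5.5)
Your proof is correct and follows the paper's route: Hatcher's argument via the reversal map $\rho$, Lemma~\ref{Lem:Inver-Order}, and a cochain-level identity for $\rho^{\sharp}$. Your corrected identity $\rho^{\sharp}(\psi\Cup\phi)=(-1)^{pq}\,\rho^{\sharp}\phi\Cup\rho^{\sharp}\psi$ is the right one, since Lemma~\ref{lem-graded-com} as printed omits $\rho^{\sharp}$ on the right-hand side and is false at the cochain level in general; your Step~2, which uses Lemma~\ref{Lem:Cobound} to absorb the coboundaries $\delta^{\lambda}a,\delta^{\lambda}b$, supplies exactly the step that the paper's one-line proof skips.
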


 \begin{proof}
     By, \Cref{Lem:Inver-Order} and \Cref{lem-graded-com}, we have 
     \begin{equation*}
         \begin{aligned}
             [\phi] \Cup [\psi] = \rho^*([\phi] \Cup [\psi])= [\rho^{\sharp}(\phi\Cup \psi)] =
             & [(-1)^{pq} \psi \Cup \phi] \\ 
             = & (-1)^{pq} [\psi] \Cup [\phi].  
         \end{aligned}
     \end{equation*}
     So the product $\Cup$ on $\mathcal{H}^p_W(X, \lambda)$ is graded commutative. 
 \end{proof}
   \nn
 
  \subsection{Weighted cap product}
  \ \n
  For a descending type weighted polyhedron $(X, \lambda)$, we can also define a product 
 between elements of $H_*(X, \lambda;R)$
 and $H^*(X, \lambda;R)$ where $R$ is a commutative ring.

 \begin{defi}[Weighted Cap Product]      
 	 \label{Def:Weighted-Cap-Prod}
 Suppose $(X, \lambda)$ is a descending type weighted polyhedron. Define $R$-bilinear \emph{weighted cap product} 
  $$ \Cap:  \mathcal{W}_n(X, \lambda; R)\times \mathcal{W}^p(X, \lambda; R) \rightarrow  \mathcal{W}_{n-p}(X, \lambda; R), \ 0\leq p\leq n $$ 
  by: for any cochain $\phi\in \mathcal{W}^p(X, \lambda; R)$ and any chain $\theta\in \mathcal{W}_n(X, \lambda; R)$, 
   \begin{equation*} 
   \theta  \,\Cap\, \phi := \frac{\xi\big([ v_0,\cdots, v_{p} ]\big)\xi\big([ v_p,\cdots, v_{n} ]\big)}{\xi\big([ v_0,\cdots, v_{n} ]\big)} \phi\big( \theta|_{  [ v_0,\cdots,v_{p} ]} \big) \theta|_{[ v_{p},\cdots,v_{n} ]}. 
   \end{equation*}
 The coefficient on the right hand side is integral 
  because $\xi$ is a descending weight. 
 \end{defi}
 
 For any chain
 $\theta \in \mathcal{W}_n(X, \lambda; R)$,
 it is routine to check from the definitions that 
  \begin{equation} \label{Equ:Cap-Bound-Rel}
    \partial^{\lambda} \big( \theta\,\Cap\,
    \phi \big) = (-1)^p \big(  \partial^{\lambda} \theta\,\Cap\, \phi - 
    \theta\,\Cap\, \delta^{\lambda}\phi \, \big).
  \end{equation}
 This implies that there is an induced product
 \begin{equation*} 
  \mathcal{H}_n^W(X, \lambda;R) \times \mathcal{H}_W^p(X, \lambda;R) \overset{\Cap}{\longrightarrow} 
  \mathcal{H}^W_{n-p}(X, \lambda;R).
 \end{equation*} 
  
 Moreover, the following lemma tells us that 
 the two products $\Cup$ and 
 $\Cap$ are compatible just as the ordinary
 cup product and cap product do.
 
 \begin{lem} \label{Lem:Cap-Cup-Compatible} 
 Let $(X,\lambda)$ be a descending type weighted polyhedron. Then for any cochains $\phi\in \mathcal{W}^p(X, \lambda; R)$,
 $\psi\in \mathcal{W}^q(X, \lambda; R)$ and any chain
 $\theta\in \mathcal{W}_n(X, \lambda; R)$ with $p+q\leq n$, 
  $$ (\theta \,\Cap\, \phi ) \,\Cap\, \psi =  \theta \,\Cap\,
  \big( \phi \,\Cup\, \psi \big). $$
 \end{lem}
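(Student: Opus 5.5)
Both sides are $R$-bilinear in $\phi$ and $\psi$ and linear in $\theta$, so I will verify the identity on a single weighted singular $n$-simplex $\theta: (|\Delta^n|,\lambda_{\xi}) \rightarrow (X,\lambda)$ with $\Delta^n=[v_0,\cdots,v_n]$. The plan is a direct computation from Definition~\ref{Def:Weighted-Cap-Prod} and Definition~\ref{Def:Product-DW}: expand both sides into a scalar multiple of the simplex $\theta|_{[v_{p+q},\cdots,v_n]}$, and then compare the scalars.

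\textbf{Left-hand side.} First I compute $\theta\Cap\phi$. It equals
$$\frac{\xi([v_0,\cdots,v_p])\,\xi([v_p,\cdots,v_n])}{\xi([v_0,\cdots,v_n])}\,\phi(\theta|_{[v_0,\cdots,v_p]})\,\theta|_{[v_p,\cdots,v_n]}.$$
The simplex $\theta' := \theta|_{[v_p,\cdots,v_n]}$ is a weighted singular $(n-p)$-simplex. Its weight is the restriction $\xi|_{[v_p,\cdots,v_n]}$, with vertices relabelled $v'_i=v_{p+i}$. Applying $\Cap\,\psi$ to $\theta'$ uses only the faces $[v_p,\cdots,v_{p+q}]$ and $[v_{p+q},\cdots,v_n]$, and gives
$$\frac{\xi([v_p,\cdots,v_{p+q}])\,\xi([v_{p+q},\cdots,v_n])}{\xi([v_p,\cdots,v_n])}\,\psi(\theta|_{[v_p,\cdots,v_{p+q}]})\,\theta|_{[v_{p+q},\cdots,v_n]}.$$
Multiplying the two scalars, the factor $\xi([v_p,\cdots,v_n])$ cancels. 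The total coefficient on the left is therefore
$$\frac{\xi([v_0,\cdots,v_p])\,\xi([v_p,\cdots,v_{p+q}])\,\xi([v_{p+q},\cdots,v_n])}{\xi([v_0,\cdots,v_n])}\,\phi(\theta|_{[v_0,\cdots,v_p]})\,\psi(\theta|_{[v_p,\cdots,v_{p+q}]}).$$

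\textbf{Right-hand side.} Next I compute $\theta\Cap(\phi\Cup\psi)$. This gives the factor $\xi([v_0,\cdots,v_{p+q}])\,\xi([v_{p+q},\cdots,v_n])/\xi([v_0,\cdots,v_n])$, multiplied by $(\phi\Cup\psi)(\theta|_{[v_0,\cdots,v_{p+q}]})$, applied to the same face $\theta|_{[v_{p+q},\cdots,v_n]}$. Here the restriction $\theta|_{[v_0,\cdots,v_{p+q}]}$ carries the weight $\xi|_{[v_0,\cdots,v_{p+q}]}$. So by \eqref{Equ:Prod-Wt-Homol} the cup product contributes
$$\frac{\xi([v_0,\cdots,v_p])\,\xi([v_p,\cdots,v_{p+q}])}{\xi([v_0,\cdots,v_{p+q}])}\,\phi(\theta|_{[v_0,\cdots,v_p]})\,\psi(\theta|_{[v_p,\cdots,v_{p+q}]}).$$
The factor $\xi([v_0,\cdots,v_{p+q}])$ cancels, and the result is the same coefficient as on the left. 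Since $R$ is commutative, the order of the factors $\phi(\cdot)$ and $\psi(\cdot)$ does not matter.

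\textbf{Main obstacle.} The only genuine point is bookkeeping: the faces of the restricted simplices must be the correct faces of $\Delta^n$, and their weights must be the restrictions of $\xi$. In particular, the front $q$-face of $[v_p,\cdots,v_n]$ has to be $[v_p,\cdots,v_{p+q}]$. This follows from the canonical identification of faces of $\Delta^n$ with lower-dimensional standard simplices, which preserves vertex order, as used in \eqref{Equ:Bd-Sing}. All intermediate coefficients are integers because $\xi$ is descending, as already noted in the definitions, so the cancellations are legitimate in $R$. Extending linearly over $\theta$ completes the proof.
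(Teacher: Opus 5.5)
Your proposal is correct and is the same direct single-simplex computation the paper uses: both sides reduce to $\frac{\xi([v_0,\cdots,v_p])\,\xi([v_p,\cdots,v_{p+q}])\,\xi([v_{p+q},\cdots,v_n])}{\xi([v_0,\cdots,v_n])}\,\phi(\theta|_{[v_0,\cdots,v_p]})\,\psi(\theta|_{[v_p,\cdots,v_{p+q}]})\,\theta|_{[v_{p+q},\cdots,v_n]}$. Your remarks on the order-preserving face identifications and the integrality of the coefficients only make explicit what the paper leaves as ``easy to see''.
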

 \begin{proof}
  By computation, we obtain 
 \begin{align*}
 &\ \big( \theta  \,\Cap\, \phi \big)  \,\Cap\, \psi\\
 =& \frac{\xi\big([ v_0,\cdots, v_{p} ]\big)\xi\big([ v_p,\cdots, v_{p+q} ]\big) \xi\big([ v_{p+q},\cdots, v_{n} ]\big)}{\xi\big([ v_0,\cdots, v_{n} ]\big)}\phi\big(  \theta|_{[ v_0,\cdots,v_{p}]} \big) 
 \psi\big(  \theta|_{[ v_p,\cdots,v_{p+q} ]} \big) \theta|_{[ v_{p+q},\cdots,v_{n} ]};
 \end{align*} 
 and
 \begin{align*}
 &\  \theta  \,\Cap\, \big( \phi \,\Cup\, \psi \big) \\
 =& \frac{\xi\big([ v_0,\cdots, v_{p+q} ]\big) \xi\big([ v_{p+q},\cdots, v_{n} ]\big)}{\xi\big([ v_0,\cdots, v_{n} ]\big)} 
 \big( \phi \,\Cup\, \psi \big)\big(  \theta|_{[ v_0,\cdots,v_{p+q} ]} \big) 
  \theta|_{[ v_{p+q},\cdots,v_{n} ]}.
 \end{align*}
 Then it is easy see that $\theta \,\Cap\, \big( \phi \,\Cup\, \psi \big) = \big( \theta  \,\Cap\, \phi \big)  \,\Cap\, \psi$.
 \end{proof}
 
By the above lemma, we obtain the following corollary immediately.

\begin{cor} \label{Cor:Cap-Cup-Compatible}
Let $(X, \lambda)$ be a descending type weighted polyhedron. Then for any cohomology classes $[\phi]\in \mathcal{H}_W^p(X, \lambda;R)$, $[\psi]\in \mathcal{H}_W^q(X, \lambda;R)$ and a homology class
 $[\theta]\in \mathcal{H}^W_n(X, \lambda;R)$ with $p+q\leq n$, 
  $$ ([\theta] \Cap [\phi]) \Cap [\psi] =  [\theta] \Cap
  ( [\phi] \Cup [\psi] ). $$
\end{cor}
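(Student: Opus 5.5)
The plan is to pass from the chain-level identity of Lemma~\ref{Lem:Cap-Cup-Compatible} to homology classes. First I will check that the weighted cap product is well defined on classes, using the boundary relation~\eqref{Equ:Cap-Bound-Rel}. If $\theta$ is a cycle and $\phi$ is a cocycle, then $\partial^{\lambda}(\theta\Cap\phi)=(-1)^p(\partial^{\lambda}\theta\Cap\phi-\theta\Cap\delta^{\lambda}\phi)=0$, so $\theta\Cap\phi$ is a cycle. If $\theta=\partial^{\lambda}\omega$ and $\phi$ is a cocycle, the same formula applied to $\omega$ gives $\theta\Cap\phi=(-1)^p\partial^{\lambda}(\omega\Cap\phi)$, which is a boundary. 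If $\theta$ is a cycle and $\phi=\delta^{\lambda}\chi$, then $\theta\Cap\phi=\pm\partial^{\lambda}(\theta\Cap\chi)$, again a boundary. So $[\theta]\Cap[\phi]:=[\theta\Cap\phi]$ depends only on the classes.

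Second, I will recall that Lemma~\ref{Lem:Cobound} shows the weighted cup product $[\phi]\Cup[\psi]=[\phi\Cup\psi]$ is well defined on cohomology. All of this works with $R$-coefficients, since the chain-level formulas and the integrality of the coefficients (the weight $\xi$ is descending) are unchanged.

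Third, I will choose representatives: a cycle $\theta$ with $[\theta]$ the given class, and cocycles $\phi,\psi$. Then $([\theta]\Cap[\phi])\Cap[\psi]=[(\theta\Cap\phi)\Cap\psi]$, because $\theta\Cap\phi$ is a cycle representing $[\theta]\Cap[\phi]$. Likewise $[\theta]\Cap([\phi]\Cup[\psi])=[\theta\Cap(\phi\Cup\psi)]$, because $\phi\Cup\psi$ is a cocycle representing $[\phi]\Cup[\psi]$. Lemma~\ref{Lem:Cap-Cup-Compatible} says these two chains are equal. Its proof is stated for a single simplex, so I will extend it $R$-bilinearly to arbitrary chains. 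This gives the equality of classes.

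The only real obstacle is the well-definedness of the induced products, and that is already packaged in~\eqref{Equ:Cap-Bound-Rel} and Lemma~\ref{Lem:Cobound}. So the argument is essentially bookkeeping.
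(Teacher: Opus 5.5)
Your proposal is correct and follows the paper's route. The paper obtains the corollary ``immediately'' from the chain-level identity of Lemma~\ref{Lem:Cap-Cup-Compatible}, with $\Cap$ and $\Cup$ descending to (co)homology via~\eqref{Equ:Cap-Bound-Rel} and Lemma~\ref{Lem:Cobound}; your explicit cycle/boundary checks are just the bookkeeping the paper leaves implicit.
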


By the above corollary, the product $\Cap$ induces a right $\mathcal{H}_W^*(X,\lambda;R)$-module structure on $\mathcal{H}^W_*(X,\lambda;R)$.
 \n

\begin{rem} \label{Rem:Cohomology}
Suppose $(K,\mu)$ is a divisibly weighted triangulation of a weighted polyhedron $(X,\lambda)$. Since the chain map
$\Xi^K : \big( C_*(K),\partial^{\mu} \big) \rightarrow \big(\mathcal{W}_*(X,\lambda),\partial^{\lambda} \big)$ induces an isomorphism from $H_*(K,\partial^{\mu})$ to $\mathcal{H}^{W}_*(X,\lambda)$ , we can parallelly define
the simplicial version of weighted cup product and
weighted cap product for $K$. 
\end{rem}
  
  \vskip 0.4cm

  \section{Relation to orbifold theories}
  \label{Sec:Relation}
  In this section, we will show that an orbifold is naturally a weighted polyhedron. Moreover, we will interpret some known theories of orbifolds by our weighted 
  singular homology and cohomology.\n
  
 An orbifold is a topological space which is locally a finite group quotient of a Euclidean space. More specifically,  let $M$ be a paracompact Hausdorff space.      
   An \emph{orbifold chart} on $M$ is given by a connected open subset $\tilde{U}$ of $\R^n$ for some integer $n\geq 0$, a finite group $G$ acting smoothly and effectively on $\tilde{U}$, and a map $\varphi: \tilde{U}\rightarrow M$, such that $\varphi$ is $G$-invariant ($\varphi\circ g=\varphi$ for all $g\in G$) and induces a homeomorphism from $\tilde{U}\slash G$ onto an
 open subset $U=\varphi(\tilde{U})$ of $M$.
 An \emph{orbifold atlas} on $M$ is a family 
 $\mathcal{U}=\{ (\tilde{U},G,\varphi) \}$ of such charts, which cover $M$ and satisfy some local compatible conditions. Two orbifold atlases on $M$ are said to be 
 \emph{equivalent} if they have a common refinement. 
 An \emph{orbifold} (of dimension $n$) is such a space $M$ with an equivalence class of atlases $\mathcal{U}$. 
 By the fact that a smooth action is locally smooth (see Bredon~\cite[p.\,308]{Bredon72}), any
orbifold of dimension $n$ has an atlas consisting of ``linear'' charts, i.e. charts of
the form $(\R^n, G,\varphi)$ where the finite group $G$ acts on $\R^n$ via orthogonal linear transformations. 
 The reader is referred to~\cite{Sa56,Sa57,AdemLeiRuan07,Choi12} for more detailed discussion of these definitions and some other notions 
  in the study of orbifolds.\n 
 
  Let $\mathcal{M} = (M,\mathcal{U})$ be an orbifold of dimension $n$ where $\mathcal{U}$ is an atlas consisting of linear charts. For each point $x \in M$, choose a linear chart $(\R^n, G,\varphi)$ around $x$, with $G$ a finite subgroup of the orthogonal
group $\mathrm{O}(n,\R)$. Let $\tilde{x}$ be a point
with $\varphi(\tilde{x})=x$, and $G_x=\{ g\in G\,|\, g\cdot \tilde{x} = \tilde{x}\}$ the \emph{isotropy subgroup} 
at $\tilde{x}$. Up to conjugation, $G_x$ is a well defined
subgroup of $\mathrm{O}(n,\R)$, called the \emph{local group} at $x$. The order $|G_x|$ of $G_x$ is independent on the local chart around $x$.
   Then $\mathcal{M}$ canonically defines a weighted space $(M,\lambda_{\mathcal{M}})$ where
   for any point $x$ in $M$,
   $$\lambda_{\mathcal{M}}(x) =\text{the order $|G_x|$ of the local group $G_x$ of}\ x.$$

 Next, we show that $(M,\lambda_{\mathcal{M}})$ is a weighted polyhedron. It is well known that (see Goresky~\cite{Gor78} and Verona~\cite{Veron84}) there exists a triangulation $\mathcal{T}$ of $M$ such that the closure of every stratum of $M$ is a simplicial subcomplex of $\mathcal{T}$. Then the relative interior of each simplex of $\mathcal{T}$ is contained in a single stratum of $M$. A detailed proof of this result can be found in Choi~\cite[Section\,4.5]{Choi12}.
By replacing $\mathcal{T}$ by a stellar subdivision, one can assume that
the cover of closed simplices in $\mathcal{T}$ refines the cover of $M$ induced by the atlas $\mathcal{U}$.
For a simplex $\sigma$ in such a triangulation, the isotropy groups of all the interior points
of $\sigma$ are the same, and are subgroups of the isotropy groups of the boundary
points of $\sigma$. By taking a further subdivision of $\mathcal{T}$, we may assume that for any simplex $\sigma$ in $\mathcal{T}$,

\begin{itemize}
 \item[($\scalebox{1.3}{$\star$}$)] there is
one face $\tau$ of $\sigma$ such that the local group is constant on $\sigma\backslash \tau$, and possibly larger
on $\tau$. 
\end{itemize}

We call such a triangulation $\mathcal{T}$ \emph{adapted to $\mathcal{U}$} (see~\cite{MoePro99}). By abuse of notation, we also use $\mathcal{T}$
to refer to the simplicial complex defined by $\mathcal{T}$.  
  \n

  \begin{prop}[{\cite[Proposition 1.2.1]{MoePro99}}]
  For any orbifold $\mathcal{M} = (M,\mathcal{U})$, there
  always exists an adapted triangulation.
\end{prop}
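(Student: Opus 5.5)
Starting from the Goresky--Verona triangulation $\mathcal{T}_0$ of $M$, in which the closure of every stratum is a subcomplex (as recalled above, with details in Choi~\cite[Section\,4.5]{Choi12}), I would build an adapted triangulation in two steps: first refine $\mathcal{T}_0$ until every closed simplex lies inside the image of some chart of $\mathcal{U}$, then take one barycentric subdivision to obtain ($\star$).

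\textbf{Step 1: refinement.} Iterated barycentric subdivision of a locally finite complex makes the mesh of each closed simplex arbitrarily small with respect to any metric compatible with the topology. Since $M$ is paracompact, the charts $\{U=\varphi(\tilde U)\}$ have a Lebesgue-type refinement on each compact piece. Concretely, for each simplex of $\mathcal{T}_0$ choose a number of subdivisions large enough that its small simplices lie in chart images; to keep the result a simplicial complex, do the subdivision locally finitely. One way is to subdivide the star of each vertex only as many times as needed and then pass to a common subdivision, which exists by standard PL theory. Every subdivision preserves the property that stratum closures are subcomplexes, because a subdivision of a subcomplex is a subcomplex. Hence the relative interior of each simplex still lies in a single stratum. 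This gives a triangulation $\mathcal{T}_1$ refining $\mathcal{U}$ whose open simplices lie in strata.

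\textbf{Step 2: condition ($\star$).} On $\mathcal{T}_1$, the function $\sigma\mapsto |G_x|$ for $x\in|\sigma|^\circ$ is well defined and ascending under passing to faces: isotropy at boundary points contains isotropy at interior points, via the linear chart containing $\sigma$. So $(\mathcal{T}_1,\lambda_{\mathcal{M}})$ is a descending-type weighted triangulation, and the local groups are monotone along face inclusions. Now pass to $Sd(\mathcal{T}_1)$. A simplex $[b_{\sigma_0},\dots,b_{\sigma_l}]$ with $\sigma_0\subsetneq\cdots\subsetneq\sigma_l$ has its points lying in the interiors of the $\sigma_i$. Let $m$ be the smallest index with $G_{\sigma_m}=G_{\sigma_l}$, where $G_{\sigma}$ denotes the local group on $|\sigma|^\circ$ (well defined up to conjugation within one chart). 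Take $\tau=[b_{\sigma_0},\dots,b_{\sigma_{m-1}}]$. Every point of the simplex off $\tau$ has a positive barycentric coordinate on some $b_{\sigma_i}$ with $i\ge m$, so it lies in the interior of some $\sigma_j$ with $j\ge m$, where the local group is $G_{\sigma_l}$ by monotonicity. On $\tau$ the local groups are possibly larger. This is exactly ($\star$), and the refinement of $\mathcal{U}$ survives subdivision.

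\textbf{Main obstacle.} The delicate point is Step 1 for noncompact $M$. There one needs a locally finite, nonuniform subdivision that is still simplicial, and the statement that isotropy is constant, not merely of constant order, along an open simplex inside one chart. For the first issue I would invoke the standard fact that any open cover of a locally finite polyhedron is refined by some subdivision (Munkres-style, via a subdivision of bounded mesh relative to a continuous positive function). For the second, the Goresky/Verona stratification by conjugacy class of isotropy handles it directly. Alternatively, one can simply cite Moerdijk--Pronk~\cite{MoePro99}, as the statement does.
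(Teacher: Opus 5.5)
Your proposal is correct and follows the same route the paper sketches before citing Moerdijk--Pronk. You start from a Goresky--Verona triangulation whose stratum closures are subcomplexes, subdivide until closed simplices lie in chart images, and then subdivide once more to obtain ($\star$). Your explicit choice $\tau=[b_{\sigma_0},\dots,b_{\sigma_{m-1}}]$ in $Sd(\mathcal{T}_1)$ shows that a single barycentric subdivision suffices, which the paper leaves implicit. It rests on the observation that a point lies in $|\sigma_j|^\circ$ for $j$ the largest index with positive barycentric coordinate.
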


Note that any simplex $\sigma$ in a triangulation $\mathcal{T}$ adapted to $\mathcal{M}$ will have a vertex $v \in \sigma$ with the minimal
local group, i.e. $G_v \subseteq G_x$, for all $x \in \sigma$. Let
  \begin{equation} \label{Equ:weight-orbifold}
     \mathbf{w}(\sigma) = \mathrm{min}\{ |G_{v}|\,;\, v
      \ \text{is a vertex of}\  \sigma \}.
  \end{equation} 
  
    \begin{lem} \label{Lem:vertex_order}
  For any simplex $\sigma$ in a triangulation $\mathcal{T}$ adapted to $\mathcal{M}$, we can order the vertices of $\sigma$ 
   to be $\{v_0,\cdots,v_k\}$ so that 
   $\mathbf{w}(\sigma)=\mathbf{w}(v_0) \mid  \cdots \mid \mathbf{w}(v_k)$. So for any face $\sigma'$ of 
  $\sigma$ in $\mathcal{T}$, we have $\mathbf{w}(\sigma) \mid \mathbf{w}(\sigma')$.
   \end{lem}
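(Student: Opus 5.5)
The plan is to use the subgroup structure of local groups along a simplex, read off from the adapted-triangulation property ($\scalebox{1.3}{$\star$}$), and then convert inclusions of groups into divisibilities of orders via Lagrange's theorem. Here $\mathbf{w}(v)=|G_v|$ for a vertex $v$, consistent with \eqref{Equ:weight-orbifold} applied to a $0$-simplex.

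\textbf{Step 1: a chain of local groups.} I will argue by induction on $\dim\sigma$ that the vertices of $\sigma$ can be ordered as $v_0,\dots,v_k$ with $G_{v_0}\subseteq G_{v_1}\subseteq\cdots\subseteq G_{v_k}$, up to conjugation inside a single chart. Since the closed simplices refine the atlas, $\sigma$ lies in the image of one linear chart $(\R^n,G,\varphi)$. I will lift $\sigma$ to $\tilde\sigma\subseteq\R^n$; this is possible because $\sigma$ is contractible and the isotropy type is constant on relative interiors of faces. Along the lift, the local groups are genuine subgroups of $G$.
\begin{itemize}
\item By ($\scalebox{1.3}{$\star$}$), there is a face $\tau$ of $\sigma$ such that the local group is a constant $H$ on $\sigma\backslash\tau$.
\item Points of $\tau$ are limits of points in $\sigma\backslash\tau$, and isotropy groups can only grow under limits. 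Hence $H\subseteq G_x$ for all $x\in\tau$, and in particular for the vertices of $\tau$.
\item Every vertex of $\sigma$ not in $\tau$ lies in $\sigma\backslash\tau$, so it has group exactly $H$. There is at least one such vertex when $\tau\neq\sigma$; if $\tau=\sigma$, the condition holds for any face, so I choose $\tau$ proper.
\item Applying induction to $\tau$ orders its vertices in an increasing chain. Placing the vertices of $\sigma\backslash\tau$ first, all with group $H$, and then the ordered vertices of $\tau$ gives the desired chain.
\end{itemize}

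\textbf{Step 2: from groups to weights.} By Lagrange's theorem, $|G_{v_0}|\mid|G_{v_1}|\mid\cdots\mid|G_{v_k}|$. Because $G_{v_0}$ is the smallest group in the chain, its order equals $\min_i|G_{v_i}|$, so $\mathbf{w}(\sigma)=\mathbf{w}(v_0)$ by \eqref{Equ:weight-orbifold}.

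\textbf{Step 3: faces.} Let $\sigma'$ be a face of $\sigma$. The induced order on its vertices is still a divisibility chain, so $\mathbf{w}(\sigma')$ is the weight $\mathbf{w}(v_j)$ of its first vertex $v_j$. Since $\mathbf{w}(v_0)\mid\mathbf{w}(v_j)$, we get $\mathbf{w}(\sigma)\mid\mathbf{w}(\sigma')$.

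The main obstacle is Step 1: showing that local groups at different points of $\sigma$ sit in one group as actual subgroups, rather than only up to conjugacy. I will handle this through the single-chart lift and the standard fact that for a linear action, nearby points have isotropy contained in $G_{\tilde x}$. The base case, a vertex, is trivial.
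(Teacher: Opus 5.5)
Your proposal is correct and is essentially the paper's argument: your induction on $\dim\sigma$ is the paper's iteration of ($\star$) (first to $\sigma$, then to $\tau$, then to $\tau'$, \dots), which produces $G_{v_0}\subseteq\cdots\subseteq G_{v_k}$; both arguments then pass to orders and read off $\mathbf{w}(\sigma')$ from the first vertex of $\sigma'$ in the induced order. The paper takes these inclusions directly from the setup preceding ($\star$), whereas you justify them via upper semicontinuity of isotropy in a single chart; the lift you flag as the main obstacle is not actually needed, since containment up to conjugacy already gives the required divisibility of orders.
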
 
   \begin{proof}
   If the local group of $\mathcal{M}$ on the simplex $\sigma$ is constant, the lemma clearly holds.   
    Otherwise, there exists a proper face $\tau$ of $\sigma$ such that
     the local group is constant on $\sigma\backslash \tau$, and larger on $\tau$.  Let $\{v_0,\cdots, v_{s-1}\}$ be all the vertices of $\sigma\backslash \tau$ and $\{v_{s},\cdots, v_k\}$ be all the vertices of $\tau$. If the local group is constant on $\tau$, then 
     we have 
     $G_{v_0}=\cdots = G_{v_{s-1}}\subsetneq G_{v_s} = \cdots = G_{v_k}$.     
     Otherwise,  there exists a proper face $\tau'$ of $\tau$ such that
     the local group is constant on $\tau\backslash \tau'$, and larger on $\tau'$. By reordering the vertices of $\tau$ if necessary, we can assume that $v_{s},\cdots, v_{s'}$ are all the vertices of $\tau\backslash \tau'$ where $s \leq s'< k$. So
      $G_{v_0}=\cdots = G_{v_{s-1}}\subsetneq G_{v_{s}} = \cdots = G_{v_{s'}}$. By iterating the above argument, we can order the vertices of $\sigma$ to be 
      $\{v_0,\cdots,v_k\}$ so that $G_{v_0}\subseteq \cdots \subseteq G_{v_k}$. Then we have 
      $\mathbf{w}(\sigma)=\mathbf{w}(v_0) \mid  \cdots \mid \mathbf{w}(v_k)$ since $v_0$ is the vertex with the minimal local group. \n
      
      If $\sigma'$ is a face of $\sigma$, then the vertex set of $\sigma'$ is a subset of $\{v_0,\cdots,v_k\}$, say
      $\{v_{i_0},\cdots, v_{i_s}\}$, $0\leq i_0 <\cdots < i_s \leq k$. Then
      $ \mathbf{w}(\sigma)=\mathbf{w}(v_0) \mid \mathbf{w}(v_{i_0})=\mathbf{w}(\sigma')$.       
   \end{proof}
   \n

   \begin{prop} \label{Prop:Weight-Equal}
  For any orbifold $\mathcal{M} = (M,\mathcal{U})$, we have 
   $|G_x| = \mathbf{w}(\mathrm{Car}_{\mathcal{T}}(x))$ for each point $x\in M$.
   So $(M,\lambda_{\mathcal{M}})$ is a weighted polyhedron of descending type. 
   \end{prop}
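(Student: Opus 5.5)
Fix a triangulation $\mathcal{T}$ adapted to $\mathcal{U}$, and let $\mathbf{w}$ be the weight defined in~\eqref{Equ:weight-orbifold}. The plan has three steps. First, $(\mathcal{T},\mathbf{w})$ is a descending type weighted simplicial complex. Second, its geometric realization $\lambda_{\mathbf{w}}$ coincides with $\lambda_{\mathcal{M}}$. Third, Definition~\ref{Def:Weighted-Polyhedron} then applies, with the identity map $M=|\mathcal{T}|$ serving as the weight-preserving homeomorphism. The first step is exactly Lemma~\ref{Lem:vertex_order}: for every face $\sigma'\subseteq\sigma$ it gives $\mathbf{w}(\sigma)\mid\mathbf{w}(\sigma')$, so $\mathbf{w}$ is a descending weight. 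It even shows $\mathbf{w}$ is divisible.

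For the second step, fix $x\in M$ and put $\sigma=\mathrm{Car}_{\mathcal{T}}(x)$, so $x$ lies in the relative interior of $|\sigma|$. Order the vertices as in Lemma~\ref{Lem:vertex_order}, so that $G_{v_0}\subseteq\cdots\subseteq G_{v_k}$ up to conjugation and $\mathbf{w}(\sigma)=|G_{v_0}|$. We must show $|G_x|=|G_{v_0}|$. Apply condition ($\star$) to $\sigma$. It gives a face $\tau$ such that the local group is constant on $\sigma\backslash\tau$ and possibly larger on $\tau$. If the local group is constant on all of $\sigma$, then $G_x=G_{v_0}$ and we are done. Otherwise $\tau$ is a proper face. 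The relative interior of $\sigma$ is contained in $\sigma\backslash\tau$, so $x\in\sigma\backslash\tau$. The vertex $v_0$, having the minimal local group, is not in $\tau$, because groups on $\tau$ are strictly larger than the constant group on $\sigma\backslash\tau$. Hence $v_0\in\sigma\backslash\tau$ as well, and $G_x$ and $G_{v_0}$ are equal (conjugate in the chart), giving $|G_x|=|G_{v_0}|=\mathbf{w}(\sigma)$.

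An alternative route to the same equality uses the stratification property of $\mathcal{T}$. The open simplex lies in a single stratum, so the local group is constant on it, and the isotropy groups of the interior points of $\sigma$ are subgroups of those at the boundary points. Thus every vertex $v$ satisfies $G_x\subseteq G_v$, so $|G_x|$ divides each $|G_v|$. For the reverse direction, I would use ($\star$) as above to produce a vertex lying in the closure of the constant-group region $\sigma\backslash\tau$.

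The main obstacle is justifying that some vertex lies in $\sigma\backslash\tau$ and carries exactly the generic group of the open simplex. Since $\tau$ is a proper face, some vertex of $\sigma$ is not in $\tau$, and ($\star$) says the group is constant on $\sigma\backslash\tau$, a set containing both that vertex and the interior. I would make this explicit, using that the isotropy group orders are locally upper semicontinuous and constant along strata in a linear chart, which is available because $\mathcal{T}$ refines the atlas $\mathcal{U}$. Combining these facts gives $\lambda_{\mathcal{M}}(x)=\mathbf{w}(\mathrm{Car}_{\mathcal{T}}(x))=\lambda_{\mathbf{w}}(x)$, and hence $(M,\lambda_{\mathcal{M}})$ is a descending type weighted polyhedron.
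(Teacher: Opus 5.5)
Your proposal is correct and follows essentially the same route as the paper. Lemma~\ref{Lem:vertex_order} makes $\mathbf{w}$ a divisible descending weight, and property ($\star$) applied to $\mathrm{Car}_{\mathcal{T}}(x)$ places both $x$ and the minimal vertex $v_0$ in the constant-group region $\sigma\backslash\tau$, so $|G_x|=\mathbf{w}(v_0)=\mathbf{w}(\mathrm{Car}_{\mathcal{T}}(x))$; your closing remark (some vertex off $\tau$ carries the generic group, while every vertex group contains $G_x$) justifies ``$v_0\notin\tau$'' better than your phrase ``strictly larger'', since ($\star$) only promises ``possibly larger''.
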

\begin{proof}
 Take an adapted triangulation $\mathcal{T}$ of $\mathcal{M}$. 
   By Lemma~\ref{Lem:vertex_order}, $\mathbf{w}$ is a divisible weight of descending type on $\mathcal{T}$.
   For a point $x\in M$, let the vertex set of 
 $\mathrm{Car}_{\mathcal{T}}(x)$ be $\{v_0,\cdots, v_k\}$
 where $\mathbf{w}(v_0)\, |\, \cdots\, | \, \mathbf{w}(v_k)$. If the local group is constant on $\mathrm{Car}_{\mathcal{T}}(x)$, the lemma clearly holds.
 Otherwise, by the property ($\scalebox{1.3}{$\star$}$) of $\mathcal{T}$, there exists a proper face $\tau$ of 
 $\mathrm{Car}_{\mathcal{T}}(x)$ such that
 the local group is constant on $\mathrm{Car}_{\mathcal{T}}(x)\backslash \tau$. This implies $v_0\notin \tau$. On the other hand,  since $x$ is in the relative interior of $\mathrm{Car}_{\mathcal{T}}(x)$, we also have
 $x\notin \tau$. Hence $|G_x| = \mathbf{w}(v_0)= \mathbf{w}(\mathrm{Car}_{\mathcal{T}}(x))$ by the definition of $\mathbf{w}$ in~\eqref{Equ:weight-orbifold}.
 This implies that $(\mathcal{T},\mathbf{w})$ is a divisibly weighted triangulation of  $(M,\lambda_{\mathcal{M}})$.
\end{proof}

\subsection{$ws$-singular cohomology}
\ \n
For an orbifold $\mathcal{M} = (M,\mathcal{U})$, the space  $M$ carries a natural
stratification whose strata are the connected components of the sets
 $$\Sigma_H (M) := \{ x\in M\,|\, (G_x)=(H)\},$$
  where $H$ is any finite subgroup of $\mathrm{O}(n,\R)$
  and $(H)$ is its conjugacy class.\n

In~\cite[Section 2]{Yok07}, Takeuchi and Yokoyama introduce the following notion. An $n$-dimensional \emph{$s$-singular simplex} of $\mathcal{M}$ is a continuous map $\theta: |\Delta^n| \to M$ such that, for each $k$-face $\sigma$ of $\Delta^n$, $0\leq k \leq n$:
 \begin{enumerate}
     \item $\theta|_{|\sigma|^{\circ}}$ is contained in a single stratum of $\mathcal{M}$;
     \item there exists at most one $(k-1)$-face $\tau$ of $\sigma$ such that $\theta|_{|\tau|^{\circ}}$ lies in a stratum with higher order local group. 
 \end{enumerate}
 Since by definition $\theta(|\sigma|^{\circ})$ is contained in a single stratum of $\mathcal{M}$, the value of the weight function $\lambda_{\mathcal{M}}$ is constant on $\theta(|\sigma|^{\circ})$. Then it is meaningful to define a weight function $\mu_{\theta}$ on $\Delta^n$ by: for any face $\sigma$ of $\Delta^n$,
$$ \mu_{\theta}(\sigma) := \lambda_{\mathcal{M}} (\theta(x)) \ \text{where $x\in |\sigma|^{\circ}$}.$$
The weight $\mu_{\theta}$ on $\Delta^n$ is divisible and is of descending type because of the nature of $\lambda_{\mathcal{M}}$. Then we obtain a
weight-preserving singular $n$-simplex
 $$ \widetilde{\theta} : (|\Delta^n|,\lambda_{\mu_{\theta}}) \rightarrow (M,\lambda_{\mathcal{M}}).$$\n

In~\cite{Yok07}, the $s$-singular chain complex $s\text{-}C_*(\mathcal{M})$ is defined to be the free abelian group
generated by all the $s$-singular simplices with
the boundary operator $\partial$ defined in the same manner as the ordinary singular chain complex.
If we identify any $s$-singular simplex $\theta$ with its associated weight-preserving singular simplex $\widetilde{\theta}$,
the chain groups $\mathcal{WP}_n(M, \lambda_{\mathcal{M}})$ and $s\text{-}C_n(\mathcal{M})$ are isomorphic. 
 But the boundary operator $\partial$ on $s\text{-}C_*(\mathcal{M})$ does not involve the weight $\lambda_{\mathcal{M}}$ while the boundary operator $\partial^{\lambda_{\mathcal{M}}}$ on $\mathcal{WP}_*(M, \lambda_{\mathcal{M}})$ does. This makes their homology groups different.\n

Let $(s\text{-}C^*(\mathcal{M}),\delta)$ be cochain complex associated to $(s\text{-}C_*(\mathcal{M}),\partial)$. Moreover, \cite[Section 3]{Yok07} introduces 
a special sub-cochain complex of $s\text{-}C^*(\mathcal{M})$ as follows:
\begin{align*}
    ws \text{-} C^n(M)  :=  \{\phi\in s\text{-}C^n(\mathcal{M}) \mid \phi(\theta) \in \mu_{\theta}(\Delta^n) \Z \ & \text{for any  $s$-simplex} \ \theta: |\Delta^n| \rightarrow M \}.
\end{align*}
The cohomology of $(ws\text{-}C^*(\mathcal{M}),\delta)$ is called the \emph{$ws$-singular cohomology} of $\mathcal{M}$.\n
On the other hand, from the perspective of weighted singular cohomology we can apply the $\mathrm{Hom}(-, \Z)$ functor to
$(\mathcal{WP}_*(M, \lambda_{\mathcal{M}} ), \partial^{\lambda_{\mathcal{M}}})$ and obtain a cochain complex, denoted by $(\mathcal{WP}^*(M, \lambda_{\mathcal{M}}), \delta^{\lambda_{\mathcal{M}}})$. It follows from 
Theorem~\ref{thm:iso-weight-preserving} that the cohomology of $(\mathcal{WP}^*(M, \lambda_{\mathcal{M}}),\delta^{\lambda_{\mathcal{M}}})$, denoted by $\mathcal{H}_{WP}^*(M, \lambda_{\mathcal{M}})$, is isomorphic to $\mathcal{H}_{W}^*(M, \lambda_{\mathcal{M}})$. Moreover,
we can define a linear map 
 $$\Pi: (\mathcal{WP}^*(M, \lambda_{\mathcal{M}}), \delta^{\lambda_{\mathcal{M}}}) \rightarrow (ws \text{-}C^*(\mathcal{M}), \delta)$$
 where for any $\phi \in \mathcal{WP}^n(M, \lambda_{\mathcal{M}})$ and any $s$-singular simplex $\theta: |\Delta^n| \to M$,
\begin{equation*}
    \Pi(\phi)(\theta):= \mu_{\theta}(\Delta^n) \phi(\widetilde{\theta}). 
\end{equation*}
Clearly $\Pi(\phi) \in ws \text{-} C^n(M)$. It is also easy to verify that
$ \Pi(\delta^{\lambda_{\mathcal{M}}} \phi) = \delta (\Pi(\phi))$.
So the map $\Pi$ is a chain isomorphism.
 This proves the following theorem.
 
\begin{thm} \label{thm:ws-isom}
    For any orbifold $\mathcal{M}=(M, \mathcal{U})$,
    there is an isomorphism 
    \begin{equation}
        \mathcal{H}_W^*(M, \lambda_{\mathcal{M}}) \cong ws \text{-} H^*(\mathcal{M}). 
    \end{equation}
\end{thm}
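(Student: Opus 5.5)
The plan is to pass through the weight-preserving cochain complex and then compare it directly with the $ws$-cochain complex via the map $\Pi$ defined just above the statement. By Theorem~\ref{thm:iso-weight-preserving}, the inclusion $i:(\mathcal{WP}_*(M,\lambda_{\mathcal{M}}),\partial^{\lambda_{\mathcal{M}}})\to(\mathcal{W}_*(M,\lambda_{\mathcal{M}}),\partial^{\lambda_{\mathcal{M}}})$ induces an isomorphism on homology. Both complexes are free abelian, so $i$ is a chain homotopy equivalence, and applying $\mathrm{Hom}(-,\Z)$ (equivalently, the universal coefficient theorem plus the five lemma) gives
\[
\mathcal{H}^*_W(M,\lambda_{\mathcal{M}})\cong\mathcal{H}^*_{WP}(M,\lambda_{\mathcal{M}}).
\]
Here $(M,\lambda_{\mathcal{M}})$ is a descending type weighted polyhedron by Proposition~\ref{Prop:Weight-Equal}, so Theorem~\ref{thm:iso-weight-preserving} applies. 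It therefore suffices to show that $\Pi$ is an isomorphism of cochain complexes onto $ws\text{-}C^*(\mathcal{M})$.

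\textbf{Step 1: matching of generators.} I first establish a bijection between $s$-singular $n$-simplices $\theta$ and weight-preserving weighted singular $n$-simplices. The forward map is $\theta\mapsto\widetilde\theta$. For the inverse, a weight-preserving $\widetilde\theta:(|\Delta^n|,\lambda_\xi)\to(M,\lambda_{\mathcal{M}})$ with $\xi$ divisible has underlying map $\theta$, and $\xi$ is recovered as $\mu_\theta$. The point to check is that the underlying map of a weight-preserving simplex satisfies conditions (1) and (2) of an $s$-simplex. Weight preservation makes $\lambda_{\mathcal{M}}\circ\theta$ constant on each open face. Divisibility with the descending ordering $v_0,\dots,v_k$ shows that within a face $\sigma$ only the face spanned by the vertices of strictly larger weight can carry higher weight, which gives (2).

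For (1), an open face mapped into a set of constant order $|G_x|$ should lie in a single stratum, using connectedness of the open face and local constancy of the conjugacy class of $G_x$ along a connected set of constant order in a linear chart. I will check this against the conventions of~\cite{Yok07}; if needed, I will simply adopt the identification of $s\text{-}C_n(\mathcal{M})$ with $\mathcal{WP}_n(M,\lambda_{\mathcal{M}})$ as asserted in the text.

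\textbf{Step 2: $\Pi$ is a bijective cochain map.} Injectivity of $\Pi$ is immediate, since $\mu_\theta(\Delta^n)\neq0$ and $\Z$ is torsion-free. For surjectivity, given $\psi\in ws\text{-}C^n$, the membership condition $\psi(\theta)\in\mu_\theta(\Delta^n)\Z$ lets me set $\phi(\widetilde\theta)=\psi(\theta)/\mu_\theta(\Delta^n)$, and then $\Pi(\phi)=\psi$.

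The cochain condition is the computation I expect to be the main (though routine) obstacle. For an $s$-simplex $\theta$ with $\xi=\mu_\theta$ I compute
\[
\Pi(\delta^{\lambda_{\mathcal{M}}}\phi)(\theta)=\xi(\Delta^n)\sum_j(-1)^j\frac{\xi(\partial_j\Delta^n)}{\xi(\Delta^n)}\phi(\widetilde\theta|_{\partial_j})=\sum_j(-1)^j\xi(\partial_j\Delta^n)\phi(\widetilde{\theta|_{\partial_j}}).
\]
This equals $\sum_j(-1)^j\Pi(\phi)(\theta|_{\partial_j})=\delta\Pi(\phi)(\theta)$. The computation needs two facts: that $\theta|_{\partial_j\Delta^n}$ is again an $s$-simplex, and that $\mu_{\theta|_{\partial_j}}=\mu_\theta|_{\partial_j\Delta^n}$, both of which follow from the definitions. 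The weight factor $\mu_\theta(\Delta^n)$ in $\Pi$ exactly cancels the descending coefficients $c^{\xi}_j$; this is the heart of the argument.

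\textbf{Conclusion.} Taking cohomology of the cochain isomorphism $\Pi$ gives $\mathcal{H}^*_{WP}(M,\lambda_{\mathcal{M}})\cong ws\text{-}H^*(\mathcal{M})$. Composing with the isomorphism $\mathcal{H}^*_W(M,\lambda_{\mathcal{M}})\cong\mathcal{H}^*_{WP}(M,\lambda_{\mathcal{M}})$ from the first paragraph yields the statement.
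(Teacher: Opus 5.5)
Your proposal is correct and follows the paper's own argument: use Theorem~\ref{thm:iso-weight-preserving} to replace $\mathcal{W}$ by $\mathcal{WP}$, then show that $\Pi(\phi)(\theta)=\mu_\theta(\Delta^n)\,\phi(\widetilde\theta)$ is a cochain isomorphism onto $ws\text{-}C^*(\mathcal{M})$. You also supply details the paper leaves implicit. These are the freeness argument that turns the homology isomorphism into a cohomology isomorphism, the bijection between $s$-simplices and weight-preserving simplices (conditions (1) and (2) versus divisibility), and the explicit cancellation of $\mu_\theta(\Delta^n)$ against the descending coefficients in $\delta^{\lambda_{\mathcal{M}}}$.
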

\n

Later, some explicit examples are computed to demonstrate the isomorphism in Theorem~\ref{thm:ws-isom} (see Example~\ref{Exam:surface-sing} and Remark~\ref{Rem:Check}).

\subsection{Generalized equivariant homology}
\ \n
For an orbifold $\mathcal{M}$, one can also obtain a homology theory by considering the homology group of the \textit{classifying space} $B\mathcal{M}$ (see \cite[Definition 1.56]{AdemLeiRuan07}). An orbifold $\mathcal{M}$ is called a \textit{good orbifold} if it is isomorphic to a global quotient of a smooth manifold $Y$ by the action of a discrete group $G$. 
In this case, the classifying space of $\mathcal{M}$ is given by the Borel construction $EG\times_G Y$ (see \cite[Proposition 1.51]{AdemLeiRuan07}). The homology group of $EG\times_G Y$ is also known as the \textit{Borel equivariant homology group} of the $G$-space $Y$. Such a homology theory may be viewed as a special case of a \textit{generalized equivariant homology theory}. In this subsection, we show that descending type weight homology is likewise a special case of generalized equivariant homology theory. For simplicity, we omit the term generalized and refer to such a theory simply as equivariant homology.

Parallelly to the ordinary homology theory, there exist 
both singular version and simplicial version of equivariant homology.
In \cite{Illman73}, Illman introduced the concept of \textit{equivariant singular (co)homology}. In \cite{Bredon67}, Bredon introduced a simplicial version of
the equivariant cohomology. Although Bredon did not formally introduce equivariant simplicial homology, it can be defined analogously. In \cite{Han11}, Hanson explicitly defined \textit{equivariant simplicial homology} and showed that it is equivalent to the equivariant singular homology defined in~\cite{Illman73}. Since
it is more direct to build the connection between weighted homology and equivariant homology, we present only the theory of equivariant simplicial homology in detail.
Most of the results in this subsection about equivariant homology are cited from \cite{Han08} and \cite{Han11}.

\begin{rem}\label{rem:finiteness}
We require $Y$ to be a manifold in the first paragraph above in order to obtain an orbifold from the global quotient. However, for the study of equivariant homology theory in the following, it suffices to assume that $G$ is a discrete group and that $Y$ is a topological space such that the isotropy group at each point is finite.
\end{rem}

The theory of equivariant simplicial homology is closely related to the notions of simplicial $G$-complex and equivariant triangulation. Roughly speaking, a simplicial complex $L$ is called a \textit{simplicial $G$-complex} if $L$ admits an action of the group $G$ such that: for each $n$-simplex $\sigma \in L$ and each $g\in G$, the simplex $g \sigma$ is defined and is also an $n$-simplex of $L$ (see \cite[Definition 3.4]{Han08}). We do not introduce this notion in full detail, since the abstract version defined below is more convenient for the development of equivariant simplicial homology.

\begin{defi}
    Let $V$ be a $G$–set. An \textit{abstract simplicial $G$–complex} is
a collection of finite nonempty subsets $L$ of $V$ satisfying (i) if $v\in V$, then ${v} \in L $, (ii) if $\sigma\in L$ and $\tau \subseteq \sigma$, then $\tau \in L$, (iii) if $\sigma \in L$, then $g\sigma\in L$ for all $g \in G$, and (iv) if $\sigma\in L$ and $g \in G_\sigma$, then $gv = v$ for all $v \in \sigma$. Here, the notation $G_{\sigma}$ denotes the subgroup of $G$ that preserves the subset $\sigma \subseteq V$.
\end{defi}

\begin{rem}
    The last condition in the above definition excludes the situation where some element $g\in G_{\sigma}$ permutes the vertex set of $\sigma$ nontrivially.
\end{rem}

For an abstract $G$-simplicial complex $L$, one can define the action of $G$ on its realization $|L|$, thereby making $|L|$ a $G$-space (see \cite[Section 4]{Han08}).
Then $L$ is called an \emph{equivariant triangulation} of $Y$ if $|L|$ is $G$-equivariantly homeomorphic to $Y$. A natural question concerning equivariant triangulations is under what conditions a $G$-space admits an equivariant triangulation. The main theorem in \cite{Illman83} provides an answer:

\begin{thm}[{\cite{Illman83}}]
    Let $M$ be a smooth $G$-manifold with or without boundary. Then there exists an equivariant triangulation of $M$.
\end{thm}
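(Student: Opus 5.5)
The plan is to follow the classical strategy behind Illman's theorem: triangulate the orbit space $M/G$ compatibly with the orbit type stratification, then lift the triangulation back to $M$ equivariantly. We work under the standing finiteness assumption of Remark~\ref{rem:finiteness}: the action is proper with finite isotropy groups, so the orbit map $\pi: M\to M/G$ is locally modelled on $\R^n\to\R^n/H$ with $H$ finite. This is exactly the orbifold situation treated above.

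\textbf{Step 1 (local models).} By the smooth slice theorem, every point $x\in M$ has a $G$-invariant neighbourhood of the form $G\times_{G_x}\R^n$, where $G_x$ acts orthogonally on $\R^n$. The images of these neighbourhoods give linear charts on $M/G$. The orbit type sets $\{\pi(y)\,|\,(G_y)=(H)\}$ are smooth submanifolds of $M/G$ and form a Whitney stratification.

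\textbf{Step 2 (triangulate the base).} Using the triangulation theorem for stratified spaces (Goresky~\cite{Gor78}, Verona~\cite{Veron84}; see also Choi~\cite[Section\,4.5]{Choi12}), we choose a triangulation $\mathcal{T}$ of $M/G$ with the following properties.
\begin{itemize}
\item The closure of every orbit type stratum is a subcomplex.
\item Every closed simplex lies in the image of a single slice chart.
\end{itemize}
After a barycentric subdivision, $\mathcal{T}$ also satisfies the analogue of condition ($\star$). In addition, every simplex $\sigma$ has a vertex ordering $v_0,\dots,v_k$ along which the isotropy groups increase up to conjugacy, exactly as in Lemma~\ref{Lem:vertex_order}.

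\textbf{Step 3 (lifting).} Fix a simplex $\sigma$ inside a chart $\R^n/H$. We want a lift $s_\sigma:|\sigma|\to\R^n$ that is a section of the quotient map. We build these lifts by induction over the skeleta of $\mathcal{T}$. For the inductive step, write $\sigma$ as the join of its open part and the face $\tau$ where isotropy jumps. The isotropy of the open part is a subgroup $K\le H_{\tilde v}$, and the open part is contained in the fixed set $(\R^n)^K$. On the relevant open set, $(\R^n)^K$ maps to $\R^n/H$ as a covering onto its image stratum. Since the simplex is contractible, a lift of its open part exists and is unique once the lift of one point is prescribed. Cone-like radial structure of the linear chart lets us extend this lift continuously to $\tau$, matching the lift of $\tau$ chosen at the previous stage.

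\textbf{Step 4 (assembling the $G$-complex).} Define the simplices of $L$ as all translates $g\cdot s_\sigma(\sigma)$ for $g\in G$. We then verify that $L$ is an abstract simplicial $G$-complex triangulating $M$. Axioms (i)–(iii) hold by construction. Axiom (iv) holds because, after the barycentric subdivision in Step 2, an element of $G_\sigma$ must fix the ordered vertices $v_0,\dots,v_k$, since their isotropy groups are distinct along a flag. Finally, since the lifts are sections, $|L|\to M$ is a $G$-equivariant bijective continuous map. By local compactness and properness of the action, it is a homeomorphism.

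\textbf{Main obstacle.} The hardest point is the consistency of the lifts in Step 3. Two translates $g\cdot s_\sigma(\sigma)$ and $g'\cdot s_{\sigma'}(\sigma')$ must meet exactly in a common face, that is, in the lift of a face of $\sigma\cap\sigma'$. If they meet in anything more, we do not get a simplicial complex. My first attempt at this would be to choose all lifts by induction on the isotropy-ordered skeleta, always extending lifts already fixed on faces. The uniqueness of lifts over contractible pieces of a single orbit type stratum, where $\pi$ restricted to $(\R^n)^K$ is a local homeomorphism, should force any two lifts agreeing at one point to agree everywhere. If this runs into trouble near points where the isotropy jumps, the fallback is Illman's own approach: equivariantly triangulate a smooth fundamental-domain decomposition using Whitehead's $C^1$-triangulation theorem applied chart by chart.
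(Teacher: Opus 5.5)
The paper does not prove this statement; it simply quotes Illman. So your sketch has to stand on its own, and at present it does not. The step you call the main obstacle is the real content of the theorem, and the mechanism you propose for it fails exactly where it is needed. Two lifts that agree at one point are forced to coincide only when that point lies in the \emph{generic} part of the simplex. Distinct lifts of a simplex can meet only over points of larger isotropy, so agreement at such a point proves nothing.

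The two properties you list in Step~2 do not prevent this. Let $\Z_2$ act on $\R^2$ by $(x,y)\mapsto(x,-y)$, so the orbit space is the closed upper half-plane. Take a triangulation, with the $x$-axis a subcomplex, containing the triangle with vertices $p=(0,0)$, $m=(1,0)$, $q=(2,0)$. Its edges $[p,m]$ and $[m,q]$ lie on the axis, and its third edge is the upper unit semicircle centred at $m$. Everything lies in one chart. Yet the two lifts of this triangle, the upper and lower half-discs, have the same vertex set and meet in $[p,m]\cup[m,q]$, which is not a face. So the translates do not form a simplicial complex, and $|L|\to M$ is not even well defined.

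What rules this out is $(\star)$, which you impose but never use. The missing argument runs as follows. Iterating $(\star)$ gives faces $\sigma=\tau_0\supsetneq\tau_1\supsetneq\cdots\supsetneq\tau_r$ with constant orbit type on each connected piece $\tau_i\setminus\tau_{i+1}$. On the part of a lift $\widetilde\sigma$ over each piece, the isotropy \emph{subgroup} is then constant, say $K_i$. This is because, by the slice theorem, nearby isotropy groups are subgroups of a given finite one, and a subgroup conjugate to it must equal it. By continuity, $K_0\subseteq K_1\subseteq\cdots\subseteq K_r$. Hence for every $h\in G$, the set $\mathrm{Fix}(h)\cap\widetilde\sigma$ is either empty or the single lifted face over $\tau_{i_0}$, where $i_0=\min\{i : h\in K_i\}$. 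Now suppose lifts $\widetilde\sigma,\widetilde\sigma'$ meet. Their intersection lies over the common face $\rho=\sigma\cap\sigma'$. Writing their parts over $\rho$ as $\widetilde\rho$ and $h\widetilde\rho$, the intersection equals $\widetilde\rho\cap h\widetilde\rho=\mathrm{Fix}(h)\cap\widetilde\rho$, which is a single common face. This is what Step~4 needs, both for the simplicial-complex axioms and for $|L|\to M$ to be a well-defined bijection.

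There are also some smaller problems.

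\textbf{The induction in your last paragraph.} Extending lifts already fixed on \emph{all} faces is impossible in general, because a lift is determined by its value at one generic point. For example, take the antipodal $\Z_2$-action on $S^1$ and triangulate $S^1/\Z_2$ as the boundary of a triangle. The nontrivial monodromy of the double cover forces some edge to miss a prescribed endpoint lift. The induction is also unnecessary. If you take all translates, as in Step~4, closure under faces already holds. Any two lifts of a simplex differ by an element of $G$, since $G$ is transitive on fibres and $\pi$ is a covering over each orbit-type stratum.

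\textbf{Axiom (iv).} Your reason is wrong: isotropy groups along a flag need not be distinct, and on the free part they are all trivial. The correct reason is that $\widetilde\sigma\to\sigma$ is bijective. So the vertices of $\widetilde\sigma$ lie in distinct orbits, and any $g$ preserving $\widetilde\sigma$ fixes each of them.

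\textbf{Scope.} Even when completed, your argument covers only proper actions with finite isotropy. You are right to assume properness: an irrational rotation of $\Z$ on $S^1$ is free but admits no equivariant triangulation, since a simplicial action on a finite complex has finite order. The argument does not reach Illman's compact Lie group setting. Deferring the crux to ``Illman's own approach'' amounts to citing the theorem.
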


\begin{rem}
    Here we adopt the definition of equivariant triangulation given by Hanson in \cite{Han08}. Although it differs literally from that given by Illman in \cite{Illman83}, the comment in the last section of \cite{Han08} indicates that these two definitions are actually equivalent.
\end{rem}

Now, we begin to introduce the concept of the \textit{equivariant simplicial chain complex}. 
Let $\mathcal{O}(G)$ denote the category whose objects are the left cosets of $G$. There are two kinds of morphisms in this category:
\begin{itemize}
    \item For $t\in G$ and $H\leq G$, define $\mu(t, H): G/H^t \to G/H$ by $gH^{t} \mapsto gtH$. Here, $H^t:=tHt^{-1}$.
    \item For $H_1\leq H_2 \leq G$, define $\kappa(H_2, H_1): G/H_1 \to G/H_2$ by $gH_1\mapsto gH_2$. 
\end{itemize}
Let $R$ be a ring. 
The definition of an equivariant simplicial complex relies on \textit{a covariant $R$–coefficient system} for $G$, namely a covariant functor $$\mathbf{m}: \mathcal{O}(G)\to R\text{-}mod.$$

\begin{rem}
    In the category $\mathcal{O}(G)$, the morphism $\mu(t, H)$ corresponds to the $G$-action on an orbit of $M$ that translates a point $x\in M$ to $t\cdot x$.
\end{rem}

\begin{defi}
    Let $L$ be an abstract simplicial $G$-complex, and let $\mathbf{m}$ be a covariant $R$-coefficient system for $G$. For each $\sigma \in L$, 
    \begin{equation}\label{eq:m[s]}
        \mathbf{m}[\sigma]:= \Bigl( \bigoplus_{gG_{\sigma} \in G/G_{\sigma}} \mathbf{m}(G/G^g_{\sigma}) \otimes_R R\langle g\sigma \rangle \Bigr)/ J_{\mathbf{m}}[\sigma].
    \end{equation}
    Here, for $g\in G$, $G_{\sigma}^g$ denotes the subgroup $g G_{\sigma} g^{-1}$, $R\langle g\sigma \rangle$ denotes the free $R$-module generated by $g\sigma\in L$, and $J_{\mathbf{m}}[\sigma]$ denotes the submodule generated by all elements of the form 
    \begin{equation}\label{eq:module-relation}
        x\otimes g\sigma- \mathbf{m}\bigl(\mu(g,G_{\sigma})\bigr) (x)  \otimes \sigma, 
    \end{equation}
    where $x \in \mathbf{m}(G/G^g_{\sigma})$. We use square brackets $[x\otimes g\sigma]$ to denote the elements of $\mathbf{m}[\sigma]$ represented by $x\otimes g\sigma$.

    Let $L_n$ denote the set of $n$-simplices of $L$. We define the $n$-th \textit{equivariant simplicial chain group} of $L$ by 
    \begin{equation}\label{eq:eqi-chain-group}
        C_n(L; \mathbf{m}):= \bigoplus_{[\sigma] \subseteq L_n} \mathbf{m}[\sigma],
    \end{equation}
    where the direct sum ranges over the orbits $[\sigma]$ in $L_n$.
\end{defi}

\begin{rem}
    The $n$-th (ordinary) simplicial chain group of $L$ with coefficients in $R$ can be written as
    \begin{equation*}
        C_n(L;R)=\bigoplus_{\sigma \in L_n} R\langle \sigma \rangle
    \end{equation*}
    where each $n$-simplex generates a direct summand. 
    In the equivariant simplicial chain complex, however, not every simplex generates a direct summand. Rather, the $R$-modules of the form in \eqref{eq:m[s]} generated by simplices in the same orbit of $L_n$ are identified by the relation given in \eqref{eq:module-relation}.
\end{rem}

As usual, we assume that the vertex set $V(L)$ of $L$ is equipped with a total order, so that we may define the $j$-th face operator $\partial_j$ (see the definition below \eqref{Equ:weighted-Boun-AW}). In addition, we do not require such an order on $V(L)$ being preserved by the $G$-action.

\begin{defi}
 Define
    $\partial^G : C_n(L; \mathbf{m}) \to C_{n-1}(L; \mathbf{m})$
by extending 
\begin{equation}\label{eq:m(partial^G)}
    \partial^G([x \otimes g \sigma]) := \sum_{j=0}^n (-1)^j [\mathbf{m} \bigl(\kappa(G_{\partial_j\sigma}^g,G_{\sigma}^g)\bigr) (x) \otimes g(\partial_j \sigma)].
\end{equation}
 linearly, where
$\sigma \in L_n$ and $x\in \mathbf{m}(G/G_{\sigma})$.
\end{defi}
To verify that $\partial^G$ is well defined, it suffices to check that
\begin{equation*}
    \partial^G([x\otimes g\sigma]) = \partial^G ([\mathbf{m}\bigl(\mu(g, G_{\sigma})\bigr)(x) \otimes \sigma]),
\end{equation*}
which follows directly from the definition.

Let $R = \mathbb{Z}$. We define a functor $\mathbf{m}_{\Z}$ by assigning the group $\mathbb{Z}$ to every $G$-orbit, the identity map on $\Z$ to each $\mu(t, H)$, and multiplication by $|H_2/H_1|$ to each $\kappa(H_2, H_1)$ whenever the coset $H_2/H_1$ is finite. If the coset $H_2/H_1$ is infinite, we define $\mathbf{m}_{\Z}(\kappa(H_2, H_1))=0$. In the following, we focus on the case where $H_2$ and $H_1$ are both finite subgroups. Then we have
\begin{equation*}
    \mathbf{m}_{\Z}(\kappa(H_2, H_1)) = |H_2/H_1| = |H_2|/|H_1|.
\end{equation*}

To study the quotient of $L$ under the $G$-action within the category of simplicial complexes, we require the simplicial $G$-complex $L$ to be a \textit{regular $G$-complex} (see \cite[Chapter III, Section 1]{Bredon72}). Here we list some basic facts on regular $G$-complexes.
The reader is referred to~\cite{Bredon72} for more detailed discussion on this subject.
\begin{enumerate}
    \item[(a)] The second barycentric subdivision of any abstract simplicial $G$-complex is a regular $G$-complex;\n
    
    \item[(b)] The quotient of a regular $G$-complex under the $G$-action is an abstract simplicial complex;\n
    
    \item[(c)] In a regular $G$-complex, no two vertices of a simplex can lie in the same $G$-orbit. Consequently, the image of each $n$-simplex under the quotient map is also an $n$-simplex.
\end{enumerate}

Suppose $L$ is a regular $G$-complex and $K=L\slash G$ is the quotient of $L$ by the $G$-action. According to the fact (b) listed above, $K$ is a simplicial complex. Let $\pi: L\rightarrow K$ denote the quotient map which is clearly a simplicial map.
Define a weight function $\xi$ on $K$ by setting $\xi(\tau)$ to be the order of the isotropy group of any simplex in $\pi^{-1}(\tau)$. So for any simplex $\sigma$ of $L$, $\xi(\pi(\sigma))$ is the order of the local group $G_{\sigma}$ of $\sigma$.
It is easy to see that $(K, \xi)$ is a descending-type weighted simplicial complex. \n

Let $V(K)$ and $V(L)$ denote the vertex set of $K$ and $L$, respectively. Then from a total order on $V(K)$, we can define a total order on $V(L)$ so that the quotient map
$\pi$ is order-preserving, i.e. if $v<v'$ in $V(L)$, then $\pi(v)\leq \pi(v')$ in $V(K)$. Taking these orders into account, we can deduce from the above fact (c) that
\begin{equation}\label{eq:pi-com-jface}
    \partial_j \circ \pi = \pi \circ \partial_j, \quad 0\leq j \leq n.
\end{equation}

\begin{lem}
    Keeping the notations as above, we have a chain isomorphism:
    \begin{equation}
        (C_*(L; \mathbf{m}_{\Z}), \partial^G) \cong (C_*(K, \xi), \partial^{\xi}). 
    \end{equation}
\end{lem}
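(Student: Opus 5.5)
The plan is to write down an explicit map degree by degree and check that it is bijective and intertwines the boundaries. By definition, $C_n(L;\mathbf{m}_{\Z})=\bigoplus_{[\sigma]}\mathbf{m}_{\Z}[\sigma]$, where $[\sigma]$ runs over the $G$-orbits of $n$-simplices of $L$. Since $\mathbf{m}_{\Z}(\mu(g,G_\sigma))=\mathrm{id}_{\Z}$, the relation \eqref{eq:module-relation} identifies $[x\otimes g\sigma]$ with $[x\otimes\sigma]$. Hence $\mathbf{m}_{\Z}[\sigma]\cong\Z$, generated by $[1\otimes\sigma]$, and $[1\otimes g\sigma]=[1\otimes\sigma]$ for every $g\in G$. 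This generator depends only on the orbit: if we pick another representative $\sigma'=h\sigma$, then $[1\otimes\sigma']=[1\otimes\sigma]$.

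The orbits of $n$-simplices in $L$ correspond bijectively to the $n$-simplices of $K=L/G$ via $\pi$. This uses fact (c), so that $\pi(\sigma)$ is again an $n$-simplex, together with the definition of $K$ as the quotient. We therefore define
\[
\Phi_n: C_n(L;\mathbf{m}_{\Z})\to C_n(K),\qquad [1\otimes \sigma]\mapsto \pi(\sigma),
\]
where the orientation of $\pi(\sigma)$ is the one induced by the chosen total orders. This is well defined by the previous paragraph. It is an isomorphism of free abelian groups because it sends a basis bijectively onto a basis.

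To check that $\Phi$ is a chain map, take an $n$-simplex $\sigma$ of $L$. Formula \eqref{eq:m(partial^G)} with $g=e$ and $x=1$ gives
\[
\partial^G[1\otimes\sigma]=\sum_{j}(-1)^j\frac{|G_{\partial_j\sigma}|}{|G_\sigma|}[1\otimes\partial_j\sigma].
\]
Here $G_\sigma\le G_{\partial_j\sigma}$ holds because of condition (iv): elements of $G_\sigma$ fix all vertices of $\sigma$, hence fix $\partial_j\sigma$. By definition of $\xi$ we have $|G_\sigma|=\xi(\pi(\sigma))$ and $|G_{\partial_j\sigma}|=\xi(\pi(\partial_j\sigma))$. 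By \eqref{eq:pi-com-jface}, $\pi(\partial_j\sigma)=\partial_j\pi(\sigma)$. Applying $\Phi$ therefore gives
\[
\sum_j(-1)^j\frac{\xi(\partial_j\pi(\sigma))}{\xi(\pi(\sigma))}\,\partial_j\pi(\sigma)=\partial^{\xi}\pi(\sigma),
\]
which is exactly the descending-type boundary \eqref{Equ:weighted-Boun-DW}. So $\Phi\circ\partial^G=\partial^\xi\circ\Phi$.

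The main obstacle is bookkeeping rather than any conceptual step, and it has three parts. First, $\xi$ must be well defined: isotropy groups of simplices in one orbit are conjugate, hence have the same order. Second, the weight must be of descending type: this follows from $G_\sigma\le G_{\partial_j\sigma}$. Third, the orientations must be consistent. Because $\pi$ is order-preserving and injective on the vertices of each simplex (fact (c)), the ordered simplex $\sigma=[v_0,\dots,v_n]$ maps to the ordered simplex $[\pi v_0,\dots,\pi v_n]$ with the same vertex order. This is what makes \eqref{eq:pi-com-jface} hold with no extra signs. It is also why we choose the orbit representatives' orientations via the induced total order on $V(L)$ rather than via the $G$-action, which need not preserve the order.
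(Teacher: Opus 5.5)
Your proposal is correct and follows the paper's proof. The paper defines the same map $\pi_\sharp([k\otimes\sigma])=k\,\pi(\sigma)$, asserts that it is a linear isomorphism, and checks $\pi_\sharp\circ\partial^G=\partial^{\xi}\circ\pi_\sharp$ by the same computation, using $\xi(\pi(\sigma))=|G_\sigma|$ and \eqref{eq:pi-com-jface}. Your extra bookkeeping supplies details the paper calls ``clear'': well-definedness via $\mathbf{m}_{\Z}(\mu(g,G_\sigma))=\mathrm{id}$, $G_\sigma\le G_{\partial_j\sigma}$ from condition (iv), and orientation compatibility. One small correction: the injectivity of the map from orbits of $n$-simplices to $n$-simplices of $K$ comes from the regularity of $L$, not from fact (c) alone; the paper assumes this implicitly too.
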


\begin{proof}
    Note that the chain group $C_n(K, \xi)$ can be written as $\bigoplus_{\tau \in K_n} \Z \otimes_{\Z} \Z\langle \tau \rangle$. 
    Define $\pi_{\sharp}: C_n(L; \mathbf{m}_{\Z}) \to C_n(K; \Z)$ by: for any $n$-simplex $\sigma$ of  $L$,
    \begin{equation*}
        \pi_{\sharp}([k\otimes \sigma]) := k \otimes \pi(\sigma). 
    \end{equation*}
    Here, we need $L$ to be regular so that $\pi(\sigma)$ is an $n$-simplex of $K$. 
    It is clear that $\pi_{\sharp}$ is a well defined linear isomorphism. 

    Next, we verify that $\pi_{\sharp}\circ \partial^G ( [k\otimes \sigma])  = \partial^{\xi}\circ \pi_{\sharp} ([k\otimes \sigma])$ for any $n$-simplex $\sigma$ of $L$. 
    By \eqref{eq:m(partial^G)} and the definition of $\mathbf{m}_{\Z}$, we have 
    \begin{equation}  \label{Equ:pd-1}
        \pi_{\sharp}\bigl( \partial^G( [k\otimes \sigma ] ) \bigr) = \sum_j (-1)^j \frac{|G_{\partial_j \sigma}|}{|G_{\sigma}|}k \otimes \pi( \partial_j \sigma) . 
    \end{equation}
    On the other hand, we have 
    \begin{equation} \label{Equ:pd-2}
        \partial^{\xi}( \pi_{\sharp}([k\otimes \sigma]) )
        = \sum (-1)^j k \otimes \frac{\xi(\partial_j \pi(\sigma))}{\xi(\pi(\sigma))} \partial_j \pi(\sigma). 
    \end{equation}
   By the definition of $\xi$, we have $\xi(\pi(\sigma))=|G_{\sigma}|$. In addition, according to \eqref{eq:pi-com-jface} we have $\pi( \partial_j \sigma) = \partial_j \pi(\sigma)$. So we obtain
        $\xi(\partial_j \pi(\sigma))= \xi(\pi(\partial_j \sigma)) = |G_{\partial_j \sigma}|$. Then from~\eqref{Equ:pd-1} and~\eqref{Equ:pd-2}, we immediately obtain that
 $\pi_{\sharp}\circ \partial^{G} = \partial^{\xi} \circ \pi_{\sharp}$. Therefore, $\pi_{\sharp}: (C_*(L; \mathbf{m}_{\Z}), \partial^G) \to (C_*(K, \xi), \partial^{\xi})$ is a chain isomorphism.
\end{proof}

According to the above lemma, for a regular $G$-complex
$L$ and $K=L\slash G$,
\begin{equation}\label{eq:iso-eqivariant-simplicial}
    H_*(L; \mathbf{m}_{\Z}) \cong H_*(K, \partial^{\xi}). 
\end{equation}
 Assume a $G$-space $Y$ admits an equivariant triangulation $L$. Then one can define the equivariant simplicial homology of $Y$ as that of the $G$-simplicial complex $L$. In \cite{Han11}, Hanson proved that the equivariant simplicial homology and the equivariant singular homology are isomorphic. 
If we let $X=Y/G$, then $(X, \lambda_{\xi})$ is a descending type weighted polyhedron. Since \Cref{Thm:Main-Isom} shows that the weighted simplicial homology and the weighted singular homology are isomorphic, 
 we can immediately deduce the following theorem  from the isomorphism in \eqref{eq:iso-eqivariant-simplicial}.
 \begin{thm}
Let $G$ be a discrete group acting on a space $Y$. Assume that $Y$ admits an equivariant triangulation and that the isotropy group at each point is finite. Let $X=Y/G$, and define a weight function $\lambda$ on $X$ to be the order of the local group at each point. Then we have
     \begin{equation*}
         H_*(Y; \mathbf{m}_{\Z}) \cong \mathcal{H}_*^W(X, \lambda). 
     \end{equation*}
 \end{thm}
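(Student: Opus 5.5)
The plan is to chain three isomorphisms: Hanson's comparison on $Y$, the chain isomorphism $\pi_\sharp$ of the preceding lemma, and Theorem~\ref{Thm:Main-Isom} on $X$. The main work is to reduce to a regular $G$-complex and to identify $(|K|,\lambda_{\xi})$ with $(X,\lambda)$ as weighted polyhedra. I take $H_*(Y;\mathbf{m}_{\Z})$ to mean the equivariant simplicial homology of any equivariant triangulation of $Y$, which by~\cite{Han11} agrees with Illman's equivariant singular homology and is therefore independent of the triangulation.

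\textbf{Step 1: reduce to a regular complex.} Fix an equivariant triangulation $L_0$ of $Y$. By fact (a), the second barycentric subdivision $L:=Sd^2(L_0)$ is a regular $G$-complex, and it is still an equivariant triangulation of $Y$. The equivariant simplicial homology is unchanged: either quote the triangulation independence from~\cite{Han11}, or note that Hanson's comparison with singular theory applies equally to $L$. Hence $H_*(Y;\mathbf{m}_{\Z})\cong H_*(L;\mathbf{m}_{\Z})$, and by~\eqref{eq:iso-eqivariant-simplicial} this is $H_*(K,\partial^{\xi})$ with $K=L/G$.

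\textbf{Step 2: $(K,\xi)$ triangulates $(X,\lambda)$, and the weight is divisible.}
\begin{itemize}
\item Since $|L|\cong Y$ equivariantly and $L$ is regular, $|K|=|L|/G\cong Y/G=X$.
\item Take $x\in X$ and a lift $y\in|L|$ whose carrier simplex is $\sigma\in L$. Condition (iv) says $G_\sigma$ fixes $\sigma$ pointwise, so $G_\sigma\subseteq G_y$.
\item Conversely, any $g\in G_y$ maps the open simplex $|\sigma|^\circ$ to itself, so $g\sigma=\sigma$, and then $g\in G_\sigma$. Thus $G_y=G_\sigma$.
\item Therefore $\lambda(x)=|G_y|=|G_\sigma|=\xi(\pi(\sigma))=\lambda_{\xi}(x)$, since $\pi(\sigma)=\mathrm{Car}_K(x)$.
\item For divisibility, the vertices of $\sigma$ have isotropy groups containing $G_\sigma$, so $\xi$ is descending. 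I also need a divisibility chain on the vertices. For this I will use that $L$ is itself a barycentric subdivision: vertices of a simplex of $L$ are barycenters $b_{\tau_0},\dots,b_{\tau_l}$ of a chain $\tau_0\subsetneq\cdots\subsetneq\tau_l$ in $Sd(L_0)$. Their stabilizers satisfy $G_{b_{\tau_l}}\subseteq\cdots\subseteq G_{b_{\tau_0}}$, since the stabilizer of a barycenter equals the stabilizer of its simplex, which fixes it pointwise by (iv). The orders then form a divisibility chain, so $\xi$ is divisible.
\end{itemize}

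\textbf{Step 3: conclude.} By Theorem~\ref{Thm:Main-Isom}, $H_*(K,\partial^{\xi})\cong\mathcal{H}^W_*(X,\lambda)$. Composing with Steps 1 and 2 gives the claim.

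I expect the main obstacle to be the divisibility in Step 2, together with the bookkeeping showing that passage to $Sd^2$ does not change the equivariant homology. Divisibility fails for general (non-subdivided) triangulations. If the stabilizer-chain argument above proves delicate, my fallback is to subdivide $K$ once more: $(Sd(K),Sd(\xi))$ is automatically divisible and weighted-isomorphic to $(|K|,\lambda_\xi)$ by Lemma~\ref{Lem:Barycen-Sub-Iso-Weight}. The cost is that I would then need $H_*(K,\partial^{\xi})\cong H_*(Sd(K),\partial^{Sd(\xi)})$ for non-divisible $\xi$, which the paper does not provide. So the stabilizer-chain argument is the route I will pursue.
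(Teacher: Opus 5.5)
Your proposal is correct and follows the paper's route: Hanson's comparison on $Y$, the chain isomorphism $\pi_\sharp\colon C_*(L;\mathbf{m}_{\Z})\cong C_*(K,\xi)$ for a regular $G$-complex $L$, and then Theorem~\ref{Thm:Main-Isom} on $X=|K|$. Your extra steps fill in points the paper leaves implicit, namely:
\begin{itemize}
\item passing to $L=Sd^2(L_0)$ to obtain regularity;
\item checking $G_y=G_\sigma$, so that $\lambda_{\xi}=\lambda$;
\item using the nested stabilizers $G_{b_{\tau_l}}\subseteq\cdots\subseteq G_{b_{\tau_0}}$ to get divisibility. This also gives $G_\sigma=\bigcap_i G_{b_{\tau_i}}=G_{b_{\tau_l}}$, so each simplex carries its minimal vertex weight.
\end{itemize}
The divisibility check is genuinely needed. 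Theorem~\ref{Thm:Main-Isom} is stated only for divisibly weighted triangulations, and the quotient of an arbitrary regular $G$-complex need not be divisibly weighted.
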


 \vskip 0.4cm

  \section{Examples} \label{Sec:Example}
    
 As we have shown in Section~\ref{Sec:Relation},
  an orbifold is naturally a weighted polyhedron of descending type. It suggests us to consider the
  descending type weighted polyhedra as the generalization of orbifolds. We define the following notion.
   
 \begin{defi}[Pseudo-orbifold]
 A weighted polyhedron $(X,\lambda)$ is called a \emph{pseudo-orbifold} if it is of descending type and
 $ X_{reg}=\{x \in X \mid \lambda(x)=1\}$ is a dense open subset
 of $X$. We call any point in $X_{reg}$ a \emph{regular point} of $(X,\lambda)$, and any point in $X \backslash X_{reg}$ a \emph{singular point}.
\end{defi}

\n

 In the following, we compute the weighted singular (or simplicial) homology of some orbifolds and pseudo-orbifolds in dimension one and two. 
  For brevity, we simply use $\mathcal{H}^{W}_*(\mathcal{M})$
to denote the weighted singular homology of an orbifold or pseudo-orbifold $\mathcal{M}$ below.
 The excision and Mayer-Vietoris sequence of weighted simplicial homology (see~\cite[Theorem 2.3 and Corollary 2.3.1]{Daw90}) are the basic tools for the computation.\n
 
In addition, the definition of weighted simplicial homology makes perfect sense for any $\Delta$-complex (see~\cite{Hatcher02}) with a weight function.  
 So when we compute the weighted simplicial homology (or cohomology) of a weighted polyhedron $(X,\lambda)$ via a divisibly weighted triangulation $(K,\xi)$, 
  we may allow $K$ to be
  a $\Delta$-complex as long as $(K,\xi)$ consists of divisibly weighted simplices.
  We call such kind of $(K,\xi)$ a \emph{divisibly weighted $\Delta$-triangulation} of $(X,\lambda)$.
In many circumstances,  using $\Delta$-triangulations can significantly reduce the number of generators of a weighted simplicial chain complex and hence simplify the computation.

   \subsection{One-dimensional orbifolds and
        pseudo-orbifolds}\ \n
      
    \begin{exam}[One dimensional compact orbifolds]\label{Exam:1-dim-orbifold}
     It is well known that any one-dimensional compact connected orbifold is isomorphic to one of the following cases (see~\cite{Choi12}).     \begin{itemize}
      \item[(a)] $S^1$ or $[0,1]$ without singular points.\n
     \item[(b)] An interval $\mathcal{I}=[v_1,v_2]\subset \R^1$ with only one singular point $v_1$
     where $G_{v_1}= \Z_2$. \n
     
     \item[(c)] An interval 
     $\mathcal{I}'=[v_1,v_2]\subset \R^1$ with two singular points
     $v_1$, $v_2$ where $G_{v_1}=G_{v_2}=\Z_2$.
 
     \end{itemize}
     
     Note that $\mathcal{I}$ is a divisibly weighted $1$-simplex while $\mathcal{I}'$ is not. Using an adapted triangulation of $\mathcal{I}'$, we can easily obtain
    $$ \mathcal{H}^{W}_j(\mathcal{I})
      \cong \begin{cases}
   \Z,  &  \text{if $j=0$}; \\
   0,  &  \text{if $j\geq 1$}.
 \end{cases} \ \  \ \
\mathcal{H}^{W}_j (\mathcal{I}')
      \cong \begin{cases}
   \Z\oplus \Z\slash 2\Z ,  &  \text{if $j=0$}; \\
   0,  &  \text{if $j\geq 1$}.
 \end{cases}  $$
 
    \end{exam}

    There are much more one-dimensional pseudo-orbifolds based on an interval than orbifolds because the weights of
    the singular points in a pseudo-orbifold could be arbitrary positive integers. For example, let $\mathcal{I}_{(k_1,k_2)}$
    denote the pseudo-orbifold based on an interval
    $[v_1,v_2]$ (see Figure~\ref{Fig:Example-Interval-Sing}) where the endpoints $v_1$ and $v_2$ are the only singular points
    whose weights are $k_1$ and $k_2$, respectively.
      \begin{figure}[h]
        \begin{equation*}
        \vcenter{
            \hbox{
                  \mbox{$\includegraphics[width=0.37\textwidth]{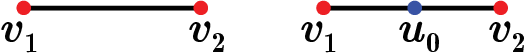}$}
                 }
           }
     \end{equation*}
   \caption{An interval with two singular points at the ends
       } \label{Fig:Example-Interval-Sing}
   \end{figure}  
   
     We can add a vertex $u_0$
    to the interval $[v_1,v_2]$ to obtain a divisibly weighted triangulation of $\mathcal{I}_{(k_1,k_2)}$ with two $1$-simplices $\overline{v_1u_0}$ and $\overline{u_0v_2}$. The corresponding weighted simplicial chain complex can be written as
    \[  \Z^2 \overset{\bordermatrix{%
	       & \overline{v_1u_0}   & \overline{u_0v_2}   \cr
	 v_1    &  -k_1       &  0     \cr
	  u_0    &  1         &  -1    \cr
	 v_2    &  0         & k_2     
    } }{\xlongrightarrow{\qquad \ \qquad\ \qquad\ \quad}} \Z^3. \]
   Then it is easy to compute
     $$ \mathcal{H}^{W}_j (\mathcal{I}_{(k_1,k_2)})
      \cong \begin{cases}
   \Z\oplus \Z\slash \mathrm{gcd}(k_1,k_2)\Z ,  &  \text{if $j=0$}; \\
   0,  &  \text{if $j\geq 1$}.
 \end{cases}  $$
 
Note that we can also consider $\mathcal{I}_{(k_1,k_2)}$ as a weighted simplex of descending type and compute its weighted simplicial homology directly. The answer is the same.
 
 \begin{exam}[Line segment with finitely many singular points in the interior]\label{Exam:Segment}
   \ \n

    Let $I_{(k_1,k_2,\dots,k_n)}$ denote 
    the pseudo-orbifold based on $[a,b]$ with $n$ singular points $v_1,\cdots, v_n$ where $ a<v_1<\cdots <v_n<b$ and
    the weight of $v_i$ is $k_i$, $1\leq i \leq n$ (see Figure~\ref{Fig:Line-Seg-2}).
   Let $(K,\mu)$ be the obvious weighted triangulation of $I_{(k_1,k_2,\dots,k_n)}$ where the vertex set of $K$ is   $\{a,b, v_1,\cdots, v_n\}$ and the $1$-simplices of $K$  are $\overline{av_1},  \overline{v_1v_2}, \cdots,
   \overline{v_{n-1}v_n}, \overline{v_nb}$.     
    The weighted simplicial chain complex
   of $(K,\mu)$ is 
     $$ C_1(K) \overset{\partial^{\mu}}{\longrightarrow} C_0(K) \longrightarrow 0 $$  
    where $\partial^{\mu}$ is represented by the following $(n+2)\times (n+1)$ matrix
    $$A= \bordermatrix{ 
          &  \overline{av_1}   & \overline{v_1v_2}  & \overline{v_2v_3} & \cdots & \overline{v_{n-1}v_n} & \overline{v_nb} \cr
	  a     &   - 1    &  0    &  0   & \cdots & 0 & 0 \cr
	 v_1    &   k_1    & -k_1  &  0   & \cdots & 0 & 0  \cr
	 v_2    &    0     & k_2   & -k_2 & \cdots & 0 & 0  \cr
	 v_3    &    0     &  0    &  k_3 & \cdots & 0 & 0  \cr 
	 \vdots & \vdots   &\vdots &\vdots &       &-k_{n-1} & 0  \cr
	 v_n    &    0     &  0    &  0    & \cdots & k_n &  -k_n \cr
	  b     &   0      &  0    &  0   & \cdots & 0 & 1 }.    $$
	   \begin{figure}[h]
   	\begin{equation*}
   		\vcenter{
   			\hbox{
   				\mbox{$\includegraphics[width=0.52\textwidth]{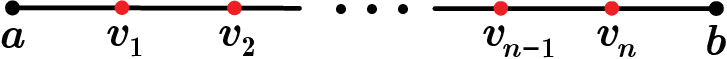}$}
   			}
   		}
   	\end{equation*}
   	\caption{An interval with $n$ singular point in the interior
   	} \label{Fig:Line-Seg-2}
   \end{figure} 
   
  By doing a sequence of elementary column transforms
  on $A$ from the right side to the left side followed by some elementary row transforms, we can simplify $A$ to the matrix $B$ as shown below:
  $$ A \ \longrightarrow \ \begin{pmatrix} 
           - 1    &  0    &  0   & \cdots & 0 & 0 \\
	    0    & -k_1  &  0   & \cdots & 0 & 0  \\
	      0     & 0   & -k_2 & \cdots & 0 & 0  \\
	    0     &  0    &  0 & \cdots & 0 & 0  \\ 
	  \vdots   &\vdots &\vdots &       & -k_{n-1} & 0  \\
	     0     &  0    &  0    & \cdots & 0 &  -k_n \\
	    1      &  1    &  1  & \cdots & 1 & 1 
	    \end{pmatrix} \ \longrightarrow \begin{pmatrix} 
           1    &  0    &  0   & \cdots & 0 & 0 \\
	    0    & k_1  &  0   & \cdots & 0 & 0  \\
	      0     & 0   & k_2 & \cdots & 0 & 0  \\
	    0     &  0    &  0 & \cdots & 0 & 0  \\ 
	  \vdots   &\vdots &\vdots &       & k_{n-1} & 0  \\
	     0     &  0    &  0    & \cdots & 0 &  k_n \\
	    0      &  1    &  1  & \cdots & 1 & 1 
	    \end{pmatrix} = B.$$
  Then it is easy to see that $ \mathcal{H}^{W}_1(I_{(k_1,\dots,k_n)})=0$. Moreover, in the associated weighted simplicial cochain complex (with integral coefficients)
    $$ C^1(K) \overset{\ \delta^{\mu}}{\longleftarrow} C^0(K) \longleftarrow 0,$$ 
    the weighted coboundary $\delta^{\mu}$ is represented by the transpose $A^{t}$ of $A$.  
  Then since $A^t\sim B^t$, we can easily see that $H^0(K,\delta^{\mu})\cong \Z$ and $H^1(K,\delta^{\mu})=\mathrm{coker}(\delta^{\mu})$ is isomorphic to the group $G_{\{k_1,\cdots,k_n\}}$ defined below: \begin{align}
      G_{\{k_1,\cdots, k_n\}} &:= \Z_{k_1}\langle g_1 \rangle \oplus \cdots \oplus \Z_{k_n}\langle g_n \rangle \slash \langle g_1+\ldots + g_n \rangle. 
    \end{align}  
   Finally, it follows from the universal coefficient theorem that
    \begin{equation} \label{Equ:DW-I-n}    
   \mathcal{H}^{W}_i(I_{(k_1,\dots,k_n)}) \cong H_i(K,\partial^{\mu}) \cong
        \begin{cases}
            \Z \oplus G_{\{k_1,\cdots,k_n\}}, & \text{if $i=0$;} \\ 
            0, & \text{otherwise}.
        \end{cases}
    \end{equation}
  
  Note that the order of $G_{\{k_1,\cdots,k_n\}}$ is 
   \begin{equation*} 
    |G_{\{k_1,\cdots,k_n\}}| =\frac{k_1k_2\cdots k_n}{\mathrm{lcm}(k_1,k_2,\cdots,k_n)}.
\end{equation*}
     In particular, $G_{\{k_1,\cdots,k_n\}}$ is trivial if and only if $n=1$ or $k_1,k_2,\cdots, k_n$ are pairwise relatively prime.\n
      
  We want to remark that $(K,\mu)$ may not be divisibly weighted. But we can add a vertex between $v_i$ and $v_{i+1}$ for each $1\leq i \leq n-1$ to obtain a divisibly weighted triangulation $(K',\mu')$ of $I_{(k_1,\dots,k_n)}$.
  It is easy to check that
 $H_*(K',\partial^{\mu'})$ is isomorphic to $H_*(K,\partial^{\mu})$. So the above calculation of $\mathcal{H}^{W}_i(I_{(k_1,\dots,k_n)}) $ using $(K,\mu)$ is valid.        
   \end{exam}

 \begin{exam}[Circle with discrete singular points]\label{Exam:1-dim-pseudo-orbifold}
       
  Let $\mathcal{S}^1_{(k_1,\cdots,k_n)}$ denote the $1$-dimensional pseudo-orbifold which is a circle with $n$ singular points $v_1,\cdots, v_n$ where the
  weight of $v_i$ is $k_i$, $1\leq i \leq n$.   We can think of $\mathcal{S}^1_{(k_1,\dots,k_n)}$ as the union of $I_{(k_1,\dots,k_n)}$ and $I=[a,b]$ (an interval without singular points) where $I_{(k_1,\dots,k_n)} \cap I=\{a,b\}$.
        The Mayer-Vietoris sequence of the weighted simplicial homology for the decomposition $\mathcal{S}^1_{(k_1,\dots,k_n)}=I_{(k_1,\dots,k_n)} \cup I$ gives
 \begin{align*}
 &  \mathcal{H}_0^{W}(\{a, b\}) \xlongrightarrow{i_*} \mathcal{H}_0^{W} \left(I_{\left(k_1, \ldots, k_n\right)}\right) \oplus \mathcal{H}_0^{W}(I) \longrightarrow \mathcal{H}_0^{W} \left(\mathcal{S}_{\left(k_1, \ldots, k_n\right)}^1\right) \longrightarrow 0 \\
 & \quad \ \ \Z\oplus \Z \ \ \ \longmapsto \ \ \ (\Z \oplus G_{\{k_1,\cdots,k_n\}}) \oplus \Z  
   \end{align*}
 Choose the generators of $\mathcal{H}_0^{W}(\{a,b\} )$ to be $[a]$ and $[a]-[b]$. The image of $[a]$ in $\mathcal{H}_0^{W} (I_{(k_1,\cdots,k_n)} )$ and $\mathcal{H}_0^{W} (I )$ are both generators of the free abelian part, while the image of $[a]-[b]$ is $0$. So  $\mathcal{H}_0^{W}(\mathcal{S}_{\left(k_1, \ldots, k_n\right)}^1) \cong \Z \oplus G_{\{k_1,\cdots,k_n\}}$. Then we obtain
    \begin{equation} \label{Equ:AW-S-n}
            \mathcal{H}^W_i(\mathcal{S}^1_{(k_1,\cdots,k_n)})=
            \begin{cases}
                \Z \oplus  G_{\{k_1,\cdots, k_n\}}, & \text{if $i=0$}; \\ 
                \Z, & \text{if $i=1$};\\
                0, & \text{otherwise}.
            \end{cases}
    \end{equation}

  \end{exam}
 
 \n
 
    \subsection{Two-dimensional orbifolds and
        pseudo-orbifolds}\ \n

   \begin{exam}\label{Exam:n-gon-Homology}
   In Figure~\ref{Fig:Example-n-gon}, we have a 
   $2$-dimensional pseudo-orbifold, denoted by $\mathcal{D}^2_{(k_1,\cdots,k_n)}$, which is a $2$-disk $D^2$ with only $n$ singular points $v_1,\cdots, v_n$ on the boundary where the weight of $v_i$ is $k_i$, $i=1,\cdots,n$. Then boundary of $\mathcal{D}^2_{(k_1,\cdots,k_n)}$ is just $\mathcal{S}^1_{(k_1,\cdots,k_n)}$. \n  
   \begin{figure}[h]
        \begin{equation*}
        \vcenter{
            \hbox{
                  \mbox{$\includegraphics[width=0.58\textwidth]{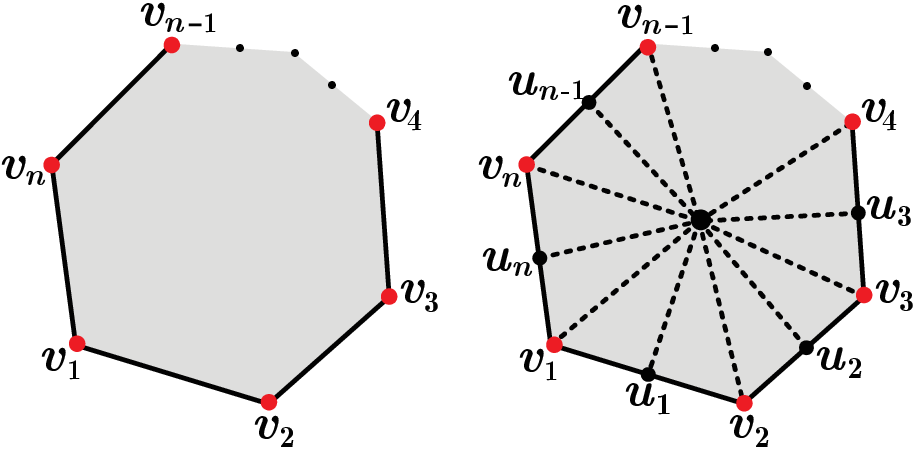}$}
                 }
           }
     \end{equation*}
   \caption{A $2$-disk with $n$ singular points on the boundary
       } \label{Fig:Example-n-gon}
   \end{figure}  
 Let $(K,\xi)$ denote the divisibly weighted triangulation of $\mathcal{D}^2_{(k_1,\cdots,k_n)}$ as shown in Figure~\ref{Fig:Example-n-gon}. A direct calculation shows that 
 \[ \mathcal{H}^{W}_j(\mathcal{D}^2_{(k_1,\cdots,k_n)})  \cong
       \begin{cases}
     \Z\oplus G_{\{k_1,\cdots,k_n\}},  &  \text{if $j=0$}; \\
     0,   &    \text{if $j\geq 1$}.
 \end{cases} 
    \]  
Moreover, a simple analysis of the weighted simplicial chain complexes of  $\mathcal{S}^1_{(k_1,\cdots,k_n)}$ and 
$\mathcal{D}^2_{(k_1,\cdots,k_n)}$ shows that the inclusion map
  $\mathcal{S}^1_{(k_1,\cdots,k_n)} \hookrightarrow 
  \mathcal{D}^2_{(k_1,\cdots,k_n)}$ induces an isomorphism $ \mathcal{H}^{W}_0(\mathcal{S}^1_{(k_1,\cdots,k_n)}) 
  \xlongrightarrow{\cong} \mathcal{H}^{W}_0(\mathcal{D}^2_{(k_1,\cdots,k_n)})$.    
  \end{exam}

  \begin{exam} \label{Exam:Teardrop}
    The \emph{teardrop} orbifold $\mathcal{S}^2_{(k)}$ is the $2$-sphere with only one singular point $v_0$
    (see Figure~\ref{Fig:Example-Teardrop})
whose local group $G_{v_0}=\Z\slash k\Z$ ($k\geq 2$). A chart around $v_0$ consists of an open disk $\tilde{U}\subset \R^n$ and
an action of $\Z\slash k\Z$ on $\tilde{U}$ by rotations.
$\mathcal{S}^2_{(k)}$ is a typical example of ``bad'' orbifold. Using the triangulation adapted to $\mathcal{S}^2_{(k)}$ given in Figure~\ref{Fig:Example-Teardrop}, we can compute 
    \[  
      \mathcal{H}^{W}_j(\mathcal{S}^2_{(k)}) \cong
       \begin{cases}
   \Z,  &  \text{if $j=0$}; \\
     0 ,   &    \text{if $j=1$}; \\
   \Z,  &  \text{if $j = 2$};\\
   0, & \text{if $j \geq 3$}.
 \end{cases} 
    \]    
    \begin{figure}[h]
        \begin{equation*}
        \vcenter{
            \hbox{
                  \mbox{$\includegraphics[width=0.35\textwidth]{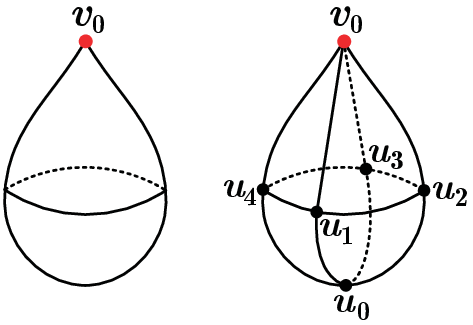}$}
                 }
           }
     \end{equation*}
   \caption{Teardrop orbifold
       } \label{Fig:Example-Teardrop}
   \end{figure} 
   
   \end{exam}
   
  \begin{exam}\label{Exam:Sphere-n-points}
  Let $\mathcal{S}^2_{(k_1,\cdots,k_n)}$ denote a pseudo-orbifold based on a $2$-dimensional sphere $S^2$ with only $n$ singular points $v_1,\cdots, v_n$ where the weight of $v_i$ is $k_i$, $1\leq i \leq n$. 
 We can think of $\mathcal{S}^2_{(k_1,\cdots,k_n)}$ as the gluing of two copies of $\mathcal{D}^2_{(k_1,\cdots,k_n)}$ along their boundaries, that is
 \[  \mathcal{S}^2_{(k_1,\cdots,k_n)} = \mathcal{D}^2_{(k_1,\cdots,k_n)} \cup_{\mathcal{S}^1_{(k_1,\cdots,k_n)}} \mathcal{D}^2_{(k_1,\cdots,k_n)}. \]
 Then by Mayer-Vietoris sequence,
 we can easily compute
    \[ \mathcal{H}^{W}_j(\mathcal{S}^2_{(k_1,\cdots,k_n)})  \cong
       \begin{cases}
   \Z\oplus G_{\{k_1,\cdots,k_n\}},  &  \text{if $j=0$}; \\
  0,   &    \text{if $j=1$}; \\
   \Z,  &  \text{if $j= 2$};\\
   0, & \text{if $j\geq 3$}.
 \end{cases} 
    \]

\end{exam}

  More generally, we can compute the weighted singular (or simplicial) homology of any pseudo-orbifold with finitely many singular points whose underlying space is a closed surface.
      
    \begin{exam} \label{Exam:surface-sing}
   Let $\mathcal{X}_{(k_1,\cdots,k_n)}$ denote a 
   pseudo-orbifold based on a connected surface
   $\Sigma$ with only $n$ singular points $v_1,\cdots, v_n$ where the weight of $v_i$ is $k_i$, $1\leq i \leq n$. By thinking of $\mathcal{X}_{(k_1,\cdots,k_n)}$ as the connected sum of 
  $\mathcal{S}^2_{(k_1,\cdots,k_n)}$ with the surface $\Sigma$ (with no singular points) along a regular circle, we can easily compute the weighted singular homology of $\mathcal{X}_{(k_1,\cdots,k_n)}$ from the preceding results on $\mathcal{S}^2_{(k_1,\cdots,k_n)}$ using 
  Mayer-Vietoris sequence:
   \[  \mathcal{H}^{W}_j(\mathcal{X}_{(k_1,\cdots,k_n)})\cong \begin{cases}
   \Z \oplus G_{\{k_1,\cdots,k_n\}} ,  & \text{if $j=0$}; \\
   H_1(\Sigma),  &  \text{if $j=1$}; \\
     \Z ,  &  \text{if $j=2$}; \\
    0 ,  &  \text{if $j\geq 3$}. \\
 \end{cases} \]
    where $H_1(\Sigma)$ is the ordinary singular homology group of $\Sigma$ in dimension $1$.   
    \end{exam}

    \begin{rem} \label{Rem:Check}
     Using the universal coefficient theorem, we can
     easily check that the weighted singular cohomology $ \mathcal{H}_{W}^*(\mathcal{X}_{(k_1,\cdots,k_n)})$ agrees with the $ws$-cohomology of $\mathcal{X}_{(k_1,\cdots,k_n)}$ computed in~\cite[Theorem 9.1]{TakYok07}.
    \end{rem}

   Our final example is to show the difference between the weighted simplicial cohomology and the ordinary simplicial cohomology of a weighted polyhedron.

 \begin{exam} \label{Exam:DW-cohom-Surface}
   Let $\Sigma_2$ denote a closed orientable surface of genus $2$ which is glued from an octagon as shown in  Figure~\ref{Fig:Example-Sigma-2}. For any integer $k\geq 2$, let $\lambda_k$ be a weight function on $\Sigma_2$ which assigns $1$ to all the
   interior point of the octagon and assigns $k$ to
   those points from the edges on the boundary of the octagon. 
   Then the middle picture in Figure~\ref{Fig:Example-Sigma-2} is a divisibly weighted $\Delta$-triangulation of
   the pseudo-orbifold 
   $(\Sigma_2,\lambda_k)$, from which we can compute the 
   weighted simplicial cohomology of $(\Sigma_2,\lambda_k)$:
   \[ H^p_{W}(\Sigma_2,\lambda_k) \cong 
   H^p(\Sigma_2) \cong \begin{cases}
   \Z,  &  \text{$p=0,2$}; \\
   \Z\oplus\Z\oplus\Z\oplus\Z, & \text{$p=1$}; \\
   0,  &  \text{otherwise}.
 \end{cases} 
  \]

  But the weighted simplicial cohomology ring $\big( H^*_{W}(\Sigma_2,\lambda_k), \Cup \big)$ is not isomorphic as a graded ring to the ordinary simplicial cohomology ring
  $\big( H^*(\Sigma_2), \cup \big)$ of $\Sigma_2$. Indeed, let $\phi_1,\phi_2$, $\psi_1$ and $\psi_2$ denote the generators of
  $H^1_{W}(\Sigma_2,\lambda_k)$ that are dual to
  $a_1,a_2, b_1$ and $b_2$, respectively. More specifically, $\phi_i$ (or $\psi_i$) has the value $1$ 
  on $a_i$ (or $b_i$) and the value $k$ on the two edges
   meeting the dotted arc that intersects $a_i$ (or $b_i$) (see the right picture in Figure~\ref{Fig:Example-Sigma-2}), and has the value $0$ on all other edges.   Let $\zeta$ denote the generator of $H^2_{W}(\Sigma_2,\lambda_k)$ which has the value $1$ on all the $2$-simplices of $\Sigma_2$. Then we can directly compute from the $\Delta$-triangulation and the definition of $\Cup$ that the only nontrivial relations among the generators of $H^1_{W}(\Sigma_2,\lambda_k)$ and $H^2_{W}(\Sigma_2,\lambda_k)$ are:
  \[ \phi_1\Cup \psi_1 = \phi_2\Cup \psi_2 = k^2 \cdot \zeta. \] 
 So $\zeta$ is a multiplicative
  generator of $\big( H^*_{W}(\Sigma_2,\lambda_k), \Cup \big)$ since $k\geq 2$.
  Therefore, there is no graded ring isomorphism between $\big( H^*_{W}(\Sigma_2,\lambda_k), \Cup \big)$
 and $\big( H^*(\Sigma_2), \cup \big)$.      
 \end{exam}
 
 \begin{figure}[h]
        \begin{equation*}
        \vcenter{
            \hbox{
                  \mbox{$\includegraphics[width=0.9\textwidth]{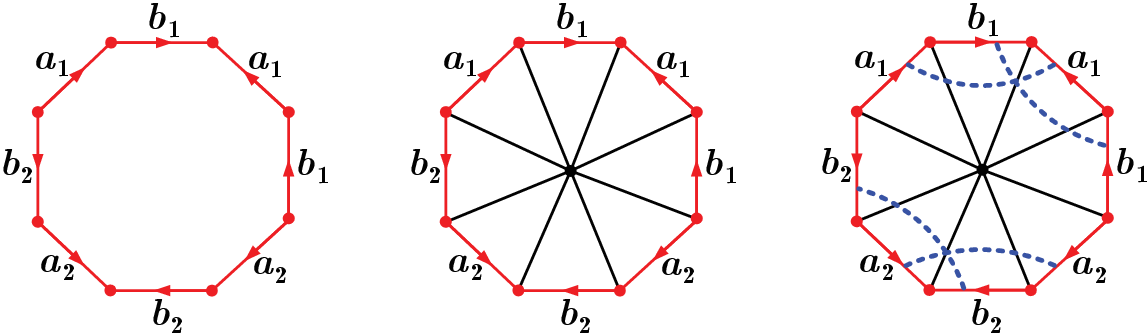}$}
                 }
           }
     \end{equation*}
   \caption{A pseudo-orbifold based on a genus $2$ closed orientable surface
       } \label{Fig:Example-Sigma-2}
   \end{figure}       
    
   \vskip .4cm

\section*{Appendix}

\textbf{\emph{Proof of Lemma~\ref{Lem:Inclusion-Equiv}}:}
 First, assume that $X$ is a convex set in an Euclidean space.
 let $L\mathcal{W}_*(X,\lambda)$ denote the subgroup of 
 \emph{linear chains} in $\mathcal{W}_*(X,\lambda)$, which is generated by all the weighted singular simplices $\theta: (|\Delta^{n}|,\lambda_{\xi}) \rightarrow (X,\lambda)$ with $\theta: |\Delta^n|\rightarrow X$ being a linear map. For any $n\geq 1$, let the vertex set of 
 $\Delta^n$ be $\{v_0,\cdots, v_n\}$.
 For a generator $\theta\in L\mathcal{W}_n(X,\lambda)$, \n
 \begin{itemize}
  \item For any point $p\in X$, we can define a linear map
 $\theta^p: |\Delta^{n+1}| \rightarrow X$ by 
   \begin{equation}\label{Equ:Cone-theta}
                     \theta^p (t_0,t_1,\ldots, t_{n+1}) =
                    \begin{cases}
                       \, p & \text{if\ } t_{0}=1; \\
                       t_{0} p + (1-t_{0})\, \theta (\frac{t_1}{1-t_{0}},\cdots,
                       \frac{t_{n+1}}{1-t_{0}}) & \text{if\ } t_{0}\neq 1.
                    \end{cases}
     \end{equation}
                where $(t_0,t_1,\ldots, t_{n+1})$ is the
                barycentric coordinates of $|\Delta^{n+1}|$. 
                Obviously, $\theta = \theta^p|_{\Delta^n}$ where
                $\Delta^n$ is identified with the face $[v_1,\cdots, v_{n+1}]$ of $\Delta^{n+1}$.\n
                
    \item  Let $b_{\Delta^n}$ be the barycenter of $\Delta^n$. Then the divisible weight $\xi$ on $\Delta^n$ canonically determines a divisible weight $\xi^*$ on $\Delta^{n+1}$ by
     \begin{equation} \label{Equ:xi-star}
     \qquad \xi^*(v_0)=
    \xi(\Delta^n)=Sd(\xi)(b_{\Delta^n}), \ \ 
    \xi^*|_{\Delta^n}=\xi.
    \end{equation}

  \item Let $p_{\theta}=\theta(b_{\Delta^n})$. Then $\theta^{p_{\theta}} :  (|\Delta^{n+1}|,\lambda_{\xi^*}) \rightarrow (X,\lambda)$ defines a linear weighted singular $(n+1)$-simplex in
  $L\mathcal{W}_{n+1}(X,\lambda)$.
   We also denote  $\theta^{p_{\theta}}$ by $p_{\theta}\cdot\theta $ and call it
  the \emph{cone} of $\theta$ with $p_{\theta}$. \n

 \item Generally, if a linear weighted singular $n$-simplex 
 $\theta: (|\Delta^{n}|,\lambda_{\xi}) \rightarrow (X,\lambda)$ in $L\mathcal{W}_n(X,\lambda) $ can be extended to $\theta': (|\Delta^{n+1}|, \lambda_{\xi'}) \rightarrow
 (X,\lambda)$ in $
 L\mathcal{W}_{n+1}(X,\lambda)$ with $\xi'(v_0)=\lambda(\theta'(v_0))$ (i.e. $\theta'$ preserves the weight of $v_0$), then we write
\begin{equation} \label{Equ:Cone-p}
 \theta' =p\cdot \theta, \ \text{where}\ p=\theta'(v_0).
 \end{equation}
We call $\theta'$ the \emph{cone} of $\theta$ with $b$.
 \end{itemize}
 \n
 
 Moreover, we consider the cone over a point $p\in X$ as a linear operation on the weighted singular chains of $(X,\lambda)$ whenever it makes sense. Of course, for a given linear weighted singular simplex $\theta$, not all points $p\in X$ can we take a cone with $\theta$. But as we have seen, $p_{\theta}=\theta(b_{\Delta^n})$ is always a valid cone point for $\theta$.
 \n
 \begin{itemize}

 \item For any weighted singular simplex $\theta: (|\Delta^{n}|,\lambda_{\xi}) \rightarrow (X,\lambda)$ in $L\mathcal{W}_n(X,\lambda) $,
 \begin{equation} \label{Equ:bound-lambda-star}
   \partial^{\lambda} ( p_{\theta}\cdot \theta )  = 
   \theta  - p_{\theta}\cdot \partial \theta,
  \end{equation}
  where $\partial: L\mathcal{W}_*(X,\lambda) \rightarrow
  L\mathcal{W}_*(X,\lambda) $ is defined by
\begin{equation} \label{Equ:bound-normal}
 \partial \theta:= \sum^n_{j=0}(-1)^j \cdot\theta|_{\partial_j\Delta^n}. 
 \end{equation}
 The equality in~\eqref{Equ:bound-lambda-star} follows from the fact that $\Delta^{n+1}$ and all its $n$-faces have the same weight value under $\xi^*$.  
Note that $\partial$ is also a valid boundary map on $L\mathcal{W}_*(X,\lambda)$
 which
 is different from $\partial^{\lambda}$ in~\eqref{Equ:Bd-Sing}.\n
  
 \end{itemize}
 
  Similarly to the ordinary chain map induced by the barycentric subdivision in~\eqref{Equ:Sd-simple}, we can inductively define a chain map $Sd_{\#}$ on $L\mathcal{W}_*(X,\lambda)$ as follows:
  for a linear weighted singular $n$-simplex $\theta: (|\Delta^{n}|,\lambda_{\xi}) \rightarrow (X,\lambda)$, let
 \begin{align} \label{Equ:Sd-Weight-Sing}
   Sd_{\#} (\theta)= \begin{cases}
  p_{\theta}\cdot Sd_{\#} (\partial\theta),  &  \text{if 
  $n\geq 1$}; \\
 \theta,  &  \text{if $n=0$}.
 \end{cases} 
    \end{align}

\n
  Next, we inductively construct
 a chain homotopy 
 $$T=\{T_n : L\mathcal{W}_n(X,\lambda) \rightarrow L\mathcal{W}_{n+1}(X,\lambda)\}$$ between $Sd_{\#}$ and the identity on $L\mathcal{W}_*(X,\lambda)$ by: $T_{-1}=0$ and
  \begin{align} \label{Equ:Def-Tn}
   T_n(\theta) & :=  p_{\theta}\cdot ( \theta- T_{n-1}(\partial\theta ) ).
   \end{align}
     Note that if we forget the weights, our constructions of $Sd_{\#}$ in~\eqref{Equ:Sd-simple} and $T$ in~\eqref{Equ:Def-Tn} are equivalent to the constructions 
   in~\cite[Proposition 2.21]{Hatcher02} for the ordinary singular chain complex. So by the same argument as in~\cite{Hatcher02}, we have    
   \begin{equation} \label{Equ:T-Relation-1}
    \partial \circ T + T \circ \partial = \mathrm{id} - Sd_{\#}.
    \end{equation}   
  But here we need to prove
   $\partial^{\lambda} \circ T + T \circ \partial^{\lambda} = \mathrm{id} - Sd_{\#}$ on
   $(L\mathcal{W}_*(X,\lambda),\partial^{\lambda})$. 
   \begin{align*}
    & \partial^{\lambda} \circ T_n(\theta)  = \partial^{\lambda} (  p_{\theta}\cdot  \theta )- \partial^{\lambda} ( p_{\theta}\cdot T_{n-1}(\partial\theta )) \\
      \overset{\eqref{Equ:bound-lambda-star}}{=}&  \theta  - p_{\theta}\cdot \partial  \theta - \partial^{\lambda} \Big(p_{\theta} \cdot T_{n-1}\big( \sum^n_{j=0}(-1)^j \theta|_{\partial_j\Delta^n} \big) \Big) \\
       \overset{\quad\ }{ = } & \theta  - p_{\theta}\cdot \partial\theta - \partial^{\lambda} \Big(p_{\theta} \cdot \big( \sum^n_{j=0}(-1)^j T_{n-1} (\theta|_{\partial_j\Delta^n}) \big) \Big)\\
   \overset{\ \divideontimes\ }{ = }   & 
   \theta  - p_{\theta}\cdot \partial\theta  - \sum^n_{j=0}(-1)^j c^{\xi}_j(\Delta^n)\cdot T_{n-1} (\theta|_{\partial_j\Delta^n}) + p_{\theta}\cdot\Big( \partial \big( \sum^n_{j=0}(-1)^j T_{n-1} (\theta|_{\partial_j\Delta^n}) \big)  \Big) \\
   \overset{\quad\ }{ = }    &   \theta  - p_{\theta}\cdot \partial  \theta  -  T_{n-1} ( \partial^{\lambda} \theta ) + p_{\theta}\cdot \big( \partial \circ T_{n-1}
    (\partial\theta ) \big)  \\    
     \overset{\eqref{Equ:T-Relation-1}}{=} &\,  \theta  - p_{\theta}\cdot \partial  \theta  -  T_{n-1} ( \partial^{\lambda} \theta )
      + p_{\theta}\cdot \big( 
     \partial  \theta - Sd_{\#} ( \partial\theta )  -
       T_{n-2} \circ \partial (\partial\theta)  \big) \\
     \overset{\eqref{Equ:Sd-Weight-Sing}}{=} & \,  \theta  - Sd_{\#}(\theta) -  T_{n-1}( \partial^{\lambda} \theta).
     \end{align*}
          The coefficients $c^{\xi}_j(\Delta^n)$ (see~\eqref{Equ:Boundary-Unified}) appear in $\overset{\divideontimes}{=}$ is because any $n$-simplex $\tau$ in $T_{n-1}(\partial_j\Delta^n)$ is the cone of $\partial_j\Delta^n$ with $\theta(b_{\partial_j\Delta^n})$ for some $0\leq j\leq n$, whose weight is $\xi(\partial_j\Delta^n)$; but the weight of $p_{\theta}*\tau$ is $\xi(\Delta^n)$ (see~\eqref{Equ:xi-star}).
The above identity means that $T$ is a chain homotopy between $Sd_{\#}$ and the identity on $(L\mathcal{W}_*(X,\lambda),\partial^{\lambda})$. 
 \n
 For a general weighted polyhedron $(X,\lambda)$ and an arbitrary weighted singular $n$-simplex $\theta: (|\Delta^{n}|,\lambda_{\xi}) \rightarrow (X,\lambda)$,  define the barycentric subdivision of $\theta$ by:
  $$  Sd_{\#}(\theta) := \theta_{\#} \big(Sd_{\#}(id_{(|\Delta^n|,\lambda_{\xi})}) \big),$$ 
  where $\theta_{\#}$ is the chain map induced by $\theta$
  (see~\eqref{Equ:f-chain-1}). Moreover, we can similarly define a chain homotopy $T$ between $Sd_{\#}$ and the identity on $ \mathcal{W}_*(X,\lambda)$ by
  $$ T_n(\theta) := \theta_{\#} \big(T_n(id_{(|\Delta^n|,\lambda_{\xi})})\big).$$

Furthermore, using iterated barycentric subdivisions we can construct a chain homotopy inverse for the inclusion
$\mathcal{W}_*^{\mathcal{U}}(X,\lambda) \hookrightarrow 
\mathcal{W}_*(X,\lambda)$.
The argument is the same as~\cite[Proposition 2.21]{Hatcher02}, hence omitted. The lemma is proved.
 \qed
      
      \vskip .8cm
      
   \section*{Acknowledgment}
  During the process of writing this paper, 
Lisu Wu is partially supported by  National Natural Science Foundation of China (Grant No.\,12201359) and  Natural Science Foundation of Shandong Province, China (Grant No.\,ZR2022QA028), and Li Yu is partially supported by
  National Natural Science Foundation of China (Grant No.\,11871266).

\end{document}